\documentclass[12pt]{amsart}

   \topmargin = 0 in
   \headsep = .1 in
   \textwidth = 6.5 in
   \textheight = 8.9 in
   \baselineskip = .16666 in
   \oddsidemargin = 0 in
   \evensidemargin = 0 in

\usepackage{amsthm}

\usepackage{amssymb}
\usepackage{verbatim}
\usepackage[curve,matrix,arrow]{xy}
\usepackage{amsmath,amsfonts}
\usepackage{graphicx}
\usepackage{epsf}

\newcommand{\thmref}[1]{Theorem~\ref{#1}}
\newcommand{\propref}[1]{Proposition~\ref{#1}}
\newcommand{\lemref}[1]{Lemma~\ref{#1}}
\newcommand{\eqnref}[1]{Equation~(\ref{#1})}
\newcommand{\remref}[1]{Remark~\ref{#1}}
\newcommand{\corref}[1]{Corollary~\ref{#1}}

\newcommand{\figref}[1]{Figure~\ref{#1}}

\newcommand{\conjref}[1]{Conjecture~\ref{#1}}

  {\end{list}}

\def\li{L_{i}}
\def\ri{R_{i}}
\def\ddx{{\frac{d}{dx}}}
\def\oga{{\overline{\ga}}}
\def\tga{{\widetilde{\ga}}}

\def\RR{{\mathbb R}}

\newtheorem{theorem}{Theorem}[section]

\newtheorem{corollary}[theorem]{Corollary}
\newtheorem{conjecture}[theorem]{Conjecture}
\newtheorem{lemma}[theorem]{Lemma}
\newtheorem{proposition}[theorem]{Proposition}

\newtheorem{remark}[theorem]{Remark}

\theoremstyle{definition}

\theoremstyle{notation}
\newtheorem*{notation}{Notation}

\newcommand{\jj}[3]{j_{#1}(#2,#3)}
\newcommand{\hj}[3]{\hat{j}_{#1}(#2,#3)}

\newcommand{\ga}{\Gamma}

\newcommand{\tg}{\tau(\Gamma)}
\newcommand{\ta}[1]{\tau(#1)}

\newcommand{\ee}[1]{E(#1)}
\newcommand{\vv}[1]{V(#1)}
\newcommand{\va}{\upsilon}

\newcommand{\pp}{p_{i}}

\newcommand{\qq}{q_{i}}

\def\can{{\mathop{\rm can}}}


\def\<{\langle }
\def\>{\rangle }
\newcommand{\secref}[1]{\S\ref{#1}}

\def\elg{\ell (\ga)}

\begin{document}

\title[The tau constant and the edge connectivity]
{The Tau Constant And The Edge Connectivity of A Metrized Graph}

\author{Zubeyir Cinkir}
\address{Zubeyir Cinkir\\
Department of Mathematics\\
University of Georgia\\
Athens, Georgia 30602\\
USA}
\email{cinkir@math.uga.edu}


\keywords{Metrized graph, the tau constant of metrized graphs, the
voltage function, the resistance function, contraction identities,
deletion identities, contraction-deletion identities, edge connectivity and the tau constant}
\thanks{I would like to thank Dr. Robert Rumely for his continued support and help.}

\begin{abstract}
The tau constant is an important invariant of
a metrized graph, and it has applications in arithmetic properties of curves.
We show how the tau constant of a metrized graph changes under
successive edge contractions and deletions. We discover identities which we call  ``contraction", ``deletion", and
``contraction-deletion" identities on a metrized
graph. By establishing a lower bound for the tau constant in terms of the edge
connectivity, we prove that Baker and Rumely's lower bound conjecture on the tau constant holds for metrized graphs
with edge connectivity $5$ or more. We show that proving this conjecture for $3$-regular graphs is enough to prove it for all graphs.
\end{abstract}

\maketitle

\section{Introduction}\label{sec introduction}

Metrized graphs, which are graphs equipped with a distance function
on their edges, appear in many places in arithmetic geometry.
Metrized graphs and their invariants are studied in the articles
\cite{Zh1}, \cite{Zh2}, \cite{C1}, \cite{C2}, \cite{C3}, \cite{C5}, and \cite{C6}.
Metrized graphs which arise as dual graphs of special fibers of curves, and Arakelov
Green's functions $g_{\mu}(x,y)$ on metrized graphs, play an
important role in both articles \cite{CR} and \cite{Zh1} to study arithmetic
properties of curves. T. Chinburg and R. Rumely \cite{CR}
introduced a canonical measure $\mu_{can}$ of total mass $1$ on a
metrized graph $\ga$.
The diagonal values $g_{\mu_{can}}(x,x)$ are constant on
$\ga$. M. Baker and Rumely called this constant ``the tau constant''
of a metrized graph $\ga$, and denoted it by $\tg$. They posed the following conjecture
concerning lower bound of $\tg$.
\begin{conjecture}\cite{BRh} \label{TauBound}
There is a universal constant $C>0$
such that for all metrized graphs $\Gamma$,
$\tau(\Gamma) \ \geq \ C \cdot \ell(\Gamma)\,$
where $\ell(\Gamma)$ is the total length of $\ga$.
\end{conjecture}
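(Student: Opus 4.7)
The plan is to attack the conjecture through an edge-by-edge analysis of $\tg$, using identities that describe how $\tau$ transforms under contraction and deletion of a single edge. The starting point is the representation of $\tg$ as a weighted sum of integrals of the resistance function over the edges of $\ga$. For each edge $e_i$ of length $\li$ with endpoints $p_i, q_i$, I would first derive a contraction identity of the form $\tg = \ta{\ga / e_i} + F_i(\li, \lpqi)$ and a deletion identity of similar shape, together with a linear combination of the two -- the contraction-deletion identity -- that expresses $\tg$ purely through invariants of $\ga - e_i$ and of $\li$. These identities would be the workhorse of the remaining argument.

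Next, I would translate these identities into per-edge lower bounds on $\tg$. Each $\li$ is positive and the remainder terms involve only $\li$ and the effective resistance $\lpqi$ between the endpoints of $e_i$, so a uniform lower bound follows provided $\lpqi$ is controlled in terms of $\li$ and the surrounding edge lengths. This is where edge connectivity enters: if $\ga$ has edge connectivity $\geq k$, then removing $e_i$ still leaves $k-1$ edge-disjoint paths between its endpoints, and Rayleigh monotonicity bounds $\lpqi$ above by a simple function of the lengths of those paths. Calibrating these estimates should yield $\tg \geq C_k \cdot \elg$ once $k$ is large enough; the threshold anticipated by the abstract is $k = 5$.

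To bridge to the general case I would perform a reduction to $3$-regular metrized graphs. Any vertex of valence $v \geq 4$ can be resolved into $v - 2$ trivalent vertices by inserting auxiliary edges, and by letting these auxiliary edges have vanishing length both $\tg$ and $\elg$ are preserved in the limit, yielding a $3$-regular graph with the same ratio $\tg / \elg$. Combined with additivity-type identities over bridges and disjoint unions that should fall out of the deletion identity, this reduction should show that the general conjecture follows from its restriction to $3$-regular graphs, matching the second assertion in the abstract.

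The main obstacle I anticipate is the $3$-regular case itself, which this approach does not resolve. A $3$-regular metrized graph can contain bridges and $2$-edge cuts, across which the effective resistance $\lpqi$ is essentially $\li$ plus a potentially large contribution from the rest of $\ga$; in that regime the per-edge estimate extracted from the contraction-deletion identity degenerates and the Rayleigh-type resistance bounds give no useful constant. Overcoming this appears to require either a substantially different lower bound tailored to low-connectivity configurations, or a global argument that offsets weakness at bridges against compensating contributions elsewhere in $\ga$, neither of which follows from the outline above.
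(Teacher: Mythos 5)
The statement you are asked to prove is \conjref{TauBound}, which in the paper is not proven at all: it is stated as an open conjecture, and the paper only establishes partial results toward it, namely \thmref{thm edgecon2} (the bound with $C=\tfrac{1}{108}$ when $\Lambda(\ga)\geq 6$ and $C=\tfrac{1}{300}$ when $\Lambda(\ga)=5$) and \thmref{thmconjforcubics} (the reduction of the general case to cubic graphs). Your proposal reproduces essentially this same program --- contraction, deletion, and contraction--deletion identities, resistance estimates driven by edge connectivity and Rayleigh monotonicity, and a valence-splitting reduction to $3$-regular graphs --- and you candidly concede in your final paragraph that it does not close the argument. So the verdict is that there is a genuine gap, and it is exactly the one you name: the two partial results do not mesh. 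A cubic graph satisfies $\Lambda(\ga)\leq\delta(\ga)=3$ by \remref{edgecon-mindegree-vertexcon}, so the reduction to $3$-regular graphs lands you precisely in the low-connectivity regime where the per-edge resistance estimates degenerate (the intersection point of the line $x=(\Lambda(\ga)-1)y$ with the parabola in the proof of \thmref{thm edgecon2} only yields a positive, $v$-independent constant once $\Lambda(\ga)\geq 5$). Neither you nor the paper has an argument for $\Lambda(\ga)\in\{3,4\}$, and without it the universal constant $C$ does not exist by this route.

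One further caution on your reduction step: inserting auxiliary edges of vanishing length does not preserve $\tau$ exactly; what the paper actually proves (via \lemref{lemcontract1} and the nonnegativity of $A_{p,q,\ga}$) is the one-sided inequality $\ta{\beta}\leq\ta{\ga}+\varepsilon$ for the resulting cubic graph $\beta$, which is the correct direction for transferring a lower bound from cubic graphs back to $\ga$ but is weaker than the equality of ratios you assert. This does not damage the reduction, but as stated your limit argument overclaims. In sum: your outline is a faithful reconstruction of the paper's strategy for its partial results, but it is not, and cannot by these means alone become, a proof of the conjecture itself.
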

We call \conjref{TauBound} Baker and Rumely's
lower bound conjecture.

In summer 2003 at UGA,  an REU group lead by Baker and Rumely
studied properties of the tau constant and the lower bound conjecture. Baker and Rumely
\cite{BRh} introduced a measure valued Laplacian operator $\Delta$ which
extends Laplacian operators studied earlier in the articles \cite{CR} and
\cite{Zh1}. This Laplacian operator combines the ``discrete''
Laplacian on a finite graph and the ``continuous'' Laplacian
$-f''(x)dx$ on $\RR$. Baker and Rumely \cite{BRh} studied
harmonic analysis on metrized graphs. In terms of spectral theory,
the tau constant is the trace of the inverse operator of $\Delta$
when $\ga$ has total length $1$.

In this paper, we prove that \conjref{TauBound} holds with $C=\frac{1}{108}$ for a graph $\ga$ with edge connectivity
at least $6$, and with $C=\frac{1}{300}$ for a graph $\ga$ with edge connectivity $5$. The proof involves establishing
a set of identities, which we call  ``contraction'', ``deletion'', and ``contraction-deletion'' identities on a metrized graph.
By using these identities, we show how the tau constant changes after successive edge deletions and contractions. In particular,
when we consider successive edge contractions until we are left with only two vertices, we use our previous results \cite{C2} about the tau constant to obtain a set of inequalities between the terms adding up to the tau constant. In this way, we transform the tau lower bound problem into a linear optimization problem. Finally, we obtain a lower bound to the tau constant
$\tg$ of a metrized graph $\ga$ in terms of the edge connectivity and $\ell(\Gamma)$ when the edge connectivity is at least $5$.
The results here not only extend those obtained in \cite[Sections 3.6, 3.7, 3.9, 3.10 and 3.12]{C1}
but we obtained in a more coherent and systematic manner.

Applications of these results to arithmetic of curves and specifically to Bogomolov Conjecture can be found in the article \cite{C5}.

Note that there is a $1-1$ correspondence between equivalence classes of finite connected weighted graphs, metrized graphs, and resistive electric circuits.
If an edge $e_i$ of a metrized graph has length $\li$, then the resistance of $e_i$ is $\li$
in the corresponding resistive electric circuit, and the weight of $e_i$ is $\frac{1}{\li}$ in the corresponding
weighted graph. Therefore, the identities we show in this paper have equivalent forms for weighted graphs.

\section{Metrized graphs and their tau constants}\label{sec metrizedgraph}

In this section, we recall a few facts about metrized graphs,
the canonical measure on a metrized graph $\ga$,
and the tau constant of $\ga$.

A metrized graph $\ga$ is a finite connected graph
equipped with a distinguished parametrization of each of its edges.
A metrized graph $\ga$ can have multiple edges and self-loops. For any given $p \in \ga$,
the number of directions emanating from $p$ will be called the \textit{valence} of $p$, and will be denoted by $\va(p)$. By definition, there can be only finitely many $p \in \ga$ with $\va(p)\not=2$.

For a metrized graph $\ga$, we will denote a vertex set for $\ga$ by $\vv{\ga}$.
We require that $\vv{\ga}$ be finite and non-empty and that $p \in \vv{\ga}$ for each $p \in \ga$ if $\va(p)\not=2$. For a given metrized graph $\ga$, it is possible to enlarge the
vertex set $\vv{\ga}$ by considering additional valence $2$ points as vertices.

For a given metrized graph $\ga$ with vertex set $\vv{\ga}$, the set of edges of $\ga$ is the set of closed line segments with end points in $\vv{\ga}$. We will denote the set of edges of $\ga$ by $\ee{\ga}$. However, if
$e_i$ is an edge, by $\ga-e_i$ we mean the graph obtained by deleting the {\em interior} of $e_i$.

Let $v:=\# (\vv{\ga})$ and $e:=\# (\ee{\ga})$. We define the genus $\ga$ to be the first Betti number $g:=e-v+1$ of the graph $\ga$. Note that the genus is a topological invariant of $\ga$. In particular, it is independent of the choice of the vertex set $\vv{\ga}$.
Since $\ga$ is connected, $g(\ga)$ coincides with the cyclotomic number of $\ga$ in combinatorial graph theory. We will simply use $g$ to show $g(\ga)$ when there is no danger of confusion.

We denote the length of an edge $e_i \in \ee{\ga}$ by $\li$. The total length of $\ga$, which will be denoted by $\elg$, is given by $\elg=\sum_{i=1}^e\li$.

Let $\ga$ be a metrized graph. If we scale each edge
in $\ga$ by multiplying its length by $\frac{1}{\ell(\ga)}$, we
obtain a new graph which is called normalization of $\ga$ and
denoted by $\ga^{N}$. Note that $\ga$ and $\ga^N$ have the same topology, and $\ell(\ga^N)=1$.
If $\ga=\ga^N$, we call $\ga$ be a normalized graph.

A metrized graph $\ga$ is called $n$-regular if it has a vertex set $\vv{\ga}$ such that
$\va(p)=n$ for all vertices $p \in \vv{\ga}$.


We will denote the minimum of the valences of vertices in $\vv{\ga}$ by $\delta(\ga)$.
The minimum number of edges whose deletion
disconnects $\ga$ is called the ``edge connectivity" of $\ga$ and
denoted by $\Lambda(\ga)$.
The minimum number of vertices whose deletion
disconnects $\ga$ is called the ``vertex connectivity" of $\ga$ and
denoted by $\kappa(\ga)$.

In the article \cite{CR}, a kernel $j_{z}(x,y)$ giving a
fundamental solution of the Laplacian is defined and studied as a
function of $x, y, z \in \Gamma$. For fixed $z$ and $y$ it has the
following physical interpretation: When $\Gamma$ is viewed as a
resistive electric circuit with terminals at $z$ and $y$, with the
resistance in each edge given by its length, then $j_{z}(x,y)$ is
the voltage difference between $x$ and $z$, when unit current enters
at $y$ and exits at $z$ (with reference voltage 0 at $z$).

For any $x$, $y$, $z$ in $\ga$, the voltage function $j_x(y,z)$ on
$\ga$ is a symmetric function in $y$ and $z$, and it satisfies
$j_x(x,z)=0$ and $j_x(y,y)=r(x,y)$, where $r(x,y)$ is the resistance
function on $\ga$. For each vertex set $\vv{\ga}$, $j_{z}(x,y)$ is
continuous on $\ga$ as a function of $3$ variables.
As the physical interpretation suggests, $j_x(y,z) \geq 0$ for all $x$, $y$, $z$ in $\ga$.
For proofs of these facts, see the articles \cite{CR}, \cite[sec 1.5 and sec 6]{BRh}, and \cite[Appendix]{Zh1}.
The voltage function $j_{z}(x,y)$ and the resistance function $r(x,y)$ on a metrized graph
were also studied in the articles \cite{BF} and \cite{C2}.

For any real-valued, signed Borel measure $\mu$ on $\Gamma$ with
$\mu(\Gamma)=1$ and $|\mu|(\Gamma) < \infty$, define the function
$j_{\mu}(x,y) \ = \ \int_{\Gamma} j_{\zeta}(x,y) \, d\mu({\zeta}).$
Clearly $j_{\mu}(x,y)$ is symmetric, and is jointly continuous in
$x$ and $y$. Chinburg and Rumely \cite{CR} discovered that there is a unique measure $\mu=\mu_{can}$ with above properties
such that $j_{\mu}(x,x)$ is constant on $\ga$. The measure $\mu_\can$ is called the
\textit{canonical measure}.
Baker and Rumely \cite{BRh} called the constant $\frac{1}{2}j_{\mu}(x,x)$ the \textit{tau constant} of $\ga$
and denoted it by $\tg$.

The following lemma gives another description of the tau constant. In particular, it implies that the tau constant is positive.
\begin{lemma}\cite[Lemma 14.4]{BRh}\label{lemtauformula}
For any fixed $y$ in $\ga$,
$\tg =\frac{1}{4}\int_{\ga}\big(\frac{\partial}{\partial x} r(x,y) \big)^2dx$.
\end{lemma}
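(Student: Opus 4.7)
The plan is to combine a Green's-identity type formula on $\ga$ with an explicit computation of the measure-valued Laplacian $\Delta_{x} r(x,y)$, viewed as a measure in $x$ with $y$ fixed. First I would establish the Green's identity: for any $f$ continuous on $\ga$, smooth on the interior of each edge, and with well-defined directional derivatives at every vertex, integration by parts edge-by-edge with careful bookkeeping of the boundary contributions at the vertices gives
$$\int_{\ga} \left(\frac{\partial f(x)}{\partial x}\right)^2 dx \; = \; \int_{\ga} f(x)\, d\Delta f(x),$$
where $\Delta$ is Baker and Rumely's measure-valued Laplacian on $\ga$. I would apply this to $f(x) = r(x,y)$, the resistance function viewed as a function of $x$.

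Next I would compute the measure $\Delta_{x} r(x,y)$ explicitly. Starting from the standard decomposition $\jj{z}{x}{y} = \tfrac{1}{2}\bigl(r(x,z) + r(y,z) - r(x,y)\bigr)$, which is verified by checking the characterizing conditions $j_z(z,y) = 0$, $j_z(y,y) = r(y,z)$, $j_z(x,x) = r(x,z)$ together with harmonicity of $j_z$ away from $y$ and $z$, I would integrate against $d\mucan(\zeta)$. Using the defining property of the canonical measure that $\int_{\ga} r(x,\zeta)\,d\mucan(\zeta) = j_{\mucan}(x,x) = 2\tg$ is constant in $x$, this yields
$$j_{\mucan}(x,y) \; = \; 2\,\tg \; - \; \tfrac{1}{2}\,r(x,y).$$
On the other hand, the relation $\Delta_{x}\jj{z}{x}{y} = \delta_y - \delta_z$ (for fixed $y,z$) integrates against $\mucan$ in $\zeta$ to give $\Delta_{x} j_{\mucan}(x,y) = \delta_y - \mucan$. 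Equating the two expressions produces the key identity
$$\Delta_{x}\, r(x,y) \; = \; 2\,\mucan \; - \; 2\,\delta_y.$$

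Substituting this into the Green's identity above gives
$$\int_{\ga} \left(\frac{\partial r(x,y)}{\partial x}\right)^2 dx \; = \; 2 \int_{\ga} r(x,y)\,d\mucan(x) \; - \; 2\,r(y,y) \; = \; 4\,\tg,$$
using $r(y,y) = 0$ together with $\int r(x,y)\,d\mucan(x) = 2\tg$ (which follows from the symmetry of $r$ and the definition of $\tg$). Dividing by $4$ yields the lemma. The heart of the argument, and the main obstacle I anticipate, is the identity $\Delta_{x} r(x,y) = 2(\mucan - \delta_y)$: one has to justify interchanging the measure-valued Laplacian with the integral $\int\cdot\,d\mucan(\zeta)$, and to track the signs of the vertex boundary contributions carefully in the Green's identity. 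The remaining steps follow directly from the definitions recorded earlier in the excerpt.
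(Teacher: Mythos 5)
Your argument is correct, but note that the paper itself offers no proof to compare against: the lemma is quoted verbatim from Baker and Rumely \cite[Lemma 14.4]{BRh}. Your derivation --- establishing $\Delta_x r(x,y) = 2\mu_{\mathrm{can}} - 2\delta_y$ by combining the decomposition $j_z(x,y)=\tfrac{1}{2}\bigl(r(x,z)+r(y,z)-r(x,y)\bigr)$ with $\Delta_x j_z(x,y)=\delta_y-\delta_z$, and then applying the Green/Dirichlet identity to $f=r(\cdot,y)$ --- is essentially the standard one, and the constants all check out: $\int_\Gamma r(x,y)\,d\mu_{\mathrm{can}}(x)=j_{\mu_{\mathrm{can}}}(y,y)=2\tau(\Gamma)$ and $r(y,y)=0$ give $\int_\Gamma\bigl(\partial_x r(x,y)\bigr)^2dx=4\tau(\Gamma)$. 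The two points you flag are indeed where the only real care is needed: one must check that $r(\cdot,y)$ lies in the class of functions (piecewise smooth, bounded differential variation) to which the Green identity applies, and the interchange of $\Delta_x$ with $\int\cdot\,d\mu_{\mathrm{can}}(\zeta)$ must be justified --- though this can be sidestepped entirely by invoking Chinburg and Rumely's characterization of the canonical measure as $\mu_{\mathrm{can}}=\delta_y+\tfrac{1}{2}\Delta_x r(x,y)$, which hands you the key identity directly.
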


We will use the following results frequently in later sections.
\begin{lemma}\cite[pg. 37]{BRh} \cite[Corollaries 2.17 and 2.22]{C2}\label{lem tau for tree and circle}
If $\ga$ is a tree, i.e. a graph without cycles, then $\tg =\frac{\elg}{4}$. If $\ga$ is a circle graph, then
$\tg =\frac{\elg}{12}$.
\end{lemma}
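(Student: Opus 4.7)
The plan is to apply the integral formula from \lemref{lemtauformula} directly in each case, since in both geometries the resistance function $r(x,y)$ has a very explicit form.

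For the tree case, I would fix an arbitrary reference vertex $y \in \ga$. Since a tree has no cycles, for every $x \in \ga$ there is a unique path from $x$ to $y$, and the electrical resistance between them coincides with the graph-theoretic distance: $r(x,y) = d(x,y)$. Parametrizing each edge by arc length, this means that on the interior of any edge $e_i$, the function $x \mapsto r(x,y)$ is piecewise linear with slope $\pm 1$ (the sign depends on whether moving in the chosen direction along $e_i$ takes one closer to or farther from $y$). Hence $\bigl(\tfrac{\partial}{\partial x} r(x,y)\bigr)^2 = 1$ almost everywhere on $\ga$, and \lemref{lemtauformula} gives
\[
\tg \;=\; \tfrac{1}{4} \int_{\ga} 1 \, dx \;=\; \tfrac{\elg}{4}.
\]

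For the circle case, let $L = \elg$ and fix a point $y$ on the circle. Parametrize the circle by arc length $t \in [0,L]$ starting from $y$, so that a point $x$ at parameter $t$ is connected to $y$ by two arcs of lengths $t$ and $L - t$. Viewing $\ga$ as a resistive circuit, these two arcs act as resistors in parallel, so
\[
r(x,y) \;=\; \frac{t(L-t)}{L}.
\]
Differentiating with respect to the arc-length parameter yields $\tfrac{\partial}{\partial x} r(x,y) = \tfrac{L - 2t}{L}$, and then
\[
\tg \;=\; \tfrac{1}{4} \int_0^L \frac{(L-2t)^2}{L^2}\, dt.
\]
A direct substitution $u = L - 2t$ evaluates this elementary integral to $\tfrac{L}{12} = \tfrac{\elg}{12}$.

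Neither computation presents a genuine obstacle: the only thing one has to be careful about is the justification that $r(x,y) = d(x,y)$ in the tree case (immediate from the physical interpretation, since there is no parallel path to lower the effective resistance) and the parallel-resistance calculation in the circle case (equally immediate from Kirchhoff's laws). Both facts are standard and already implicit in the discussion of $r(x,y)$ and $j_z(x,y)$ recalled in the paragraphs preceding \lemref{lemtauformula}, so the proof reduces to the two short computations above.
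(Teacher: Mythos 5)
Your proof is correct, and it follows essentially the same route as the sources the paper cites for this lemma (the paper itself gives no proof, deferring to \cite{BRh} and \cite{C2}): both computations there also proceed by evaluating the integral of \lemref{lemtauformula} using $r(x,y)=d(x,y)$ on a tree and the parallel-arc formula $r(x,y)=t(L-t)/L$ on a circle. The one point worth making explicit in the tree case is that every edge of a tree is a bridge, so for fixed $y$ the unique path from any interior point of an edge to $y$ always exits through the same endpoint, which is exactly why $r(\cdot,y)$ is affine of slope $\pm 1$ on each edge.
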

\begin{remark}\label{remcutvertex1}
Whenever a graph $\ga$ has vertex $p$ such that removing $p$
disconnects $\ga$, i.e. $p$ is a cut-vertex of $\ga$, then
$\ga=\ga_1\cup \ga_2$ for subgraphs $\ga_1$ and $\ga_2$ with
$\ga_1\cap \ga_2=\{p\}$. In this case, we have $\ta{\ga_1 \cup
\ga_2}=\ta{\ga_1}+\ta{\ga_2}$, which we call the \textit{additive
property} of the tau constant (see also \cite[pg. 11]{C2}).
It was initially noted in \cite{REU}.
\end{remark}
Therefore, proving Conjecture
~\ref{TauBound} for graphs with vertex connectivity $\kappa(\ga)\geq 2$ yields it for all graphs.
\begin{remark}\cite{BRh}\label{rem tau scale-idependence}
If we multiply all lengths on $\Gamma$ by a
positive constant $c$, we obtain a graph $\Gamma'$ of
total length $c \cdot \ell(\Gamma)$. Then $\tau(\Gamma')
= c \cdot \tau(\Gamma)$. This will be called the \textit{scale-independence} of the tau constant.
By this property, to prove \conjref{TauBound}, it is enough to consider metrized graphs with total length $1$.
\end{remark}
\begin{remark}\label{remvalence}
Let $\ga$ be any metrized graph with resistance function $r(x,y)$.
The tau constant $\tg$ is independent of the vertex set $\vv{\ga}$ chosen. In particular, enlarging $\vv{\ga}$ by including points $p \in \ga$ with $\va(p)=2$ does not change $\tg$. Thus, $\tg$ depends only
on the topology and the edge length distribution of the metrized
graph $\ga$. This will be called the \textit{valence property} of the tau constant.
\end{remark}

We will denote by $R_{i}(\ga)$, or by $R_i$ if there is no danger of confusion, the resistance between the end points of an edge $e_i$ of a graph $\ga$  when the interior of the edge $e_i$ is deleted from $\ga$. We will use the following notation in the rest of this paper:
\begin{equation}\label{eqn defining z and r}
\begin{split}
z(\ga)= \sum_{e_i \in \ee{\ga}}\frac{\li^2}{\li+\ri}, \qquad r(\ga)= \sum_{e_i \in \ee{\ga}}\frac{\li \ri}{\li+\ri}.
\end{split}
\end{equation}
Note that $\ell(\ga)=z(\ga)+r(\ga).$

Chinburg and Rumely \cite[page 26]{CR} showed that
\begin{equation}\label{eqn genus}
\sum_{e_i \in \ee{\ga}}\frac{\li}{\li +\ri}=g, \quad \text{equivalently } \sum_{e_i \in \ee{\ga}}\frac{\ri}{\li +\ri}=v-1.
\end{equation}
\begin{notation}
Define $A_{p,q,\ga}:=\int_{\ga}\jj{x}{p}{q}(\ddx \jj{p}{x}{q})^2dx$ for any p, q $\in \ga$.
\end{notation}
Properties of $A_{p,q,\ga}$ were studied in the article \cite[Sections 4 and 8]{C2}.
For any p, q $\in \ga$, $0 \leq A_{p,q,\ga} \leq r(p,q) \big(r_{\ga}(p)-\frac{r(p,q)}{2}\big)$, where  $r_{\ga}(p)=\max \{ r(p,x)|x \in \ga \}$ and $r(x,y)$ is the resistance function in $\ga$. Here, the upper bound follows by combining \cite[Theorem 4.3 part (vi)]{C2} and \cite[Corollary 2.19]{C2}.

We call an edge $e_i \in \ee{\ga}$ a bridge if $\ga-e_i$ is disconnected. If $\ga-e_i$ is connected for every $e_i \in \ee{\ga}$, we call $\ga$ a bridgeless graph.
\begin{theorem}\cite[Theorem 5.7]{C2}\label{thmbasic2}
Let $\ga$ be a bridgeless graph. Suppose that $\pp$, $\qq$ are the
end points of the edge $e_{i}$, for each $i=1,2,\dots, e$. Then,
\begin{equation*}
\tg=\frac{\ell(\ga)}{12}-\sum_{i=1}^{e}\frac{\li
A_{\pp,\qq,\ga-e_{i}}}{(\li+\ri)^2} .
\end{equation*}
\end{theorem}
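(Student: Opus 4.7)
The plan is to apply \lemref{lemtauformula} edge-by-edge, using circuit reduction. Since $\ga$ is bridgeless, $\ga - e_i$ is connected with effective resistance $\ri$ between $p_i$ and $q_i$, so $\ga$ is the parallel combination of $e_i$ (a wire of resistance $\li$) with the network $\ga - e_i$. Fix an edge $e_i$, parametrize it by $t\in[0,\li]$ with $p_i$ at $t=0$, and apply $\tg = \tfrac14\int_\ga(\partial_x r(x,y))^2\,dx$ with $y = p_i$; split the integral as $\int_{e_i} + \int_{\ga - e_i}$.

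For the $e_i$ contribution, parallel reduction gives $r_\ga(x, p_i) = \frac{t(\li - t + \ri)}{\li + \ri}$, hence
\[\int_0^{\li}\bigl(\partial_t r_\ga(x,p_i)\bigr)^2\,dt = \frac{\li(\li^2+3\ri^2)}{3(\li+\ri)^2}.\]
For the $\ga - e_i$ contribution, a superposition argument (adding back $e_i$ creates a circulating current proportional to $j_{p_i}^{\ga-e_i}(q_i,x)/(\li+\ri)$ in the loop $e_i\cup(\ga-e_i)$) yields the Kirchhoff-type identity
\[r_\ga(x,p_i) \;=\; r_{\ga-e_i}(x,p_i) \;-\; \frac{\bigl(j_{p_i}^{\ga-e_i}(x,q_i)\bigr)^2}{\li+\ri},\qquad x\in\ga-e_i.\]
Writing $\alpha(x) := j_{p_i}^{\ga-e_i}(x,q_i)$, differentiating in $x$, squaring, and integrating over $\ga-e_i$ produces $4\tau(\ga-e_i)$ (from the leading term, by \lemref{lemtauformula} applied to $\ga-e_i$) plus a cross term $-\tfrac{4}{\li+\ri}\int\alpha\,\partial_x\alpha\,\partial_x r_{\ga-e_i}(x,p_i)\,dx$ and a quadratic term $\tfrac{4}{(\li+\ri)^2}\int\alpha^2(\partial_x\alpha)^2\,dx$.

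The cross term is simplified via $\alpha\,\partial_x\alpha = \tfrac12\partial_x(\alpha^2)$ and edgewise integration by parts, exploiting that $\alpha$ is affine on each edge of $\ga - e_i$ (since $\Delta\alpha = \delta_{q_i} - \delta_{p_i}$, so $\partial_x\alpha$ is piecewise constant). Together with the quadratic term, and using the standard identity $j_c(a,b) = \tfrac12\bigl(r(a,c)+r(b,c)-r(a,b)\bigr)$ applied in $\ga-e_i$, these correction terms reorganize into the integral defining $A_{p_i,q_i,\ga-e_i}$. Combining the resulting per-edge identity across all $i$ and using the genus identity \eqnref{eqn genus} ($\sum_i \li/(\li+\ri) = g$) together with $\ell(\ga)=\sum_i\li$ produces the claimed formula.

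The main obstacle will be this final algebraic collapse: tracking all boundary contributions from the integrations by parts, verifying that the $\tau(\ga-e_i)$ pieces balance correctly across the sum over $i$, and checking that the residual constant aggregates precisely to $\ell(\ga)/12$ while the coefficients in front of each $A_{p_i,q_i,\ga-e_i}$ come out to exactly $-\li/(\li+\ri)^2$.
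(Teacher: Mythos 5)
Your per-edge computation is sound as far as it goes: the parallel-reduction formula $r_{\ga}(x,\pp)=\frac{t(\li-t+\ri)}{\li+\ri}$ on $e_i$, the resulting integral $\frac{\li(\li^2+3\ri^2)}{3(\li+\ri)^2}$, and the rank-one update $r_\ga(x,\pp)=r_{\ga-e_i}(x,\pp)-\frac{(j^{\ga-e_i}_{\pp}(x,\qq))^2}{\li+\ri}$ are all correct, and the reorganization of the cross and quadratic terms into $A_{\pp,\qq,\ga-e_i}$ is believable (though not carried out). But notice what this per-edge identity \emph{is}: after the algebra collapses, it is exactly \lemref{lemcor2twopunion}, namely $\tg=\ta{\ga-e_{i}}+\frac{\li}{12}-\frac{\ri}{6}+\frac{A_{\pp,\qq,\ga-e_{i}}}{\li+\ri}$, a statement relating $\tg$ to the unknown quantity $\ta{\ga-e_i}$. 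The fatal gap is the last step. Multiplying these identities by $\frac{\li}{\li+\ri}$ and summing, using \eqnref{eqn genus}, yields
\begin{equation*}
g\cdot\tg=\sum_{i=1}^{e}\frac{\li}{\li+\ri}\ta{\ga-e_{i}}+\frac{z(\ga)}{12}-\frac{r(\ga)}{6}+\sum_{i=1}^{e}\frac{\li A_{\pp,\qq,\ga-e_{i}}}{(\li+\ri)^2},
\end{equation*}
which is precisely the intermediate display in the proof of \thmref{thm tau genus} --- an identity that still contains the $e$ unknowns $\ta{\ga-e_i}$ and which the paper resolves only by \emph{invoking} \thmref{thmbasic2}. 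No choice of weights eliminates the $\ta{\ga-e_i}$ terms: you have $e$ linear relations in the $e+1$ quantities $\tg,\ta{\ga-e_1},\dots,\ta{\ga-e_e}$, so the system cannot be solved for $\tg$. Your proposal is therefore circular at the point where you claim the genus identity "produces the claimed formula."

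The missing ingredient is the one the paper itself flags in Section~\ref{section cont and del identities}: the proof of \thmref{thmbasic2} in \cite{C2} uses Euler's formula for homogeneous functions. Since $\tg$ is homogeneous of degree $1$ in the edge lengths, $\tg=\sum_i \li\,\frac{\partial\tg}{\partial \li}$, and differentiating the edge-addition formula of \lemref{lemcoradding1} with respect to $L$ (the quantities $\ta{\ga-e_i}$, $\ri$, $A_{\pp,\qq,\ga-e_i}$ being independent of $\li$) gives $\frac{\partial\tg}{\partial \li}=\frac{1}{12}-\frac{A_{\pp,\qq,\ga-e_{i}}}{(\li+\ri)^2}$; Euler's identity then eliminates the $\ta{\ga-e_i}$ in one stroke. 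Your circuit-theoretic computation could serve as an alternative derivation of \lemref{lemcor2twopunion}, but to reach the stated theorem you still need this homogeneity argument (or some other independent evaluation of $\sum_i\frac{\li}{\li+\ri}\ta{\ga-e_i}$), which your proposal does not supply.
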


\begin{theorem}\cite[Theorem 2.21]{C2}\label{thmbasic}
For any $p,q \in \ga$,
$
\tg = \frac{1}{4}\int_{\ga}(\frac{d}{dx}\jj{x}{p}{q})^2dx +
\frac{1}{4}r(p,q).
$
\end{theorem}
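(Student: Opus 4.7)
The plan is to reduce the identity to \lemref{lemtauformula} by decomposing $\jj{x}{p}{q}$ into a combination of resistance functions in $x$, and then handling the cross term via a Dirichlet energy computation. The starting point is the symmetric Green's formula
\[
\jj{x}{p}{q} \;=\; \tfrac{1}{2}\bigl(r(x,p)+r(x,q)-r(p,q)\bigr),
\]
which one checks at once from $j_x(x,\cdot)=0$, $j_x(y,y)=r(x,y)$, and the symmetry of $j$ in its lower arguments. Differentiating in $x$ on each edge of $\ga$ gives
\[
\ddx \jj{x}{p}{q} \;=\; \tfrac{1}{2}\bigl(\ddx r(x,p)+\ddx r(x,q)\bigr),
\]
so squaring and integrating produces
\[
\int_{\ga}\!\bigl(\ddx \jj{x}{p}{q}\bigr)^{2}dx \;=\; \tfrac{1}{4}\!\int_{\ga}\!\bigl(\ddx r(x,p)\bigr)^{2}dx + \tfrac{1}{4}\!\int_{\ga}\!\bigl(\ddx r(x,q)\bigr)^{2}dx + \tfrac{1}{2} I,
\]
where $I := \int_{\ga} \bigl(\ddx r(x,p)\bigr)\bigl(\ddx r(x,q)\bigr)\,dx$. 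Applying \lemref{lemtauformula} to each pure-square integral collapses the first two terms to $2\tg$.

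To evaluate the cross term $I$ I would polarize the other way:
\[
\int_{\ga}\!\bigl(\ddx r(x,p)-\ddx r(x,q)\bigr)^{2}dx \;=\; 8\tg-2I.
\]
Now the key observation is that $\tfrac{1}{2}\bigl(r(x,p)-r(x,q)\bigr) = \tfrac{1}{2}r(p,q) - \jj{q}{x}{p}$, so if we set $\phi(x):=\jj{q}{x}{p}$ then $\ddx r(x,p)-\ddx r(x,q) = -2\phi'(x)$ on each edge. The function $\phi$ satisfies $\phi(q)=0$, $\phi(p)=r(p,q)$, and its measure-valued Laplacian is $\Delta_x\phi = \delta_p - \delta_q$. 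The standard energy identity on a metrized graph (Green's formula, as developed in \cite{BRh}) then yields
\[
\int_{\ga}\phi'(x)^{2}\,dx \;=\; \phi(p)-\phi(q) \;=\; r(p,q),
\]
so $\int_{\ga}\bigl(\ddx r(x,p)-\ddx r(x,q)\bigr)^{2}dx = 4\,r(p,q)$, and consequently $I = 4\tg - 2\,r(p,q)$.

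Substituting this back gives $\int_{\ga}\bigl(\ddx \jj{x}{p}{q}\bigr)^{2}dx = 2\tg + \tfrac{1}{2}\bigl(4\tg - 2\,r(p,q)\bigr) = 4\tg - r(p,q)$, and dividing by $4$ and adding $\tfrac{1}{4}r(p,q)$ yields the claimed identity. The main obstacle is the Dirichlet energy calculation $\int_{\ga}\phi'^{\,2}\,dx = r(p,q)$: because $\phi$ is only piecewise smooth and its measure Laplacian has Dirac contributions both at $p,q$ and, implicitly, at every interior vertex (where the sum of outward derivatives vanishes by the harmonicity/Kirchhoff condition), one must invoke the integration-by-parts formalism of \cite{BRh} rather than naive edgewise integration; once that identity is in hand the rest of the argument is routine algebra.
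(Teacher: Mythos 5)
Your argument is correct; note, though, that this paper never proves \thmref{thmbasic} itself --- it is imported verbatim from \cite{C2} --- so there is no internal proof to measure you against. On its merits: the three-point formula $\jj{x}{p}{q}=\frac{1}{2}(r(x,p)+r(x,q)-r(p,q))$ is the standard one, the two applications of \lemref{lemtauformula} correctly turn the pure squares into $2\tg$, the identification $\frac{1}{2}(r(x,p)-r(x,q))=\frac{1}{2}r(p,q)-\jj{q}{x}{p}$ is right, and the Dirichlet-energy identity $\int_{\ga}(\ddx \jj{q}{x}{p})^2dx=r(p,q)$ (Green's formula of \cite{BRh} applied to a function whose measure Laplacian is $\delta_p-\delta_q$) yields $I=4\tg-2r(p,q)$ and hence the theorem; you are also right that this last step needs the measure-valued integration-by-parts formalism because of the Dirac contributions at the vertices. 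For comparison, the proof in the cited source reaches the same conclusion one step faster from the single decomposition $r(x,q)=\jj{x}{p}{q}+\jj{q}{x}{p}$: squaring, integrating over $\ga$, and applying \lemref{lemtauformula} gives $4\tg=\int_{\ga}(\ddx \jj{x}{p}{q})^2dx+2\int_{\ga}(\ddx \jj{x}{p}{q})(\ddx \jj{q}{x}{p})dx+r(p,q)$, and the cross term vanishes by the very same Green's identity since $\jj{x}{p}{q}$ is zero at both $x=p$ and $x=q$. Your route pays for the polarization step but in exchange only ever uses the energy identity of $\jj{q}{x}{p}$ against itself (i.e.\ effective resistance equals Dirichlet energy), which is arguably the more familiar fact; both arguments are valid and rest on the same two pillars, \lemref{lemtauformula} and Green's formula.
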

\section{Edge contractions and deletions}\label{sec edge cont and del}

Let $\oga_i$ be the graph obtained by contracting the i-th edge
$e_{i}$, $i \in \{1,2, \dots e\}$, of a given graph $\ga$ to its end
points. If $e_{i} \in \ga$ has end points $\pp$ and $\qq$, then in
$\oga_i$, these points become identical, i.e., $\pp=\qq$. Let $\tga_i$ be the graph obtained by
identifying the end points of the $i-$th edge $e_{i} \in \ee{\ga}$.
This makes $e_i$ into a loop in $\tga_i$. Note that $\ta{\oga_i}=\ta{\tga_i}-\frac{\li}{12}$
by the additive property of the tau constant and \lemref{lem tau for tree and circle}.

The following lemma sheds light on how the tau constant changes by contraction of an edge:
\begin{lemma}\cite[Lemma 6.2]{C2}\label{lemcontract1}
Let $e_i \in \ee{\ga}$ be such that $\ga-e_i$ is connected. Then we have
$$\ta{\ga} =\ta{\oga_i} + \frac{\li}{12} -\frac{\li
A_{\pp,\qq,\ga-e_i}}{\ri(\li+\ri)}.$$
\end{lemma}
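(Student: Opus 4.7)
The plan is to apply \lemref{lemtauformula} to $\ga$ with basepoint $y = q_i$ and to $\oga_i$ with basepoint the identified vertex $p := p_i = q_i$, split the $\ga$-integral over $e_i$ and $\ga - e_i$, re-express the relevant resistance functions on $\ga - e_i$ purely in terms of data intrinsic to $\ga - e_i$, and reduce the difference $\ta{\ga} - \ta{\oga_i}$ to $\li/12$ plus a single multiple of $A_{p_i, q_i, \ga - e_i}$.

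On $e_i$, parallel-circuit analysis (the edge $e_i$ sits in parallel with $\ga - e_i$, whose effective resistance between $p_i$ and $q_i$ is $\ri$) gives $r_\ga(x, q_i) = (\li - s)(s + \ri)/(\li + \ri)$ for $x$ at arclength $s \in [0, \li]$ from $p_i$, so $\int_{e_i}(\frac{d}{dx} r_\ga(x, q_i))^2 dx = \li(\li^2 + 3\ri^2)/(3(\li + \ri)^2) = \li/3 + 2\li\ri(\ri - \li)/(3(\li + \ri)^2)$. For $x \in \ga - e_i$, setting $\beta(x) := r_{\ga - e_i}(x, q_i)$ and $\psi(x) := \hj{q_i}{x}{p_i}$, a rank-one circuit perturbation (adding a parallel edge of length $\li$, respectively $0$, to $\ga - e_i$) yields
\begin{equation*}
r_\ga(x, q_i) \; = \; \beta(x) - \frac{\psi(x)^2}{\li + \ri}, \qquad r_{\oga_i}(x, p) \; = \; \beta(x) - \frac{\psi(x)^2}{\ri}.
\end{equation*}
Via the three-variable identity $j_z(x,y) = \tfrac{1}{2}(r(x,z) + r(y,z) - r(x,y))$ one checks $\hj{x}{p_i}{q_i} = \beta - \psi$ and $\hj{p_i}{x}{q_i} = \ri - \psi$, so $A_{p_i, q_i, \ga - e_i} = \int_{\ga - e_i}(\beta - \psi)(d\psi/dx)^2\, dx$.

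Differentiating, squaring, and subtracting the two displayed expressions, the integrand on $\ga - e_i$ becomes an explicit rational combination of $\psi\,(d\beta/dx)(d\psi/dx)$ and $\psi^2(d\psi/dx)^2$ with coefficients in $\li, \ri$. Integrating, adding the $e_i$-contribution from the previous step, and dividing by four reduces the desired identity to a Green-type relation on $\ga - e_i$ between $\int \psi(d\beta/dx)(d\psi/dx)\,dx$, $\int \psi^2(d\psi/dx)^2\,dx$, $A_{p_i, q_i, \ga - e_i}$, and the explicit constant $\ri^2(\li - \ri)/(6(\li + \ri))$. The main obstacle is precisely verifying this relation; it should follow from integration by parts on $\ga - e_i$, exploiting that the metric-graph Laplacian of $\psi$ is $\delta_{p_i} - \delta_{q_i}$ (since $\psi$ is the voltage for unit current from $p_i$ to $q_i$ in $\ga - e_i$), the boundary values $\beta(p_i) = \psi(p_i) = \ri$ and $\beta(q_i) = \psi(q_i) = 0$, and the defining identity $2\psi = \beta - \alpha + \ri$ with $\alpha(x) := r_{\ga - e_i}(x, p_i)$. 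Together these produce exactly the constant needed to cancel the $e_i$-remainder, leaving the clean $A$-term with coefficient $-\li/(\ri(\li + \ri))$ and completing the identity.
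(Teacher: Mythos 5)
Your proposal is correct, and it is necessarily a different route from the paper's, because the paper offers no proof of this statement at all: it is imported verbatim as \cite[Lemma 6.2]{C2}. (Judging from Equation (\ref{eqn contr Apq}) later in the paper, the proof in \cite{C2} most likely obtains the contraction formula by combining the deletion formula of \lemref{lemcor2twopunion} with the point-identification formula $\ta{\ga_{pq}}=\tg-\frac{r(p,q)}{6}+\frac{A_{p,q,\ga}}{r(p,q)}$ of \lemref{lemcoradding1}, subtracting $\frac{A}{\ri}-\frac{A}{\li+\ri}=\frac{\li A}{\ri(\li+\ri)}$; your argument is instead a direct verification from \lemref{lemtauformula}, self-contained modulo standard circuit reductions and the Green's identity for the measure-valued Laplacian.) All of your intermediate formulas check out: the parallel-circuit expression on $e_i$ and its integral $\frac{\li(\li^2+3\ri^2)}{3(\li+\ri)^2}$, the rank-one perturbation formulas (verifiable from the $Y$-reduction of $\ga-e_i$ with respect to $\pp$, $\qq$, $x$), and the identities $\hj{x}{\pp}{\qq}=\beta-\psi$, $\hj{\pp}{x}{\qq}=\ri-\psi$. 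The one step you flag as the ``main obstacle'' does close exactly as you predict, and it is worth recording why: expanding the difference of squares leaves the combination $\frac{4\li}{\ri(\li+\ri)}\int\psi\,\beta'\psi'\,dx-\frac{4\li(\li+2\ri)}{\ri^2(\li+\ri)^2}\int\psi^2(\psi')^2\,dx$, and writing $\beta=\phi+\psi$ with $\phi:=\hj{x}{\pp}{\qq}$, the Green's identity $\int_{\ga-e_i}f'\psi'\,dx=f(\pp)-f(\qq)$ (valid since $\Delta\psi=\delta_{\pp}-\delta_{\qq}$) applied to $f=\psi^3/3$, $f=\psi^2/2$, and $f=\psi\phi$ (which vanishes at both $\pp$ and $\qq$ because $\phi(\pp)=0=\psi(\qq)$) gives
\begin{equation*}
\int_{\ga-e_i}\psi^2(\psi')^2\,dx=\frac{\ri^3}{3},\qquad \int_{\ga-e_i}\psi\,\beta'\psi'\,dx=\frac{\ri^2}{2}-A_{\pp,\qq,\ga-e_i}.
\end{equation*}
Substituting these, the non-$A$ terms cancel the $e_i$-remainder $\frac{2\li\ri(\ri-\li)}{3(\li+\ri)^2}$ identically, leaving $4\big(\ta{\ga}-\ta{\oga_i}\big)=\frac{\li}{3}-\frac{4\li A_{\pp,\qq,\ga-e_i}}{\ri(\li+\ri)}$ as required. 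What your approach buys is independence from the machinery of \cite{C2}; what it costs is the longer computation above, which the two-line combination of \lemref{lemcor2twopunion} and \lemref{lemcoradding1} avoids.
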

Note that \lemref{lemcontract1} involves terms containing $A_{\pp,\qq,\ga-e_i}$, which are fairly difficult to understand. One wants to understand the effect of edge contraction in a better way. An important step in this direction is provided by \propref{prop tau contract} and \thmref{thm tau contract}, which depend on \thmref{thmbasic2}.
\begin{proposition}\label{prop tau contract}
Let $\ga$ be a bridgeless graph with v$:=\#(\vv{\ga}) \geq 3$. Then,
$$\ta{\ga} =\frac{1}{v-2}\sum_{i=1}^{e}\frac{\ri}{\li+\ri}\ta{\oga_i}
- \frac{z(\ga)}{12(v-2)}, \quad \ta{\ga} =\frac{1}{v-2}\sum_{i=1}^{e}\frac{\ri}{\li+\ri}\ta{\tga_i} - \frac{\ell(\ga)}{12(v-2)}.$$
\end{proposition}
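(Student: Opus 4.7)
The plan is to combine \lemref{lemcontract1} with \thmref{thmbasic2} to eliminate the unwieldy $A_{\pp,\qq,\ga-e_i}$ terms, and then simplify using the identities in \eqnref{eqn genus} together with the definitions of $z(\ga)$ and $r(\ga)$.

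First, since $\ga$ is bridgeless, \lemref{lemcontract1} applies to every edge $e_i$, giving
\[
\frac{\li A_{\pp,\qq,\ga-e_i}}{\ri(\li+\ri)}=\ta{\oga_i}+\frac{\li}{12}-\ta{\ga}.
\]
Multiplying both sides by $\frac{\ri}{\li+\ri}$ produces precisely the summand appearing in \thmref{thmbasic2}:
\[
\frac{\li A_{\pp,\qq,\ga-e_i}}{(\li+\ri)^2}=\frac{\ri}{\li+\ri}\left(\ta{\oga_i}+\frac{\li}{12}-\ta{\ga}\right).
\]

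Next, I would sum over $i=1,\dots,e$ and substitute into $\tg=\frac{\ell(\ga)}{12}-\sum_i \frac{\li A_{\pp,\qq,\ga-e_i}}{(\li+\ri)^2}$ from \thmref{thmbasic2}. Using $\sum_i \frac{\ri}{\li+\ri}=v-1$ from \eqnref{eqn genus} and the definition $r(\ga)=\sum_i\frac{\li\ri}{\li+\ri}$, the expansion becomes
\[
\tg=\frac{\ell(\ga)}{12}-\sum_{i=1}^e\frac{\ri}{\li+\ri}\ta{\oga_i}-\frac{r(\ga)}{12}+(v-1)\tg.
\]
Moving the $(v-1)\tg$ term across, and using $\ell(\ga)-r(\ga)=z(\ga)$, yields
\[
(v-2)\tg=\sum_{i=1}^e\frac{\ri}{\li+\ri}\ta{\oga_i}-\frac{z(\ga)}{12},
\]
which, after dividing by $v-2$ (justified since $v\geq 3$), gives the first claimed identity.

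For the second identity, I would substitute the relation $\ta{\oga_i}=\ta{\tga_i}-\frac{\li}{12}$ (noted just before \lemref{lemcontract1}) into the first identity. This produces an extra term $-\frac{1}{v-2}\sum_i\frac{\ri\li}{12(\li+\ri)}=-\frac{r(\ga)}{12(v-2)}$, which combines with $-\frac{z(\ga)}{12(v-2)}$ to give $-\frac{\ell(\ga)}{12(v-2)}$ by the relation $\ell(\ga)=z(\ga)+r(\ga)$ noted after \eqnref{eqn defining z and r}. No serious obstacle is anticipated; the essential trick is noticing that multiplying \lemref{lemcontract1} by the weight $\frac{\ri}{\li+\ri}$ perfectly aligns it with the sum in \thmref{thmbasic2}, after which the result follows by bookkeeping with \eqnref{eqn genus}.
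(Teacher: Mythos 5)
Your proposal is correct and follows essentially the same route as the paper: multiply the identity of \lemref{lemcontract1} by the weight $\frac{\ri}{\li+\ri}$, sum over all edges using \eqnref{eqn genus}, eliminate the $A_{\pp,\qq,\ga-e_i}$ terms via \thmref{thmbasic2}, and deduce the second formula from $\ta{\oga_i}=\ta{\tga_i}-\frac{\li}{12}$. All the bookkeeping with $z(\ga)$, $r(\ga)$, and $\ell(\ga)=z(\ga)+r(\ga)$ checks out.
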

\begin{proof}
Multiply both sides of the equation in \lemref{lemcontract1} by
$\frac{\ri}{\li+\ri}$,
sum over all edges of $\ga$, and use the fact that
$\sum_{i=1}^{e}\frac{\ri}{\li+\ri} = v-1$ (see \eqnref{eqn genus}) to obtain
\begin{equation*}
\begin{split}
(v-1)\ta{\ga} =\sum_{i=1}^{e}\frac{\ri}{\li+\ri}\ta{\oga_i} +
\frac{r(\ga)}{12}-\sum_{i=1}^{e}\frac{\li A_{\pp,\qq,\ga-e_i}}{(\li+\ri)^2}.
\end{split}
\end{equation*}
Recall that $z(\ga)$ and $r(\ga)$ are defined in \eqnref{eqn defining z and r}.
We obtain the first formula by using \thmref{thmbasic2}. Then the second formula follows from the fact that $\ta{\oga_i}=\ta{\tga_i}-\frac{\li}{12}$.
\end{proof}
In the proof of \propref{prop tau contract}, we used
the fact that $\ga$ is bridgeless when we worked with terms
$A_{\pp,\qq,\ga-e_i}$. We will now extend the result of
\propref{prop tau contract} to any connected graph $\ga$. For an
edge $e_i$ which is a bridge in $\ga$, the end points $\pp$ and
$\qq$ become disconnected in $\ga-e_i$, and so $\ri=\infty$. In
such cases, if we use the limiting values of the corresponding
terms, it is possible to extend \propref{prop tau contract} to a
metrized graph with bridges. More precisely, note that
\begin{equation*}
\begin{split}
\ta{\ga} &=\frac{1}{v-2}\sum_{i=1}^{e}\big[\lim_{t\rightarrow
\ri}\frac{t}{\li+t}\big]\ta{\oga_i} -
\frac{1}{12(v-2)}\sum_{i=1}^{e}\big[\lim_{t\rightarrow
\ri}\frac{\li^2}{\li+t}\big] .
\end{split}
\end{equation*}
In short, we set $\frac{\ri}{\li+\ri}:=1$
and $\frac{\li}{\li+\ri}:=0$ whenever $\ri=\infty$.
\begin{theorem}\label{thm tau contract}
Let $\ga$ be a metrized graph with v$:=\#(\vv{\ga}) \geq 3$. Then we have
$$\ta{\ga} =\frac{1}{v-2}\sum_{i=1}^{e}\frac{\ri}{\li+\ri}\ta{\oga_i}
- \frac{z(\ga)}{12(v-2)},
\quad \ta{\ga} =\frac{1}{v-2}\sum_{i=1}^{e}\frac{\ri}{\li+\ri}\ta{\tga_i} - \frac{\ell(\ga)}{12(v-2)}.$$
\end{theorem}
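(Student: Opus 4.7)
The plan is to reduce \thmref{thm tau contract} to \propref{prop tau contract} by a limit argument, guided by the display immediately preceding the statement. For each bridge edge $e_i$ of $\ga$, attach an auxiliary edge $\widetilde{e}_i$ of length $t$ in parallel with $e_i$ (between the same two endpoints), and let $\ga_t$ denote the resulting metrized graph. Since the vertex set of $\ga_t$ equals that of $\ga$, the parameter $v$ is unchanged; and for every $t > 0$ the graph $\ga_t$ is bridgeless, so \propref{prop tau contract} applies to $\ga_t$ and yields an identity that is exact in $t$.

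The first task is to express each quantity appearing in that identity as a function of $t$. The additive property (\remref{remcutvertex1}) together with \lemref{lem tau for tree and circle} applied to the cycles $e_i \cup \widetilde{e}_i$ gives $\ta{\ga_t}$, each contracted tau constant $\ta{\oga_{t,j}}$ (for an original or auxiliary edge $j$), and $z(\ga_t)$ in closed form in terms of $\ta{\ga}$, $\ta{\oga_i}$ and explicit $t$-dependent cycle corrections. The key resistance computations are $R_i(\ga_t) = R_i(\ga)$ for every non-bridge edge $e_i$ of $\ga$ (each auxiliary edge attaches to $\ga_t$ only at a cut-vertex of $\ga$ and therefore acts as a pendant for current flow between the endpoints of any non-bridge), while $R_i(\ga_t) = t$ for a bridge $e_i$ and $R_{\widetilde{i}}(\ga_t) = \ell_i$ for its partner $\widetilde{e}_i$ (the only path from $p_i$ to $q_i$ in $\ga_t - \widetilde{e}_i$ is along $e_i$ itself).

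Substituting these into the identity of \propref{prop tau contract} for $\ga_t$ and letting $t \to \infty$, all $O(t)$ growth cancels exactly. On the left, $\ta{\ga_t}$ is linear in $t$ with rate $|B|/12$, where $B$ is the set of bridges of $\ga$, coming from the cycles of length $\ell_i + t$. On the right, the same linear rate is produced by two mechanisms: the sum $\sum_j \frac{R_j(\ga_t)}{\ell_j(\ga_t) + R_j(\ga_t)} \ta{\oga_{t,j}}$, through the quantity $D(t) := \sum_{i \in B}\big((\ell_i + t)/12 - \ell_i/4\big)$ appearing with the coefficient $v-1$ by \eqnref{eqn genus}; and the $z$-subtraction, through $\frac{1}{12(v-2)} \cdot \frac{t^2}{\ell_i+t}$ contributed by each auxiliary edge. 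After this cancellation the surviving finite part is precisely the first identity in \thmref{thm tau contract}, with the stated conventions $\frac{R_i}{\ell_i + R_i} := 1$ and $\frac{\ell_i}{\ell_i + R_i} := 0$ when $R_i = \infty$.

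The main obstacle is the careful bookkeeping of these asymptotic cancellations; it is cleanest to split the edges of $\ga_t$ into three classes (original non-bridge, original bridge, and auxiliary) and to expand each class's contribution to order $O(1)$ before summing. The second equality in the theorem follows from the first by substituting $\ta{\oga_i} = \ta{\tga_i} - \ell_i/12$ and using $\sum_i \frac{R_i \ell_i}{\ell_i + R_i} = r(\ga)$ together with $\ell(\ga) = z(\ga) + r(\ga)$, exactly as at the end of the proof of \propref{prop tau contract}.
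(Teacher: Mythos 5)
Your argument is correct, and it reaches \thmref{thm tau contract} by a genuinely different route than the paper. The paper removes bridges by \emph{contracting} them: it forms the bridgeless graph $\gamma$ obtained by collapsing all $k$ bridges, applies \propref{prop tau contract} to $\gamma$ (which now has $v-k$ vertices), and reassembles $\ta{\ga}$, $\ta{\oga_i}$ and $z(\ga)$ from $\ta{\gamma}$, $\ta{\overline{\gamma}_i}$ and $z(\gamma)$ via the additive property; the price is bookkeeping across a change in vertex count, but no limits are needed. You instead remove bridges by \emph{doubling} them with parallel edges of length $t$, which keeps the vertex set fixed, so \propref{prop tau contract} applies to $\ga_t$ with the same $v$ for every $t>0$, and the desired identity drops out after cancellation. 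Your key resistance computations are right ($R_j(\ga_t)=R_j$ for non-bridges since the doubled bridge and everything beyond it hangs off the relevant $2$-edge-connected block at a single point; $R_i(\ga_t)=t$ and $R_{\widetilde i}(\ga_t)=L_i$ for a bridge pair), and I checked that the cancellation you describe is in fact exact in $t$, not merely asymptotic: writing $D(t)=\sum_{i\in B}\big(\tfrac{L_i+t}{12}-\tfrac{L_i}{4}\big)$, the right side of the identity for $\ga_t$ carries $(v-1)D(t)$ against $(v-2)D(t)$ on the left, and the surplus $D(t)$ is killed termwise by the auxiliary-edge contributions $\tfrac{tL_i/12+L_i^2/4}{L_i+t}$ together with $-\tfrac{1}{12}\tfrac{L_i^2+t^2}{L_i+t}$ from $z(\ga_t)$. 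So you could even state the identity for each fixed $t$ and skip the limit; what the limiting formulation buys you is that it directly \emph{justifies} the conventions $\tfrac{\ri}{\li+\ri}:=1$, $\tfrac{\li}{\li+\ri}:=0$ for $\ri=\infty$ that the paper only motivates informally before the theorem. The one place to tighten your write-up: the assertion that each auxiliary edge ``attaches only at a cut-vertex of $\ga$'' is not literally true (a bridge endpoint need not be a cut vertex, e.g.\ a leaf of the block tree), but the conclusion you draw from it --- that no current crosses a doubled bridge when computing $R_j$ for a non-bridge $e_j$ --- is correct and is the statement you should justify directly.
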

\begin{proof}
We already dealt with the case in which $\ga$ is bridgeless. Suppose
that $\ga$ has bridges. Let $B=\{e_{i_{1}}, e_{i_{2}}, \dots,
e_{i_{k}} \}$ be the set of all bridges in $\ga$, for some positive
integer $k$. Let $\gamma$ be the graph obtained from $\ga$ by
contracting all of its bridges to their end points. Thus, an edge
$e_i$ belongs to $\ee{\gamma}$ iff $e_i \not\in B $. By the additive property of
$\tg$ (i.e., by \remref{remcutvertex1}) and \lemref{lem tau for tree and circle},
\begin{equation}\label{eqncontract1}
\begin{split}
\ta{\ga}=\ta{\gamma} +\frac{1}{4}\sum_{e_j \in B}L_j .
\end{split}
\end{equation}
Clearly, $\gamma$ is connected and bridgeless with $v-k$
vertices. Note that if $e_i \in B$, then
\begin{equation}\label{eqncontract2a}
\begin{split}
\ta{\oga_i} = \ta{\gamma} +\frac{1}{4}\sum_{e_j \in B}L_{j} -\frac{\li}{4},
\end{split}
\end{equation}
and if $e_i \not\in B$, then
\begin{equation}\label{eqncontract2b}
\begin{split}
\ta{\oga_i} = \ta{\overline{\gamma}_i} +\frac{1}{4}\sum_{e_j \in B}L_j .
\end{split}
\end{equation}
In either case,
\begin{equation}\label{eqncontract2c}
\begin{split}
z(\ga)=z(\gamma),
\text{ and }
 \sum_{e_i \in \ee{\ga}-B}\frac{\ri(\ga)}{\li+\ri(\ga)}=\sum_{e_i \in \ee{\gamma}}\frac{\ri(\gamma)}{\li+\ri(\gamma)}=v-k-1.
\end{split}
\end{equation}

Since $\gamma$ is bridgeless, we can apply \propref{prop tau contract} to obtain
\begin{equation}\label{eqncontract3}
\begin{split}
\ta{\gamma}=\frac{1}{v-k-2}\sum_{e_i \in
\ee{\gamma}}\frac{\ri}{\li+\ri}\ta{\overline{\gamma}_i}
-\frac{z(\gamma)}{12(v-k-2)}.
\end{split}
\end{equation}
Then by Equations (\ref{eqncontract2a}) and (\ref{eqncontract2b})
\begin{equation*}
\begin{split}
\sum_{e_i \in \ee{\ga}}\frac{\ri}{\li+\ri}\ta{\oga_i}
& =\sum_{i \not\in B}\frac{\ri}{\li+\ri}\big( \ta{\overline{\gamma}_i}
+\frac{1}{4}\sum_{j \in B}L_j\big)+\sum_{i \in
B}\frac{\ri}{\li+\ri}\big( \ta{\gamma} +\frac{1}{4}\sum_{j \in B}L_j
-\frac{L_{i}}{4}\big),
\\ &=\sum_{i \not\in B}\frac{\ri}{\li+\ri}\ta{\overline{\gamma}_i}
+\Big( \frac{1}{4}\sum_{j \in B}L_j \Big) \Big(\sum_{i \not\in
B}\frac{\ri}{\li+\ri}\Big) +k\ta{\gamma}+\frac{k-1}{4}\sum_{j \in
B}L_j
\\ &=(v-2)\ta{\gamma}+\frac{z(\gamma)}{12}
+\frac{v-2}{4}\sum_{j \in B}L_j, \quad \text{by Equations (\ref{eqncontract3}) and (\ref{eqncontract2c})}.
\\ &=(v-2)\tg + \frac{z(\ga)}{12}, \quad \text{by Equations (\ref{eqncontract1}) and  (\ref{eqncontract2c}).}
\end{split}
\end{equation*}
This is equivalent to the first formula we wanted to show.
By using the fact that $\ta{\tga_i}=\ta{\oga_i}+\frac{\li}{12}$, for all $e_i \in \ee{\ga}$, along with the first formula, we obtain the second formula.
\end{proof}
%
%
The following lemma shows how the tau constant changes by deletion of an edge when the remaining graph is connected.
\begin{lemma}\cite[Corollary 5.3]{C2}\label{lemcor2twopunion}
Suppose that $\ga$ is a graph such that $\ga-e_{i}$, for some edge
$e_{i} \in \ee{\ga}$ with length $\li$ and end points $p_{i}$ and
$q_{i}$, is connected. Then we have
\begin{equation*}
\begin{split}
\ta{\ga} = \ta{\ga-e_{i}} +\frac{\li}{12}-\frac{\ri}{6}+
\frac{A_{p_{i},\qq,\ga-e_{i}}}{\li+\ri}.
\end{split}
\end{equation*}
\end{lemma}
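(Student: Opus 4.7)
The plan is to apply Theorem~\ref{thmbasic} to both $\ga$ and $\ga-e_i$ with the common choice of basepoints $p = p_i$, $q = q_i$. By parallel resistance, $r_\ga(p_i,q_i) = \frac{\li\ri}{\li+\ri}$, while $r_{\ga-e_i}(p_i,q_i) = \ri$ by the definition of $\ri$. Subtracting the two resulting identities reduces the claim to a relation between the Dirichlet-type integrals $\int_\ga \bigl(\frac{d}{dx} j_x^{\ga}(p_i,q_i)\bigr)^2 dx$ and $\int_{\ga-e_i}\bigl(\frac{d}{dx} j_x^{\ga-e_i}(p_i,q_i)\bigr)^2 dx$, up to an explicit constant in $\li$ and $\ri$.

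I would then split $\int_\ga = \int_{e_i}+\int_{\ga-e_i}$. On $e_i$, parametrized by arclength $t \in [0,\li]$ from $p_i$, the symmetric identity $j_z(x,y) = \frac{1}{2}\bigl(r(x,z)+r(y,z)-r(x,y)\bigr)$ combined with explicit parallel-resistance computations gives the quadratic profile $j_x^{\ga}(p_i,q_i) = \frac{t(\li-t)}{\li+\ri}$; differentiating and integrating yields the edge-$e_i$ contribution $\frac{\li^3}{3(\li+\ri)^2}$. For $x \in \ga-e_i$, a Y--$\Delta$ reduction of $\ga-e_i$ at the three terminals $\{x,p_i,q_i\}$, followed by placing the edge $e_i$ of resistance $\li$ in parallel between $p_i$ and $q_i$, produces the comparison identity
\[
j_x^{\ga}(p_i,q_i) \;=\; j_x^{\ga-e_i}(p_i,q_i) \;+\; \frac{j_{p_i}^{\ga-e_i}(x,q_i)\,j_{q_i}^{\ga-e_i}(x,p_i)}{\li+\ri}.
\]

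To finish, I would differentiate this identity in $x$, square, and integrate over $\ga-e_i$. The constraint $j_{p_i}^{\ga-e_i}(x,q_i)+j_{q_i}^{\ga-e_i}(x,p_i) = r_{\ga-e_i}(p_i,q_i) = \ri$, which is constant in $x$, makes the derivative of the product collapse to $(\ri-2g)g'$ for $g := j_{p_i}^{\ga-e_i}(x,q_i)$. Combining this with the Dirichlet-energy identity $\int_{\ga-e_i}(g')^2\,dx = \ri$ (the power dissipated by unit current from $q_i$ to $p_i$ in $\ga-e_i$), the definition $A_{p_i,q_i,\ga-e_i} = \int_{\ga-e_i} j_x^{\ga-e_i}(p_i,q_i)\,(g')^2\,dx$, and an integration by parts against the distributional Laplacians of $f := j_x^{\ga-e_i}(p_i,q_i)$ and $g$ (using $\int_{\ga-e_i} f'g'\,dx = 0$, which follows since $f$ vanishes at $p_i$ and $q_i$), the remaining integrals collapse and the explicit constants align to produce the claimed $\frac{\li}{12} - \frac{\ri}{6}$ together with the coefficient $\frac{1}{\li+\ri}$ of $A_{p_i,q_i,\ga-e_i}$. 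The main obstacle is this last bookkeeping step, where one must carefully track the interplay between the cross term $\int f'(gh)'\,dx$ and the square term $\int ((gh)')^2\,dx$, and verify that their combination with the $e_i$-contribution and the resistance difference produces exactly the predicted coefficients.
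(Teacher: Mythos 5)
The paper does not actually prove this lemma --- it is imported verbatim from \cite[Corollary 5.3]{C2} --- so there is no internal proof to measure your argument against; I can only assess it on its own terms, and it checks out. Both circuit computations are correct: on $e_i$ the triangle $\{x,\pp,\qq\}$ with resistances $t$, $\li-t$, $\ri$ gives $j^{\ga}_x(\pp,\qq)=\frac{t(\li-t)}{\li+\ri}$ and hence the contribution $\frac{\li^3}{3(\li+\ri)^2}$, and for $x\in\ga-e_i$ the Y-reduction (exactly the one in Figure~\ref{fig edgedelete5}) gives $j^{\ga}_x(\pp,\qq)=j^{\ga-e_i}_x(\pp,\qq)+\frac{gh}{\li+\ri}$ with $g:=j^{\ga-e_i}_{\pp}(x,\qq)$, $h:=j^{\ga-e_i}_{\qq}(x,\pp)$, $g+h=\ri$. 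One caution on the final bookkeeping: you should integrate by parts only against $\Delta g=\delta_{\qq}-\delta_{\pp}$ and its chain-rule consequences $\Delta(g^2)=2g\,\Delta g-2(g')^2dx$, $\Delta(g^3)=3g^2\,\Delta g-6g(g')^2dx$, never against $\Delta f$ for $f:=j^{\ga-e_i}_x(\pp,\qq)$, since $\Delta f$ is not a discrete measure (it carries a continuous part). Done this way, and writing $A:=A_{\pp,\qq,\ga-e_i}=\int f(g')^2dx$, one gets $\int f'g'\,dx=0$, $\int f'gg'\,dx=\tfrac12\int f\,\Delta(g^2)=-A$, $\int g(g')^2dx=\tfrac{\ri^2}{2}$, $\int g^2(g')^2dx=\tfrac{\ri^3}{3}$, $\int(g')^2dx=\ri$, and $\int(f')^2dx=4\ta{\ga-e_i}-\ri$ from Theorem~\ref{thmbasic}. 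Substituting these into the expansion of $\int_{\ga}\bigl(\tfrac{d}{dx}j^{\ga}_x(\pp,\qq)\bigr)^2dx$ and using $\li^3+\ri^3=(\li+\ri)^3-3\li\ri(\li+\ri)$, every term cancels except $\frac{\li}{12}-\frac{\ri}{6}+\frac{A}{\li+\ri}$, so the ``main obstacle'' you flag does close and the proposal is a complete, correct derivation from Theorem~\ref{thmbasic}.
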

We can combine \lemref{lemcontract1} and \lemref{lemcor2twopunion} to obtain the following Lemma:
\begin{lemma}\label{lem contanddel}
Suppose that $\ga$ is a graph such that $\ga-e_{i}$, for some edge
$e_{i} \in \ee{\ga}$ with length $\li$ is connected. Then we have
\begin{equation*}
\begin{split}
\ta{\ga} = \frac{\li}{\li+\ri}\ta{\ga-e_{i}} +\frac{\ri}{\li+\ri}\ta{\oga_i}+\frac{\li^2-\li \ri}{12 (\li+\ri)}.
\end{split}
\end{equation*}
\end{lemma}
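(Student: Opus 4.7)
The plan is to eliminate the unknown quantity $A_{\pp,\qq,\ga-e_i}$ by combining the two previously stated identities \lemref{lemcontract1} and \lemref{lemcor2twopunion}, which both relate $\tg$ to $A_{\pp,\qq,\ga-e_i}$ but involve it with different coefficients. Since $\ga-e_i$ is assumed connected, both lemmas apply and give us a linear system of two equations in the two unknowns $\tg$ (relating to $A_{\pp,\qq,\ga-e_i}$) which we can solve by elimination.

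Concretely, I would first set $A:=A_{\pp,\qq,\ga-e_i}$ for brevity and write down the two identities:
\begin{equation*}
\ta{\ga} \;=\; \ta{\oga_i} + \frac{\li}{12} - \frac{\li A}{\ri(\li+\ri)},
\qquad
\ta{\ga} \;=\; \ta{\ga-e_i} + \frac{\li}{12} - \frac{\ri}{6} + \frac{A}{\li+\ri}.
\end{equation*}
To eliminate $A$, I would multiply the first equation by $\ri$ and add it to $\li$ times the second equation, scaled so that the $A$-terms cancel. A cleaner route is to take the weighted sum $\frac{\ri}{\li+\ri}\cdot(\text{first}) + \frac{\li}{\li+\ri}\cdot(\text{second})$: the coefficient of $A$ on the right-hand side becomes $-\frac{\ri}{\li+\ri}\cdot\frac{\li}{\ri(\li+\ri)} + \frac{\li}{\li+\ri}\cdot\frac{1}{\li+\ri} = 0$, so $A$ disappears. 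The left-hand side collapses to $\ta{\ga}$ because the weights sum to $1$.

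After that cancellation, all that remains is to simplify the constant term. The contribution from the $\frac{\li}{12}$ pieces combines into $\frac{\li}{12}$ (again because the weights sum to $1$), while the $-\frac{\ri}{6}$ piece of the deletion identity contributes $-\frac{\li \ri}{6(\li+\ri)}$. Putting these together gives
\begin{equation*}
\frac{\li}{12} - \frac{\li \ri}{6(\li+\ri)} \;=\; \frac{\li(\li+\ri) - 2\li \ri}{12(\li+\ri)} \;=\; \frac{\li^2 - \li \ri}{12(\li+\ri)},
\end{equation*}
which is precisely the constant term in the claimed identity. Combined with the weighted sum of $\ta{\oga_i}$ and $\ta{\ga-e_i}$, this yields the desired formula.

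There is no genuine obstacle here; the main thing to be careful about is choosing the right linear combination of the two input identities so that $A$ is eliminated in one stroke, and then keeping track of the arithmetic in the constant term. Since $\ga-e_i$ is connected by hypothesis, $\ri$ is finite and positive, so all the denominators appearing are nonzero and no limiting argument (as was needed when extending \propref{prop tau contract} to \thmref{thm tau contract}) is required.
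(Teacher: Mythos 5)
Your proposal is correct and is essentially identical to the paper's own proof, which likewise multiplies the identity of \lemref{lemcontract1} by $\frac{\ri}{\li+\ri}$ and that of \lemref{lemcor2twopunion} by $\frac{\li}{\li+\ri}$ and adds, so that the $A_{\pp,\qq,\ga-e_i}$ terms cancel. Your verification of the constant term $\frac{\li}{12}-\frac{\li\ri}{6(\li+\ri)}=\frac{\li^2-\li\ri}{12(\li+\ri)}$ is also accurate.
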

\begin{proof}
Multiply the formula in \lemref{lemcontract1} by $\frac{\ri}{\li+\ri}$ and the formula in \lemref{lemcor2twopunion} by $\frac{\li}{\li+\ri}$. Then add the results.
\end{proof}
To show the effect of edge deletion on the tau constant without using any terms with $A_{\pp,\qq,\ga-e_i}$, we have the following theorem:
\begin{theorem}\label{thm tau genus}
Let $\ga$ be a bridgeless graph with edges $\{ e_{1}, e_{2}, \dots,
e_{e} \}$. Then,
$$\tg=\frac{1}{g+1}\sum_{i=1}^{e}\frac{\li}{\li+\ri}\ta{\ga-e_{i}}+\frac{\ell(\ga)}{6(g+1)}-
\frac{r(\ga)}{4(g+1)}
.$$
\end{theorem}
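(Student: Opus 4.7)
The plan is to combine the single-edge deletion formula in \lemref{lemcor2twopunion} with the trace-type formula for $\tg$ from \thmref{thmbasic2}, using the identity $\sum_i \frac{\li}{\li+\ri}=g$ from \eqnref{eqn genus} to collect the $\tg$ terms on the left-hand side.

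Concretely, I would start from \lemref{lemcor2twopunion}, which for each edge $e_i$ (and here every edge is non-bridge since $\ga$ is bridgeless) gives
\begin{equation*}
\ta{\ga}=\ta{\ga-e_i}+\frac{\li}{12}-\frac{\ri}{6}+\frac{A_{\pp,\qq,\ga-e_i}}{\li+\ri}.
\end{equation*}
Next I would multiply both sides by $\frac{\li}{\li+\ri}$ and sum over $i=1,\dots,e$. On the left, \eqnref{eqn genus} turns $\sum_i \frac{\li}{\li+\ri}\ta{\ga}$ into $g\cdot\tg$. On the right, the polynomial pieces simplify by the definitions in \eqnref{eqn defining z and r}: $\sum_i \frac{\li^2}{12(\li+\ri)}=\frac{z(\ga)}{12}$ and $\sum_i \frac{\li\ri}{6(\li+\ri)}=\frac{r(\ga)}{6}$. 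The crucial remaining sum is $\sum_i \frac{\li A_{\pp,\qq,\ga-e_i}}{(\li+\ri)^2}$, and this is exactly the quantity that \thmref{thmbasic2} evaluates as $\frac{\ell(\ga)}{12}-\tg$. Substituting, the identity becomes
\begin{equation*}
g\cdot \tg=\sum_{i=1}^{e}\frac{\li}{\li+\ri}\ta{\ga-e_i}+\frac{z(\ga)}{12}-\frac{r(\ga)}{6}+\frac{\ell(\ga)}{12}-\tg.
\end{equation*}

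Moving the last $\tg$ to the left gives $(g+1)\tg$. To finish, I would use $\ell(\ga)=z(\ga)+r(\ga)$ to rewrite the constant terms: $\frac{z(\ga)}{12}+\frac{\ell(\ga)}{12}-\frac{r(\ga)}{6}=\frac{2\ell(\ga)-r(\ga)}{12}-\frac{r(\ga)}{6}=\frac{\ell(\ga)}{6}-\frac{r(\ga)}{4}$. Dividing through by $g+1$ yields the claimed formula. The whole argument is essentially a weighted averaging over edges, with the bridgeless hypothesis entering exactly to ensure that \lemref{lemcor2twopunion} applies to every edge and that \thmref{thmbasic2} is available.

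The only step that is more than bookkeeping is the cancellation of the $A_{\pp,\qq,\ga-e_i}$ terms, and that cancellation is precisely what \thmref{thmbasic2} provides; so I do not anticipate any serious obstacle beyond being careful with the arithmetic of $z(\ga)$, $r(\ga)$, and $\ell(\ga)$ at the end.
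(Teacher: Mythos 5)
Your proposal is correct and follows exactly the paper's own argument: multiply the deletion formula of \lemref{lemcor2twopunion} by $\frac{\li}{\li+\ri}$, sum over edges using \eqnref{eqn genus}, eliminate the $A_{\pp,\qq,\ga-e_i}$ sum via \thmref{thmbasic2}, and simplify with $\ell(\ga)=z(\ga)+r(\ga)$. The arithmetic at the end checks out, so there is nothing to add.
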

\begin{proof}
Multiply both sides of the equation given in \lemref{lemcor2twopunion} by
$\frac{\li}{\li+\ri}$,
sum over all edges of $\ga$, and use the fact that
$\displaystyle \sum_{i=1}^{e}\frac{\li}{\li+\ri} = g$ (see \eqnref{eqn genus}) to obtain
\begin{equation*}
\begin{split}
g \cdot \ta{\ga} = \sum_{i=1}^{e} \frac{\li}{\li+\ri}\ta{\ga-e_{i}} +
\frac{z(\ga)}{12}-\frac{r(\ga)}{6}
+ \sum_{i=1}^{e}\frac{\li A_{p_{i},\qq,\ga-e_{i}}}{(\li+\ri)^2} .
\end{split}
\end{equation*}
Finally, we use \thmref{thmbasic2} and the fact that $z(\ga)+r(\ga)=\elg$ to complete the proof.
\end{proof}
As a corollary, we obtain a lower bound to the tau constant in terms of
the genus $g$.
\begin{corollary}\label{cor1 tau genus}
Let $\ga$ be a bridgeless metrized graph. Let edge $e_{i}$ have end points $\pp$ and $\qq$. For the
voltage function $j^{i}_{x}(\pp,\qq)$ on $\ga-e_{i}$, we have
\begin{equation*}
\begin{split}
\ta{\ga}= \frac{1}{4(g+1)}\sum_{e_i \in \ee{\ga}}
\frac{\li}{\li+\ri}\int_{\ga-e_{i}}(\ddx j^{i}_{x}(\pp,\qq))^2dx
+\frac{\ell(\ga)}{6(g+1)}.
\end{split}
\end{equation*}
In particular, $\tg \geq \frac{\ell(\ga)}{6(g+1)}$.
\end{corollary}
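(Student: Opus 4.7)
The plan is to combine \thmref{thm tau genus} with \thmref{thmbasic} applied to each subgraph $\ga - e_i$. Since $\ga$ is bridgeless, every $\ga - e_i$ is a connected metrized graph, so all the objects we need are well-defined (in particular $\ri < \infty$).

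First, I would apply \thmref{thmbasic} to $\ga - e_i$ with the choice $p = \pp$, $q = \qq$. Recalling that $r_{\ga-e_i}(\pp,\qq) = \ri$ by the very definition of $\ri$, this gives
\begin{equation*}
\ta{\ga-e_i} \;=\; \frac{1}{4}\int_{\ga-e_i}\Bigl(\ddx j^{i}_{x}(\pp,\qq)\Bigr)^{2}dx \;+\; \frac{\ri}{4}.
\end{equation*}
Substituting this expression for each $\ta{\ga-e_i}$ into the formula of \thmref{thm tau genus} yields
\begin{equation*}
\tg \;=\; \frac{1}{4(g+1)}\sum_{i=1}^{e}\frac{\li}{\li+\ri}\int_{\ga-e_i}\Bigl(\ddx j^{i}_{x}(\pp,\qq)\Bigr)^{2}dx \;+\; \frac{1}{4(g+1)}\sum_{i=1}^{e}\frac{\li \ri}{\li+\ri} \;+\; \frac{\ell(\ga)}{6(g+1)} \;-\; \frac{r(\ga)}{4(g+1)}.
\end{equation*}

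The key observation is that the middle sum is exactly $\frac{r(\ga)}{4(g+1)}$ by the definition of $r(\ga)$ in \eqnref{eqn defining z and r}, so it cancels with the last term. This delivers the stated identity. For the ``in particular'' assertion, the integrand $\bigl(\ddx j^{i}_{x}(\pp,\qq)\bigr)^{2}$ is non-negative and the weights $\frac{\li}{\li+\ri}$ are non-negative, so dropping the sum gives the lower bound $\tg \geq \frac{\ell(\ga)}{6(g+1)}$.

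There is no real obstacle here; the proof is a direct composition of two earlier identities. The only point to be slightly careful about is the bridgeless hypothesis: it is what makes \thmref{thm tau genus} applicable as a finite identity (no limiting values needed) and what ensures $\ri$ is finite so that \thmref{thmbasic} can be invoked on each $\ga - e_i$.
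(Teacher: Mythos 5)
Your proof is correct and is essentially identical to the paper's: both apply \thmref{thmbasic} to each $\ga-e_i$ with $p=\pp$, $q=\qq$ and substitute into \thmref{thm tau genus}, the middle sum $\sum_i \frac{\li\ri}{\li+\ri}$ cancelling against the $-\frac{r(\ga)}{4(g+1)}$ term. The paper leaves that cancellation implicit; you spell it out, which is a fine (and slightly clearer) presentation of the same argument.
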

\begin{proof}
Applying \thmref{thmbasic} to $\ga-e_i$ gives
$\ta{\ga-e_{i}}=\frac{1}{4}\int_{\ga-e_{i}}(\ddx
j^{i}_{x}(\pp,\qq))^2dx+\frac{\ri}{4}$ for any edge $e_{i}$. Thus,
we obtain what we want by substituting this into \thmref{thm tau genus}.
\end{proof}
Using the results in this section, we attempted to apply induction arguments to prove
\conjref{TauBound} without a satisfactory outcome. We beleive that improving the results on $A_{\pp,\qq,\ga-e_i}$ will make the induction arguments applicable.
\section{Contraction, deletion and contraction-deletion identities}\label{section cont and del identities}

In this section, we will prove a number of identities, which we call ``contraction identities", ``deletion identities" and
``contraction-deletion" identities. These identities are interesting in their own right. One way to relate these identities to the tau constant can be explained as follows:

We know the exact values of the tau constant when the metrized graph is a tree or circle (see \lemref{lem tau for tree and circle}). If a metrized graph has vertex connectivity $1$ or $2$, we can express its tau constant in terms of the tau constants of its subgraphs (see \remref{remcutvertex1} and \cite[Theorems 5.1 and 8.1]{C2}).
After a sequence of edge deletions and contractions we can pass to these type of graphs from an arbitrary metrized graph. In the previous section, we gave formulas expressing $\tg$ in terms of $\ta{\ga-e_i}$'s or $\ta{\oga_i}$'s by considering all edge deletions or contractions of depth $1$ (see \thmref{thm tau contract} and \thmref{thm tau genus}). One wonders if it is possible to generalize these two theorems with further depths of edge deletions and contractions. The solution is given by the identities shown in this section. The identities of this section have crucial roles in generalizing the results of the previous section, as in the following section where we deal with the successive edge contractions.

Some of these ``contraction identities", ``deletion identities" and
``contraction-deletion" identities were proven in \cite[Sections 3.6 and 3.7]{C1}
using different methods. Our approach in this paper is to utilize Euler's formula for homogeneous functions as in the proof of \thmref{thmbasic2} in \cite{C2}.

Let $\ga$ be a graph with edges $\ee{\ga}=\{e_1, e_2, \dots,
e_e\}$, and let $\ga-e_i$ be the graph obtained by deleting the $i$-th edge
$e_{i} \in \ee{\ga}$. As before $\li$ is the length of edge $e_i$. Let $\ga^{DA}$ be the graph
obtained from $\ga$ by replacing each edge $e_i \in \ee{\ga}$ by $2$
edges $e_{i,1}, e_{i,2}$ of equal lengths
$\frac{\li}{2}$. Here DA stands for ``Double Adjusted". Then,
$\vv{\ga}=\vv{\ga^{DA}}$ and $\ell(\ga)=\ell(\ga^{DA})$.

Given a graph $\ga$, we will compare $\tau$-constants of the
following graphs: $\ga$, $\ga-e_i$, $\ga^{DA}$, $(\ga-e_i)^{DA}$,
$\ga^{DA}-e_{i,j}$ and $\ga^{DA}-\{e_{i,1}, \; e_{i,2}\}$.
It will turn out that by doing so, we will obtain non-trivial identities for
$\tg$, $z(\ga)$ and $r(\ga)$.

The graphs in Figure \ref{fig deletionid} illustrate what we will do. Graph $I$ shows
$\ga$ with an edge $e_i \in \ee{\ga}$ labeled by $i$. $II$ shows
$\ga-e_i$, $III$ shows $(\ga-e_i)^{DA}$, $IV$ shows $\ga^{DA}$ with
edges $e_{i,1}$ and $e_{i,2}$ labeled by $i$ and $ii$, $V$ shows
$\ga^{DA}-e_{i,2}$ and $VI$ shows $\ga^{DA}-\{e_{i,1}, e_{i,2}\}$.
\begin{figure}
\centerline{\epsffile{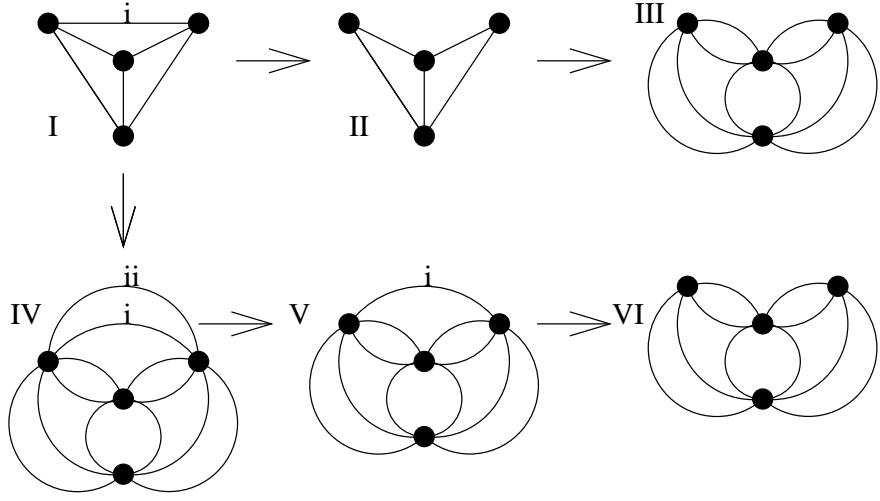}} \caption{Graphs used to obtain the deletion identities.} \label{fig deletionid}
\end{figure}

Note that $(\ga-e_i)^{DA}$ and
$\ga^{DA}-\{e_{i,1}, e_{i,2}\}$ are the same graphs. In \figref{fig deletionid}, they are the graphs in $III$ and $VI$.

Let $\ee{\ga}=\{e_1, \, e_2, \dots, e_e \}$, and let $\li$ be the
length of $e_i$. Then
\\$\ee{\ga^{DA}}=\{e_{1,1}, \, e_{1,2}, \,
e_{2,1}, \, e_{2,2}, \dots, e_{e,1}, \, e_{e,2}\}$. We write
$\ga^{d_i}:=\ga^{DA}-\{e_{i,1}, \, e_{i,2}\}$ to simplify the
notation.
\begin{theorem}\label{thm deletionid1}
Let $\ga$ be a bridgeless metrized graph. Given an edge $e_i \in \, \ee{\ga}$
with end points $\pp$ and $\qq$,
\[
\ta{\ga^{DA}}=\ta{(\ga-e_i)^{DA}}+\frac{2\li^2- \ri^2}{24(\li+\ri)}
+\frac{4}{\li+\ri}A_{\pp,\qq,\ga^{d_i}}.
\]
\end{theorem}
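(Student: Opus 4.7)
The plan is to iterate the single-edge deletion identity of \lemref{lemcor2twopunion} on the two parallel copies of $e_i$ inside $\ga^{DA}$, and then reduce the resulting intermediate $A$-term. Since $\ga$ is bridgeless, so is $\ga^{DA}$, and both $\ga^{DA}-e_{i,2}$ and $\ga^{DA}-\{e_{i,1},e_{i,2}\}=\ga^{d_i}$ are connected, so \lemref{lemcor2twopunion} applies at each step. First I would apply the lemma to $\ga^{DA}$ to delete $e_{i,2}$ (of length $\li/2$), which expresses $\ta{\ga^{DA}}$ in terms of $\ta{\ga^{DA}-e_{i,2}}$, $A_{\pp,\qq,\ga^{DA}-e_{i,2}}$, and the resistance $R'$ between $\pp,\qq$ in $\ga^{DA}-e_{i,2}$. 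Then I would apply the lemma to $\ga^{DA}-e_{i,2}$ to delete $e_{i,1}$, which expresses $\ta{\ga^{DA}-e_{i,2}}$ in terms of $\ta{\ga^{d_i}}$, $A_{\pp,\qq,\ga^{d_i}}$, and the resistance between $\pp,\qq$ in $\ga^{d_i}$. Adding these two identities eliminates $\ta{\ga^{DA}-e_{i,2}}$.

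Two explicit resistance computations feed into these formulas. Because $\ga^{d_i}$ is obtained from $\ga-e_i$ by replacing each edge with a pair of parallel half-length edges, every effective resistance in $\ga^{d_i}$ equals one quarter of the corresponding resistance in $\ga-e_i$; in particular, the resistance between $\pp$ and $\qq$ in $\ga^{d_i}$ is $\ri/4$. Because $\ga^{DA}-e_{i,2}$ is $\ga^{d_i}$ with an extra edge $e_{i,1}$ of length $\li/2$ placed in parallel between $\pp$ and $\qq$, a parallel-resistance computation gives $R'=\frac{(\li/2)(\ri/4)}{(\li/2)+(\ri/4)}=\frac{\li\ri}{2(2\li+\ri)}$. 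Substituting these values leaves an expression for $\ta{\ga^{DA}}-\ta{\ga^{d_i}}$ that is linear in the two $A$-terms with explicit rational coefficients in $\li$ and $\ri$.

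The main obstacle is to eliminate $A_{\pp,\qq,\ga^{DA}-e_{i,2}}$ in favor of $A_{\pp,\qq,\ga^{d_i}}$. Since $\ga^{DA}-e_{i,2}$ differs from $\ga^{d_i}$ only by the extra parallel edge $e_{i,1}$, current division shows that $j^{\ga^{DA}-e_{i,2}}_x(\pp,\qq)$ and $j^{\ga^{DA}-e_{i,2}}_\pp(x,\qq)$, when restricted to $\ga^{d_i}$, are explicit scalar multiples of the corresponding $\ga^{d_i}$-voltage functions, the scalars depending only on $\li,\ri$; on the added edge $e_{i,1}$ these voltage functions are linear in arc length with endpoint values controlled by $R'$. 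Splitting the integral defining $A_{\pp,\qq,\ga^{DA}-e_{i,2}}$ as $\int_{\ga^{d_i}}+\int_{e_{i,1}}$ and using these identities yields
\[
A_{\pp,\qq,\ga^{DA}-e_{i,2}} \;=\; \alpha(\li,\ri)\,A_{\pp,\qq,\ga^{d_i}} + \beta(\li,\ri)
\]
for explicit rational functions $\alpha,\beta$. Substituting this back and simplifying, with careful use of $\li/2+\ri/4=(2\li+\ri)/4$ so that the combined $A_{\pp,\qq,\ga^{d_i}}$-coefficient collapses to $\frac{4}{\li+\ri}$ and the remaining rational terms in $\li,\ri$ combine into $\frac{2\li^2-\ri^2}{24(\li+\ri)}$, produces the claimed identity. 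The critical step is thus the derivation of the scaling relation for $A_{\pp,\qq,\ga^{DA}-e_{i,2}}$; once this is established, the remainder of the argument is algebraic bookkeeping.
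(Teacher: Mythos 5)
Your route is essentially the paper's: apply \lemref{lemcor2twopunion} twice to peel off the two half-length copies of $e_i$ (the order $e_{i,1}$ versus $e_{i,2}$ is immaterial by symmetry), compute the two intermediate resistances $\tfrac{1}{2}\tfrac{\li\ri}{2\li+\ri}$ and $\tfrac{\ri}{4}$ exactly as the paper does, and then convert the intermediate term $A_{\pp,\qq,\ga^{DA}-e_{i,2}}$ into $A_{\pp,\qq,\ga^{d_i}}$. The paper simply quotes \cite[Lemma 8.6]{C2} for that conversion, namely $A_{\pp,\qq,\ga^{DA}-e_{i,1}}=\frac{4\li^2}{(2\li+\ri)^2}A_{\pp,\qq,\ga^{d_i}}+\frac{1}{24}\big(\frac{\li \ri}{2\li+\ri}\big)^2$, whereas you propose to derive it from scratch; that is the one place where your sketch has a genuine gap.

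Of the two voltage functions entering $A_{\pp,\qq,\ga^{DA}-e_{i,2}}=\int j_x(\pp,\qq)\,(\ddx j_{\pp}(x,\qq))^2\,dx$, only $j_{\pp}(x,\qq)$ rescales on $\ga^{d_i}$ by the current-division factor $\frac{2\li}{2\li+\ri}$. The other, $j_x(\pp,\qq)$, is \emph{not} a scalar multiple of its $\ga^{d_i}$-counterpart: the $Y$--$\Delta$ reduction with the parallel resistor of size $\li/2$ attached between $\pp$ and $\qq$ gives, for $x\in\ga^{d_i}$,
\[
j_x^{\ga^{DA}-e_{i,2}}(\pp,\qq)=j_x^{\ga^{d_i}}(\pp,\qq)+\frac{j^{\ga^{d_i}}_{\pp}(x,\qq)\,j^{\ga^{d_i}}_{\qq}(\pp,x)}{\li/2+\ri/4},
\]
an additive, $x$-dependent correction. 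Integrating it against $(\ddx j_{\pp}(x,\qq))^2$ produces a new integral that is not a multiple of $A_{\pp,\qq,\ga^{d_i}}$; one needs the further identity $\int_{G}j_p(x,q)\,j_q(p,x)\,(\ddx j_p(x,q))^2\,dx=\frac{r(p,q)^3}{6}$ (which follows from $j_p(x,q)+j_q(p,x)=r(p,q)$ and is part of the machinery of \cite{C2}) to evaluate it. That term contributes $\frac{\li^2\ri^3}{24(2\li+\ri)^3}$, which together with the edge integral over $e_{i,1}$ that you do account for, $\frac{\li^3\ri^2}{12(2\li+\ri)^3}$, sums to $\frac{1}{24}\big(\frac{\li\ri}{2\li+\ri}\big)^2$, recovering the cited lemma. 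As written, your ``scalar multiple'' claim would drop the first summand, the constant $\beta(\li,\ri)$ would come out wrong, and the final identity would not close. Everything else in the proposal is correct and coincides with the paper's argument.
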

\begin{proof}
By applying \lemref{lemcor2twopunion} to $\ga^{DA}$ for the edge
$e_{i,1} \in \ee{\ga^{DA}}$ and using $R_{i,1}(\ga^{DA})=\frac{1}{2}
\frac{\li\ri}{2\li+\ri}$ from \cite[Lemma 3.10  with $n=2$]{C2}, we
get
\begin{equation}\label{eqndelid1}
\begin{split}
\ta{\ga^{DA}} = \ta{\ga^{DA}-e_{i,1}} +\frac{\li}{24}-\frac{1}{12}\frac{\li\ri}{2\li+\ri}+
\frac{A_{p_{i},\qq,\ga^{DA}-e_{i,1}}}{\frac{\li}{2}+\frac{1}{2}\frac{\li\ri}{2\li+\ri}}.
\end{split}
\end{equation}
By applying \cite[Lemma 8.6]{C2} to $A_{\pp,\qq,\ga^{DA}-e_{i,1}}$
with edge $e_{i,2}$, we obtain
\begin{equation}\label{eqndelid2}
\begin{split}
A_{\pp,\qq,\ga^{DA}-e_{i,1}}=\frac{4\li^2 A_{\pp,\qq,\ga^{d_i}}}{(2\li+\ri)^2}+\frac{1}{24}(\frac{\li \ri}{2\li+\ri})^2.
\end{split}
\end{equation}
Next, applying \lemref{lemcor2twopunion} to $\ga^{DA}-e_{i,1}$ with
respect to the edge $e_{i,2}$ and using
$R_{i,2}(\ga^{DA}-e_{i,1})=\frac{\ri}{4}$ gives
\begin{equation}\label{eqndelid3}
\begin{split}
\ta{\ga^{DA}-e_{i,1}}=\ta{\ga^{d_i}}+\frac{\li}{24}-\frac{\ri}{24}
+\frac{4A_{\pp,\qq,\ga^{d_i}}}{2\li+\ri}.
\end{split}
\end{equation}
We also note that $\ga^{d_i}=(\ga-e_i)^{DA}$.

Substituting Equations (\ref{eqndelid2}) and (\ref{eqndelid3}) into
\eqnref{eqndelid1} gives the result.
\end{proof}
\begin{notation} Let $\ga$ be a bridgeless metrized graph. Then for any $e_i \in \ee{\ga}$, we set
$$K_i(\ga):=\sum_{\substack{e_j \in \; \ee{\ga}\\ j\not=i}}\frac{L_j^2}{L_j+R_j}
-\sum_{e_j \in \; \ee{\ga-e_i}}\frac{L_j^2}{L_j+R_j(\ga-e_i)}.$$
\end{notation}
\begin{remark}\label{remdel1}
Let $\ga$ be a metrized graph and let $e_i \in \ee{\ga}$. For every $j \not=i$ and $j \in \{1, 2, \dots, e\}$,
$R_j(\ga-e_i) \geq R_j$ by Rayleigh's Cutting law, which states that cutting branches can only increase the effective resistance between any two points in a circuit (See \cite{DS} for more information). Therefore, $\frac{L_j^2}{L_j+R_j(\ga-e_i)} \leq
\frac{L_j^2}{L_j+R_j}$. Hence, $K_i(\ga) \geq 0$.
\end{remark}
\begin{theorem}\label{thm deletionid2}
Let $\ga$ be a bridgeless metrized graph. For any edge $e_i \in \, \ee{\ga}$ with end points $\pp$ and $\qq$,
\[
\frac{A_{\pp,\qq,\ga-e_i}}{\li+\ri}=\frac{16A_{\pp,\qq,\ga^{d_i}}}{\li+\ri}-\frac{K_i(\ga)}{6}.
\]
\end{theorem}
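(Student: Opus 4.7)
My plan is to combine Theorem~4.1 with Lemma~3.3 (i.e., Corollary~5.3 of \cite{C2}) applied to $\ga$ and the edge $e_i$, thereby reducing Theorem~4.2 to an identity purely among tau constants. Theorem~4.1 rearranges to
\[
\frac{16\, A_{\pp,\qq,\ga^{d_i}}}{\li+\ri}=4\bigl(\ta{\ga^{DA}}-\ta{\ga^{d_i}}\bigr)-\frac{2\li^2-\ri^2}{6(\li+\ri)},
\]
while Lemma~3.3 applied to $\ga$ and $e_i$ gives
\[
\frac{A_{\pp,\qq,\ga-e_i}}{\li+\ri}=\tg-\ta{\ga-e_i}-\frac{\li}{12}+\frac{\ri}{6}.
\]
Subtracting these and rearranging, the identity claimed in Theorem~4.2 becomes equivalent to the reduced tau-constant identity
\[
\tg-\ta{\ga-e_i}-4\ta{\ga^{DA}}+4\ta{\ga^{d_i}}=\frac{\li}{12}-\frac{\ri}{6}-\frac{2\li^2-\ri^2}{6(\li+\ri)}-\frac{K_i(\ga)}{6}.
\]

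To establish this reduced identity, I would apply Theorem~2.7 (thmbasic2) to each of the four graphs $\ga$, $\ga-e_i$, $\ga^{DA}$, and $\ga^{d_i}$ (enlarging vertex sets and using the additive property of $\tau$ to handle any bridges in $\ga-e_i$), expressing every tau constant as $\ell/12$ minus a weighted sum of $A$-integrals over edge-deletions. For the doubled graphs $\ga^{DA}$ and $\ga^{d_i}$, I would then use equation~(\ref{eqndelid2}) (the \cite[Lemma 8.6]{C2} application from Theorem~4.1's proof) to rewrite each $A_{p_j,q_j,\ga^{DA}-e_{j,k}}$ as a linear expression in $A_{p_j,q_j,\ga^{d_j}}$ together with an explicit $L_j R_j^2/(24(L_j+R_j)^2)$-type correction, and analogously for $\ga^{d_i}$ in place of $\ga^{DA}$.

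The main obstacle will be the algebraic bookkeeping: the combined expression involves $A$-integrals over all edges $j$ of both $\ga$ and $\ga-e_i$, and I must show that the contributions from $j\ne i$ assemble precisely into the sum $\sum_{j\ne i}\bigl[L_j^2/(L_j+R_j(\ga))-L_j^2/(L_j+R_j(\ga-e_i))\bigr]$ defining $K_i(\ga)$, with no residual $A$-integral cross-terms surviving. The remaining $\li,\ri$-dependent constants should then match the right-hand side of the reduced identity after straightforward simplification, the key enabler being the \cite[Lemma 8.6]{C2} reduction that converts every $A$-integral on a doubled graph back into one on a smaller doubled graph plus explicit correction terms. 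Once this cancellation is carried out cleanly, Theorem~4.2 follows from the equivalence established in the first paragraph.
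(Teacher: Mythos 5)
Your opening reduction is sound and in fact coincides with the paper's own strategy: the paper likewise substitutes into Theorem~\ref{thm deletionid1} and compares the outcome with Lemma~\ref{lemcor2twopunion}, and your ``reduced tau-constant identity'' is arithmetically correct. The gap is in how you propose to prove that reduced identity. If you expand all four tau constants via Theorem~\ref{thmbasic2} and push the doubled-graph integrals through \cite[Lemma 8.6]{C2} (your equation~(\ref{eqndelid2})), then for each edge $e_j$ you are left with a term $\frac{L_j A_{p_j,q_j,\ga-e_j}}{(L_j+R_j)^2}$ coming from $\tg$ and a term $\frac{16 L_j A_{p_j,q_j,(\ga-e_j)^{DA}}}{(L_j+R_j)^2}$ coming from $4\ta{\ga^{DA}}$ (and the analogous pair from $\ta{\ga-e_i}$ versus $4\ta{\ga^{d_i}}$). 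These are $A$-integrals over \emph{different} graphs, $\ga-e_j$ versus its double $(\ga-e_j)^{DA}$, and they do not cancel against one another; the relation between $A_{p,q,\gamma}$ and $16A_{p,q,\gamma^{DA}}$ is precisely the content of Theorem~\ref{thm deletionid2} itself (in the form of Corollary~\ref{cor deletionid2}). So the hoped-for assembly ``with no residual $A$-integral cross-terms surviving'' fails, and the argument as proposed is circular.

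The missing ingredient is \cite[Corollary 3.5]{C2}, which gives directly $\ta{\gamma^{DA}}=\frac{\ell(\gamma)}{48}+\frac{\ta{\gamma}}{4}+\frac{z(\gamma)}{24}$ for a metrized graph $\gamma$. Applying this to $\gamma=\ga$ and to $\gamma=\ga-e_i$ (recalling $\ga^{d_i}=(\ga-e_i)^{DA}$ and $\ell(\ga-e_i)=\ell(\ga)-\li$) yields $\tg-\ta{\ga-e_i}-4\ta{\ga^{DA}}+4\ta{\ga^{d_i}}=-\frac{\li}{12}-\frac{1}{6}\bigl(z(\ga)-z(\ga-e_i)\bigr)$, and since $z(\ga)-z(\ga-e_i)=\frac{\li^2}{\li+\ri}+K_i(\ga)$ by the definition of $K_i(\ga)$, this is exactly your reduced identity. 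With that citation in place of the Theorem~\ref{thmbasic2} bookkeeping, your argument closes and becomes essentially the paper's proof.
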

\begin{proof}
Note that $\ell(\ga^{DA})=\ell(\ga)$. Applying \cite[Corollary 3.5]{C2} to $\ga^{DA}$, we obtain
\begin{equation}\label{eqndelid4}
\ta{\ga^{DA}}=\frac{\ell(\ga)}{48} + \frac{\ta{\ga}}{4}+\frac{z(\ga)}{24}.
\end{equation}
Applying \cite[Corollary 3.5]{C2} to $(\ga-e_i)^{DA}$, we obtain
\begin{equation}\label{eqndelid5}
\ta{(\ga-e_i)^{DA}}=\frac{\ell(\ga-e_i)}{48} + \frac{\ta{\ga-e_i}}{4}+\frac{z(\ga-e_i)}{24}.
\end{equation}
Substituting \eqnref{eqndelid4} and \eqnref{eqndelid5} into
\thmref{thm deletionid1}, and recalling that
$\ell(\ga-e_i)=\ell(\ga)-\li$ gives
\begin{equation}\label{eqndelid6}
\begin{split}
\tg =\ta{\ga-e_i}+\frac{\li}{12}-\frac{\ri}{6}-\frac{K_i(\ga)}{6}+\frac{16A_{\pp,\qq,\ga^{d_i}}}{\li+\ri}.
\end{split}
\end{equation}
Comparing \eqnref{eqndelid6} with \lemref{lemcor2twopunion} gives the result.
\end{proof}
Let $p$, $q$  be any two points in $\ga$, and let $e_{0}$
be a line segment of length $L$. By identifying the end points of
$e_{0}$ with p and q of $\ga$ we obtain a new graph which we
denote by $\ga_{(p,q)}$. Then
$\ell(\ga_{(p,q)})=\ell(\ga)+L$. Also, by identifying p and q
with each other in $\ga$ we obtain a graph which we denote by
$\ga_{pq}$. Then $\ell(\ga_{pq})=\ell(\ga)$. If $p$ and $q$ are end
points of an edge $e_i \in \ga$, then $\ga_{pq}=\tga_i$.
\begin{lemma}\cite[Corollaries 7.1 and 7.2]{C2}\label{lemcoradding1}
Let $\ga$ be a metrized graph with resistance function
$r(x,y)$. For $p$, $q$, $\ga_{(p,q)}$, and $\ga_{pq}$ as given above,
$$\ta{\ga_{(p,q)}}=\tg+\frac{L}{12}-\frac{r(p,q)}{6}+\frac{A_{p,q,\ga}}{L+r(p,q)},
\qquad \ta{\ga_{pq}}=\tg-\frac{r(p,q)}{6}+\frac{A_{p,q,\ga}}{r(p,q)}.$$
\end{lemma}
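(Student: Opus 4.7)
The plan is to recognize both identities as single edge operations applied to the auxiliary graph $\ga_{(p,q)}$. First, enlarge the vertex set of $\ga$ so that $p$ and $q$ are vertices (permitted by the valence property, \remref{remvalence}), and assume $p \neq q$. Then $\ga_{(p,q)}$ differs from $\ga$ by one extra edge $e_0$ of length $L$ joining $p$ and $q$, and two key observations hold: $\ga_{(p,q)} - e_0 = \ga$ as metrized graphs (so the resistance $R_0$ between the endpoints of $e_0$ in $\ga_{(p,q)}-e_0$ equals $r(p,q)$, and $A_{p,q,\ga_{(p,q)}-e_0} = A_{p,q,\ga}$), and the edge contraction $\overline{\ga_{(p,q)}}_0$ is precisely $\ga_{pq}$.

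With these identifications, the first formula falls out of \lemref{lemcor2twopunion} applied to $\ga_{(p,q)}$ at the edge $e_0$: the hypothesis that $\ga_{(p,q)} - e_0$ is connected holds since $\ga$ is connected, and substituting $\li \leadsto L$, $\ri \leadsto r(p,q)$, $\ta{\ga-e_i} \leadsto \tg$, and $A_{p_i,q_i,\ga-e_i} \leadsto A_{p,q,\ga}$ immediately produces the stated expression for $\ta{\ga_{(p,q)}}$.

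For the second formula, apply \lemref{lemcontract1} to $\ga_{(p,q)}$ at the edge $e_0$, using $\overline{\ga_{(p,q)}}_0 = \ga_{pq}$:
\[
\ta{\ga_{(p,q)}} \;=\; \ta{\ga_{pq}} + \frac{L}{12} - \frac{L \cdot A_{p,q,\ga}}{r(p,q)\bigl(L+r(p,q)\bigr)}.
\]
Solve for $\ta{\ga_{pq}}$ and substitute the first formula. The two $\frac{L}{12}$ terms cancel, and the coefficient of $A_{p,q,\ga}$ collapses via
\[
\frac{1}{L+r(p,q)} + \frac{L}{r(p,q)\bigl(L+r(p,q)\bigr)} \;=\; \frac{r(p,q)+L}{r(p,q)\bigl(L+r(p,q)\bigr)} \;=\; \frac{1}{r(p,q)},
\]
giving the second identity. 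There is no serious obstacle here — the argument is a direct bookkeeping application of the contraction and deletion lemmas already established; the only point requiring minor care is the degenerate case $p=q$ for the second formula, which is excluded since $r(p,q)$ would be zero and the statement is vacuous.
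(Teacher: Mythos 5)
Your derivation is correct: applying \lemref{lemcor2twopunion} to $\ga_{(p,q)}$ at $e_0$ (with $R_0=r(p,q)$ and $\ga_{(p,q)}-e_0=\ga$) gives the first identity, and combining with \lemref{lemcontract1} via $\overline{(\ga_{(p,q)})}_0=\ga_{pq}$ gives the second, with the algebra checking out. Note, however, that the paper itself offers no proof of this statement --- it is quoted verbatim as \cite[Corollaries 7.1 and 7.2]{C2} --- so there is no internal argument to compare against; your route through the two other cited lemmas is non-circular (both are independent imports from the same source) and is essentially the standard way those corollaries are obtained there.
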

The following corollary is the initial step towards the contraction-deletion identities (Theorems \ref{thm contraction-deletion1} and \ref{thm cont-del for z(ga)}).
\begin{corollary}\label{cor deletionid2}
Let $\ga$ be a metrized graph with resistance function
$r(x,y)$, and let $p$, $q$, $e_0$ and $\ga_{(p,q)}$ be as above. Corresponding to the
edge $e_0$, suppose that we have the pair of edges $e_{0,1}$ and $e_{0,2}$ in $\ee{(\ga_{(p,q)})^{DA}}$. Then we have
$$\frac{A_{p,q,\ga}}{L+r(p,q)}=\frac{16 A_{p,q,\ga^{DA}}}{L+r(p,q)}-\frac{1}{6}
\Big(\sum_{\substack{e_j \in \; \ee{\ga_{(p,q)}}\\e_j\not=e_0}}\frac{L_j^2}{L_j+R_j(\ga_{(p,q)})}
-\sum_{e_i \in \; \ee{\ga}}\frac{L_i^2}{L_i+R_i} \Big).$$
\end{corollary}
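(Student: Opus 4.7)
The approach is to apply \thmref{thm deletionid2} to the augmented graph $\ga_{(p,q)}$, treating the added edge $e_0$ as the distinguished edge $e_i$ of that theorem. Once this specialization is set up, every ingredient appearing on the right-hand side of \thmref{thm deletionid2} reorganizes, term by term, into the expression given in the corollary. So the corollary is essentially \thmref{thm deletionid2} rewritten in the language of an added edge rather than a deleted one, and no genuinely new computation should be required.

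Before invoking \thmref{thm deletionid2}, I would check that $\ga_{(p,q)}$ is bridgeless (the sums on the right-hand side of the corollary already tacitly require $\ga$ to be bridgeless, which I take as the implicit hypothesis). The edge $e_0$ is not a bridge, since $\ga_{(p,q)} - e_0 = \ga$ is connected; and for any other edge $e_j \in \ee{\ga}$, a cycle of $\ga$ through $e_j$ is still a cycle in the larger graph $\ga_{(p,q)}$, so $e_j$ remains a non-bridge.

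Next I would translate each piece of \thmref{thm deletionid2} back into the notation of $\ga$. With $e_0$ in the role of $e_i$, its length is $L$ and its exterior resistance in $\ga_{(p,q)}$ is $R_0(\ga_{(p,q)}) = r(p,q)$, because the resistance between the end points of $e_0$ in $\ga_{(p,q)} - e_0 = \ga$ is exactly $r(p,q)$. This immediately gives $A_{p,q,\ga_{(p,q)} - e_0} = A_{p,q,\ga}$. For the doubled graph, $(\ga_{(p,q)})^{d_0} = (\ga_{(p,q)})^{DA} - \{e_{0,1}, e_{0,2}\}$ coincides with $\ga^{DA}$, apart from an orphaned midpoint of $e_0$ that contributes nothing to the integral defining $A_{p,q,\cdot}$; hence $A_{p,q,(\ga_{(p,q)})^{d_0}} = A_{p,q,\ga^{DA}}$. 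Finally, since $\ga_{(p,q)} - e_0 = \ga$, the quantity $K_0(\ga_{(p,q)})$ is by its very definition equal to
\[
\sum_{\substack{e_j \in \ee{\ga_{(p,q)}}\\ e_j \neq e_0}} \frac{L_j^2}{L_j + R_j(\ga_{(p,q)})} \ - \ \sum_{e_i \in \ee{\ga}} \frac{L_i^2}{L_i + R_i},
\]
which is precisely the bracketed expression in the statement of the corollary.

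Substituting these identifications into \thmref{thm deletionid2} yields the claimed formula in one line. The only place calling for a brief comment is the identification $(\ga_{(p,q)})^{d_0} = \ga^{DA}$ up to isolated points, but this is routine and does not affect the integral $A_{p,q,\cdot}$. I therefore do not anticipate any serious obstacle: the corollary is the natural specialization of \thmref{thm deletionid2} obtained by viewing "deleting $e_0$ from $\ga_{(p,q)}$" as the same operation as "adding $e_0$ to $\ga$ between $p$ and $q$."
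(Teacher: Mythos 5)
Your proposal is correct and is essentially identical to the paper's own proof: the paper likewise applies Theorem~\ref{thm deletionid2} to $\ga_{(p,q)}$ with the edge $e_0$ and then identifies $\ga_{(p,q)}-e_0=\ga$ and $(\ga_{(p,q)})^{DA}-\{e_{0,1},e_{0,2}\}=\ga^{DA}$. Your extra remarks on bridgelessness and on $R_0(\ga_{(p,q)})=r(p,q)$ only make explicit what the paper leaves tacit.
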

\begin{proof}
\thmref{thm deletionid2} applied to $\ga_{(p,q)}$ with edge $e_0$ gives
$\frac{A_{p,q,\ga_{(p,q)}-e_0}}{L+r(p,q)}=\frac{16 A_{p,q,(\ga_{(p,q)})^{DA}-\{e_{0,1}, \, e_{0,2}\}}}{L+r(p,q)}-\frac{1}{6}
\Big(\sum_{\substack{e_j \in \; \ee{\ga_{(p,q)}}\\e_j\not=e_0}}\frac{L_j^2}{L_j+R_j(\ga_{(p,q)})}
-\sum_{e_i \in \; \ee{\ga_{(p,q)}-e_0}}\frac{L_i^2}{L_i+R_i} \Big).$
On the other hand, we have $\ga_{(p,q)}-e_0=\ga$ and $(\ga_{(p,q)})^{DA}-\{e_{0,1}, \, e_{0,2}\}=\ga^{DA}$. This gives the result.
\end{proof}
Let $\ga'_{(p,q)}$ be a metrized graph obtained from $\ga$ by connecting the points $p$ and $q$ of $\ga$ with line segment $e_0'$ of length $L'$. Then, $\ga_{(p,q)}-e_0=\ga'_{(p,q)}-e_0'$. Let $L=t_1 \cdot r(p,q)$ and $L'=t_2 \cdot r(p,q)$ for some positive real numbers $t_1$ and $t_2$. By applying \corref{cor deletionid2} to $\ga_{(p,q)}$ and $\ga'_{(p,q)}$, we obtain
\begin{equation}\label{eqn cor deletionid2}
\begin{split}
&(1+t_1) \Big(z(\ga_{(p,q)})-\frac{L^2}{L+r(p,q)}-z(\ga) \Big) =
(1+t_2) \Big(z(\ga'_{(p,q)})-\frac{(L')^2}{L'+r(p,q)}-z(\ga) \Big).
\end{split}
\end{equation}
As $t_2 \rightarrow 0$, we have $L'\rightarrow 0$ and $\ga'_{(p,q)} \rightarrow \ga_{pq}$, and
so $z(\ga'_{(p,q)})\rightarrow z(\ga_{pq})$. We substitute $t_1=\frac{L}{r(p,q)}$ into \eqnref{eqn cor deletionid2}. Then we obtain the following relation as $t_2 \rightarrow 0$:
\begin{equation}\label{eqn cor deletionid3}
\begin{split}
z(\ga_{(p,q)})=\frac{L^2}{L+r(p,q)}+\frac{L}{L+r(p,q)}z(\ga)+\frac{r(p,q)}{L+r(p,q)}z(\ga_{pq}).
\end{split}
\end{equation}
We use \eqnref{eqn cor deletionid3} to obtain the following Theorem:
\begin{theorem}\label{thm cont-del for z(ga)}
Let $\ga$ be a metrized graph. For each edge $e_i \in \ee{\ga}$ such that $\ga-e_i$ is connected, we have
$$z(\ga)=\frac{\li^2}{\li+\ri}+\frac{\li}{\li+\ri}z(\ga-e_i)+\frac{\ri}{\li+\ri}z(\oga_i).$$
 \end{theorem}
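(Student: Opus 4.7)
My plan is to obtain the identity as a direct specialization of Equation (\ref{eqn cor deletionid3}), which the paper has just established. Specifically, I will apply that equation with the base graph taken to be $\ga - e_i$ (which is connected by hypothesis), the two distinguished points taken to be $p := p_i$ and $q := q_i$, and the length of the attached segment taken to be $L := L_i$.

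The three pieces then match the statement of the theorem directly. Since $\ga - e_i$ is connected, the resistance $r_{\ga-e_i}(p_i,q_i)$ is finite and, by the very definition of $R_i$ in Section \ref{sec metrizedgraph}, equals $R_i$. The ``added-edge'' graph $(\ga-e_i)_{(p_i,q_i)}$ is obtained by attaching a new segment of length $L_i$ between $p_i$ and $q_i$ in $\ga-e_i$; this simply restores the deleted edge, so this graph is $\ga$ itself, and therefore $z((\ga-e_i)_{(p_i,q_i)})=z(\ga)$. The ``identified-endpoints'' graph $(\ga-e_i)_{p_i q_i}$ is obtained by gluing $p_i$ to $q_i$ in $\ga-e_i$, which by the definition of contraction in Section \ref{sec edge cont and del} coincides with $\oga_i$, and therefore $z((\ga-e_i)_{p_i q_i})=z(\oga_i)$. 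Substituting these three identifications into Equation (\ref{eqn cor deletionid3}) yields the claimed formula with no further computation.

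There is essentially no serious obstacle, only a routine verification that the specialization is legitimate. The main thing to check is that the derivation of Equation (\ref{eqn cor deletionid3}), which runs through Corollary \ref{cor deletionid2} and Theorem \ref{thm deletionid2}, remains valid in our setting; tracing it, the only real hypothesis used is that the attached segment $e_0$ is not a bridge in the enlarged graph $\ga_{(p,q)}$. In our specialization, the enlarged graph is $\ga$ and the role of $e_0$ is played by $e_i$, which is a non-bridge precisely because $\ga - e_i$ is assumed connected. Thus the application goes through, and the identity follows. (The boundary case of $e_i$ being a self-loop is harmless: one has $p_i = q_i$ and $R_i = 0$, and the formula degenerates correctly to $z(\ga) = L_i + z(\ga - e_i)$, consistent with $\oga_i = \ga - e_i$ in that situation.)
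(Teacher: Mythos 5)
Your proposal is correct and is essentially identical to the paper's own proof, which likewise obtains the theorem by specializing Equation (\ref{eqn cor deletionid3}) with $\ga_{(p,q)}$ replaced by $\ga$, $L$ by $\li$, and the base graph by $\ga-e_i$, so that $r(p,q)=\ri$ and $\ga_{pq}=\oga_i$. Your additional checks (that $e_i$ being a non-bridge is exactly what the derivation requires, and the self-loop degeneration) are sound and only make explicit what the paper leaves implicit.
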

\begin{proof}
In \eqnref{eqn cor deletionid3}, replace $\ga_{(p,q)}$ by $\ga$, $L$ by $\li$, $\ga$ by $\ga-e_i$.
This gives what we wanted to show.
\end{proof}
We call the identity in \thmref{thm cont-del for z(ga)} the contraction-deletion identity for $z(\ga)$.

If $e_i$ is a bridge (i.e., $\ri=\infty$), $z(\ga)=z(\oga_i)$, which can also be seen from \thmref{thm cont-del for z(ga)} as $\ri\rightarrow \infty$.

Moreover, for any metrized graph $\ga$ and for each edge $e_i \in \ee{\ga}$ such that $\ga-e_i$ is connected, we obtain the expression below for $K_i(\ga)$ by using its definition and \thmref{thm cont-del for z(ga)}:
\begin{equation}\label{eqn cor deletionid4}
\begin{split}
K_i(\ga)=\frac{\ri}{\li+\ri}\Big(z(\oga_i)-z(\ga-e_i) \Big).
\end{split}
\end{equation}

A function $f: \RR^n \rightarrow \RR$ is called homogeneous of degree $k$ if $f(\lambda
x_1,\lambda x_2, \cdots, \lambda x_n)=\lambda^k f(x_1,x_2, \cdots,
x_n)$ for $\lambda > 0$. A continuously differentiable function $f: \RR^n \rightarrow
\RR$ which is homogeneous of degree $k$ has the following property:
\begin{equation}\label{eqn Euler}
k \cdot f=\sum_{i=1}^n x_i \frac{\partial f}{\partial x_i} .
\end{equation}
Equation (\ref{eqn Euler}) is called Euler's formula.

For a given metrized graph $\ga$ with $\# (\ee{\ga})=e$, let $\{L_1, L_2,
\cdots, L_e \}$ be the edge lengths. Then $z:\RR^e_{>0} \rightarrow
\RR$ given by $z(L_1, L_2, \cdots, L_e)=z(\ga)$ is a continuously
differentiable homogeneous function of degree $1$, when we consider
all possible length distributions without changing the topology of
the graph $\ga$.
\begin{lemma}\label{lem diff2}
Let $\ga$ be a metrized graph, and let $e_i \in \ee{\ga}$ be of length $L_i$ such that $\ga-e_i$ is connected. Then we have
$$\frac{\partial z(\ga)}{\partial L_i} =\frac{\li (\li +2 \ri)}{(\li+\ri)^2}+\frac{\ri}{(\li+\ri)^2}z(\ga-e_i)-\frac{\ri}{(\li+\ri)^2}z(\oga_i).$$
\end{lemma}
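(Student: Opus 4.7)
The plan is to differentiate the contraction–deletion identity for $z(\ga)$ (Theorem~\ref{thm cont-del for z(ga)}) directly with respect to $\li$. The crucial point is that on the right-hand side of
\[
z(\ga)=\frac{\li^2}{\li+\ri}+\frac{\li}{\li+\ri}z(\ga-e_i)+\frac{\ri}{\li+\ri}z(\oga_i),
\]
the quantities $\ri$, $z(\ga-e_i)$, and $z(\oga_i)$ are constants as functions of $\li$: the graph $\ga-e_i$ has $\li$ removed, so its edge lengths and effective resistances (including $\ri$, which is the $(p_i,q_i)$ resistance in $\ga-e_i$) involve only the $L_j$ with $j\neq i$; and $\oga_i$ is obtained from $\ga$ by contracting $e_i$, so its edge data likewise depends only on the $L_j$ with $j\neq i$. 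Hence $\li$ appears only in the explicit coefficients $\frac{\li^2}{\li+\ri}$, $\frac{\li}{\li+\ri}$, and $\frac{\ri}{\li+\ri}$.

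The first step is to record the elementary derivatives
\[
\frac{\partial}{\partial \li}\frac{\li^{2}}{\li+\ri}=\frac{\li(\li+2\ri)}{(\li+\ri)^{2}},\qquad
\frac{\partial}{\partial \li}\frac{\li}{\li+\ri}=\frac{\ri}{(\li+\ri)^{2}},\qquad
\frac{\partial}{\partial \li}\frac{\ri}{\li+\ri}=-\frac{\ri}{(\li+\ri)^{2}},
\]
treating $\ri$ as a constant. The second step is simply to multiply the second and third derivatives by $z(\ga-e_i)$ and $z(\oga_i)$ respectively and add everything together; the stated formula drops out immediately.

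The only substantive point to justify carefully is the claim that $\ri$, $z(\ga-e_i)$, and $z(\oga_i)$ are independent of $\li$, which is really just unwinding definitions. Since $\ga$ is connected after removing $e_i$ (by hypothesis), all the terms are finite and there are no boundary cases to worry about, so no analytic subtlety arises. Hence there is no real obstacle: once the contraction–deletion identity of Theorem~\ref{thm cont-del for z(ga)} is in hand, the lemma is a one-line differentiation.
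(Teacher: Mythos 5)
Your proposal is correct and follows exactly the paper's own argument: differentiate the contraction--deletion identity of Theorem~\ref{thm cont-del for z(ga)} with respect to $\li$, using the observation that $\ri$, $z(\ga-e_i)$, and $z(\oga_i)$ do not depend on $\li$. The only difference is that you write out the elementary derivatives explicitly, which the paper leaves to the reader.
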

\begin{proof}
Note that $z(\oga_i)$, $z(\ga-e_i)$ and $\ri$ are independent of $\li$. Thus, taking the partial derivatives of the both sides of the identity in \thmref{thm cont-del for z(ga)} with respect to $\li$ gives the result.
\end{proof}
\begin{theorem}\label{thm diff1}
Let $\ga$ be a bridgeless metrized graph. Then we have
$$\sum_{e_i \in \ee{\ga}}\frac{\li K_i(\ga)}{\li+\ri}= \sum_{e_i \in \ee{\ga}}\frac{\li \ri}{(\li+\ri)^2}
\big(z(\oga_i)-z(\ga-e_i) \big)=\sum_{e_i \in \ee{\ga}}\frac{\li^2 \ri}{(\li+\ri)^2}.$$
\end{theorem}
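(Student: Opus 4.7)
The first equality is almost immediate: by \eqnref{eqn cor deletionid4} (which is valid because $\ga$ is bridgeless, hence $\ga-e_i$ is connected for every $i$), we have $K_i(\ga)=\frac{\ri}{\li+\ri}\bigl(z(\oga_i)-z(\ga-e_i)\bigr)$, so multiplying by $\frac{\li}{\li+\ri}$ and summing over $i$ gives the first equality term-by-term.

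The heart of the proof is the second equality, which I plan to obtain by applying Euler's formula \eqnref{eqn Euler} to $z(\ga)$. Since $z:\RR^e_{>0}\to\RR$ viewed as a function of the edge lengths $(L_1,\dots,L_e)$ is continuously differentiable and homogeneous of degree $1$ (all summands $\frac{\li^2}{\li+\ri}$ scale linearly under a common rescaling of the $\li$'s, as the $\ri$'s are themselves homogeneous of degree $1$ in the lengths), Euler's formula yields
\[
z(\ga)=\sum_{i=1}^{e} \li\,\frac{\partial z(\ga)}{\partial \li}.
\]
Substituting the expression from \lemref{lem diff2} gives
\[
z(\ga)=\sum_{i=1}^{e}\frac{\li^{2}(\li+2\ri)}{(\li+\ri)^{2}}\;-\;\sum_{i=1}^{e}\frac{\li\ri}{(\li+\ri)^{2}}\bigl(z(\oga_i)-z(\ga-e_i)\bigr).
\]
Using $z(\ga)=\sum_i \frac{\li^{2}}{\li+\ri}$ from \eqnref{eqn defining z and r} and the elementary identity $\li^{2}(\li+2\ri)-\li^{2}(\li+\ri)=\li^{2}\ri$, the first sum on the right simplifies so that
\[
\sum_{i=1}^{e}\frac{\li\ri}{(\li+\ri)^{2}}\bigl(z(\oga_i)-z(\ga-e_i)\bigr)=\sum_{i=1}^{e}\frac{\li^{2}\ri}{(\li+\ri)^{2}},
\]
which is exactly the second equality.

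The main conceptual step, and the only nontrivial one, is verifying that the hypotheses of Euler's formula apply to $z$ as a function of the edge lengths: that $z$ is $C^{1}$ in the $\li$'s and homogeneous of degree $1$. Homogeneity is clear from the definition (each $\ri$ is an effective resistance in $\ga-e_i$, hence homogeneous of degree $1$ in the remaining $L_j$'s, so each summand $\frac{\li^{2}}{\li+\ri}$ is homogeneous of degree $1$). Smoothness in each $\li$ away from $\li=0$ follows from the same kind of analysis used in the derivation of \lemref{lem diff2}, which relies on the contraction-deletion identity \thmref{thm cont-del for z(ga)}; the bridgeless hypothesis ensures $\ri<\infty$ so the denominators $\li+\ri$ never vanish. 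Once this is in hand, the computation above is a direct algebraic manipulation.
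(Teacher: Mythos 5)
Your proposal is correct and follows essentially the same route as the paper: the first equality via \eqnref{eqn cor deletionid4}, and the second via Euler's formula applied to the degree-one homogeneous function $z(\ga)$ combined with \lemref{lem diff2}. Your explicit algebraic simplification using $\li^{2}(\li+2\ri)=\li^{2}(\li+\ri)+\li^{2}\ri$ just fills in details the paper leaves implicit.
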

\begin{proof}
The first equality follows from \eqnref{eqn cor deletionid4}. By Euler's formula, $z(\ga)=\sum_{e_i \in \ee{\ga}}\li \cdot \frac{\partial z(\ga)}{\partial L_i}$. Then the second equality follows from \lemref{lem diff2}.
\end{proof}
For a given metrized graph $\ga$ with $\# (\ee{\ga})=e$, let $\{L_1, L_2,
\cdots, L_e \}$ be the edge lengths. Then $r:\RR^e_{>0} \rightarrow
\RR$ given by $r(L_1, L_2, \cdots, L_e)=r(\ga)$ is a continuously
differentiable homogeneous function of degree $1$, when we consider
all possible length distributions without changing the topology of
the graph $\ga$.
\begin{lemma}\label{lem diff3}
Let $\ga$ be a metrized graph, and let $e_i \in \ee{\ga}$ be of length $L_i$ such that $\ga-e_i$ is connected. Then we have
$$\frac{\partial r(\ga)}{\partial L_i} =\frac{\ri^2}{(\li+\ri)^2}+\frac{\ri}{(\li+\ri)^2}r(\ga-e_i)-\frac{\ri}{(\li+\ri)^2}r(\oga_i).$$
\end{lemma}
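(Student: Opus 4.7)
The plan is to mirror the strategy used for \lemref{lem diff2}: first establish a contraction-deletion identity for $r(\ga)$ analogous to \thmref{thm cont-del for z(ga)}, then differentiate with respect to $\li$, using the fact that $r(\ga-e_i)$, $r(\oga_i)$, and $\ri$ are all independent of $\li$.

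To derive the identity for $r(\ga)$, I would use the relation $r(\ga) = \ell(\ga) - z(\ga)$ together with \thmref{thm cont-del for z(ga)}. Since deleting or contracting $e_i$ decreases the total length by exactly $\li$, we have $\ell(\ga-e_i) = \ell(\oga_i) = \ell(\ga) - \li$, so
\[
z(\ga-e_i) = \ell(\ga) - \li - r(\ga-e_i), \qquad z(\oga_i) = \ell(\ga) - \li - r(\oga_i).
\]
Substituting these expressions into \thmref{thm cont-del for z(ga)} and using $\frac{\li}{\li+\ri} + \frac{\ri}{\li+\ri} = 1$, the terms containing $\ell(\ga) - \li$ collapse, and after replacing $z(\ga)$ by $\ell(\ga) - r(\ga)$ and rearranging, I expect to obtain
\[
r(\ga) \;=\; \frac{\li \ri}{\li+\ri} + \frac{\li}{\li+\ri}\, r(\ga-e_i) + \frac{\ri}{\li+\ri}\, r(\oga_i).
\]

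Differentiating both sides with respect to $\li$ is then a routine quotient-rule computation, using
\[
\frac{\partial}{\partial \li}\!\left(\frac{\li \ri}{\li+\ri}\right) = \frac{\ri^2}{(\li+\ri)^2}, \quad \frac{\partial}{\partial \li}\!\left(\frac{\li}{\li+\ri}\right) = \frac{\ri}{(\li+\ri)^2}, \quad \frac{\partial}{\partial \li}\!\left(\frac{\ri}{\li+\ri}\right) = -\frac{\ri}{(\li+\ri)^2}.
\]
Summing these with coefficients $1$, $r(\ga-e_i)$, and $r(\oga_i)$ yields the stated formula. There is no serious obstacle here; the heart of the argument is the clean algebraic derivation of the contraction-deletion identity for $r(\ga)$ from the corresponding identity for $z(\ga)$, after which the lemma follows by differentiation exactly as in the proof of \lemref{lem diff2}.
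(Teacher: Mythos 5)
Your proposal is correct and matches the paper's own proof: the paper likewise converts the contraction-deletion identity for $z(\ga)$ into $r(\ga)=\frac{\li \ri}{\li+\ri}+\frac{\li}{\li+\ri}r(\ga-e_i)+\frac{\ri}{\li+\ri}r(\oga_i)$ via $\ell(\ga)=z(\ga)+r(\ga)$ and $\ell(\ga-e_i)=\ell(\oga_i)=\ell(\ga)-\li$, then differentiates in $\li$ using the independence of $r(\oga_i)$, $r(\ga-e_i)$, and $\ri$ from $\li$. No gaps.
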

\begin{proof}
Since $\ell(\ga)=z(\ga)+r(\ga)$ for any graph, and $\ell(\ga-e_i)=\ell(\oga_i)=\ell(\ga)-\li$, \thmref{thm cont-del for z(ga)} is equivalent to
\begin{equation}\label{eqn cor deletionid5}
\begin{split}
r(\ga)=\frac{\li \ri}{\li+\ri}+\frac{\li}{\li+\ri}r(\ga-e_i)+\frac{\ri}{\li+\ri}r(\oga_i).
\end{split}
\end{equation}
Note that $r(\oga_i)$, $r(\ga-e_i)$ and $\ri$ are independent of $\li$. Thus, taking the partial derivatives of the both sides of \eqnref{eqn cor deletionid5} with respect to $\li$ gives the result.
\end{proof}
Let $\ga-e_i$ be a connected graph for an edge $e_i \in \ee{\ga}$ of length $\li$. Suppose $\pp$ and $\qq$ are the end points of $e_i$, and $p \in \ga-e_i$. By applying circuit reductions,
$\ga-e_i$ can be transformed into a $Y$-shaped graph with the same resistances between $\pp$, $\qq$, and $p$ as in $\ga-e_i$ (see the articles \cite{CR} and \cite[Section $2$]{C2}).
The resulting graph is shown by the first graph in \figref{fig edgedelete5}, with the corresponding voltage values on each segment, where $\hj{x}{y}{z}$ is the voltage function in $\ga-e_i$. Since $\ga-e_i$ has such circuit reduction, $\ga$ has the circuit reduction as the second graph in \figref{fig edgedelete5}. From now on, we will use the following notation:
$R_{a_i,p} := \hj{\pp}{p}{\qq}$, $R_{b_i,p} := \hj{\qq}{\pp}{p}$, $R_{c_i,p} := \hj{p}{\pp}{\qq}$. Let $\ri$ be the resistance
between $\pp$ and $\qq$ in $\ga-e_i$. Note that $R_{a_i,p}+R_{b_i,p}=\ri$ for each $p \in \ga$.

If $\ga-e_i$ is not connected, we set $R_{b_i,p}=\ri=\infty$ and $R_{a_i,p}=0$ if $p$ belongs to the component of $\ga-e_i$
containing $\pp$, and we set $R_{a_i,p}=\ri=\infty$ and $R_{b_i,p}=0$ if $p$ belongs to the component of $\ga-e_i$
containing $\qq$.
\begin{figure}
\centering
\includegraphics[scale=0.7]{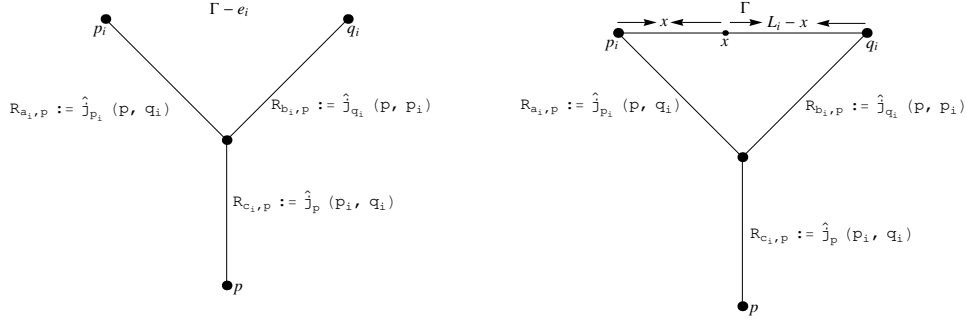} \caption{Circuit reduction of $\ga-e_i$ with reference to $\pp$, $\qq$ and $p$.} \label{fig edgedelete5}
\end{figure}

In the rest of the paper, for any metrized graph $\ga$ and a fixed vertex $p \in \vv{\ga}$ we will use the following notation:
\begin{equation*}
\begin{split}
y(\ga)&=\frac{1}{4}\sum_{e_i \, \in \ee{\ga}}\frac{\li
\ri^2}{(\li+\ri)^2}+\frac{3}{4}\sum_{e_i \, \in \ee{\ga}}\frac{\li
(R_{a_i,p}-R_{b_i,p})^2}{(\li+\ri)^2},
\\x(\ga)&=\sum_{e_i \, \in
\ee{\ga}}\frac{\li^2\ri}{(\li+\ri)^2} +\frac{3}{4}\sum_{e_i \, \in
\ee{\ga}}\frac{\li \ri^2}{(\li+\ri)^2}
-\frac{3}{4}\sum_{e_i \, \in \ee{\ga}}\frac{\li
(R_{a_i,p}-R_{b_i,p})^2}{(\li+\ri)^2}.
\end{split}
\end{equation*}
If $\ga-e_i$ is not connected for an edge $e_i$, i.e. $\ri$ is infinite (and $(R_{a_i,p}-R_{b_i,p})^2=\ri^2$), the
summands should be considered as their corresponding limits as $\ri\longrightarrow \infty$.

It follows from \cite[Proposition 2.9]{C2} that
\begin{equation}\label{eqn tau x and y}
\begin{split}
\tg=\frac{\elg}{12}-\frac{x(\ga)}{6}+\frac{y(\ga)}{6}.
\end{split}
\end{equation}
It is easy to see that
\begin{equation}\label{eqn sumof x and y}
\begin{split}
r(\ga)=x(\ga)+y(\ga), \qquad \text{and so} \qquad \elg=x(\ga)+y(\ga)+z(\ga).
\end{split}
\end{equation}
We call the following identities the contraction-deletion identities for $x(\ga)$ and $y(\ga)$.
\begin{theorem}\label{thm contraction-deletion1}
Let $\ga$ be a metrized graph with an edge $e_i \in \ee{\ga}$ such that $\ga-e_i$ is connected. Then we have
\begin{equation*}
\begin{split}
x(\ga) &=\frac{\li \ri}{\li+\ri}+\frac{\li}{\li+\ri}x(\ga-e_i)+\frac{\ri}{\li+\ri}x(\oga_i),
\\ y(\ga)&=\frac{\li}{\li+\ri}y(\ga-e_i)+\frac{\ri}{\li+\ri}y(\oga_i).
\end{split}
\end{equation*}
\end{theorem}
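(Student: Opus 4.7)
The plan is to combine the contraction-deletion identity for $r(\ga)$ (namely Equation~(\ref{eqn cor deletionid5})) with a corresponding identity for $\tg$ coming from \lemref{lem contanddel}, then solve the resulting $2 \times 2$ linear system for $x(\ga)$ and $y(\ga)$.

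First, I would translate \lemref{lem contanddel} into the variables $x$, $y$, $z$. Using \eqnref{eqn tau x and y} applied to each of $\ga$, $\ga-e_i$, and $\oga_i$ (noting that $\ell(\ga-e_i)=\ell(\oga_i)=\elg-\li$), the identity
$$\ta{\ga} = \frac{\li}{\li+\ri}\ta{\ga-e_{i}} +\frac{\ri}{\li+\ri}\ta{\oga_i}+\frac{\li^2-\li \ri}{12 (\li+\ri)}$$
becomes, after a short simplification in which the $\frac{\elg}{12}$ terms cancel and the leftover constants combine to $-\frac{\li\ri}{6(\li+\ri)}$, the single equation
\begin{equation*}
x(\ga)-y(\ga) = \frac{\li \ri}{\li+\ri} + \frac{\li}{\li+\ri}\bigl(x(\ga-e_i)-y(\ga-e_i)\bigr) + \frac{\ri}{\li+\ri}\bigl(x(\oga_i)-y(\oga_i)\bigr).
\end{equation*}

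Next, I would record the contraction-deletion identity for $r(\ga)$, which is \eqnref{eqn cor deletionid5} (and was itself obtained from \thmref{thm cont-del for z(ga)} via $\elg = z(\ga)+r(\ga)$). By \eqnref{eqn sumof x and y}, $r = x+y$, so this reads
\begin{equation*}
x(\ga)+y(\ga) = \frac{\li \ri}{\li+\ri} + \frac{\li}{\li+\ri}\bigl(x(\ga-e_i)+y(\ga-e_i)\bigr) + \frac{\ri}{\li+\ri}\bigl(x(\oga_i)+y(\oga_i)\bigr).
\end{equation*}

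Finally, adding the two displayed equations and dividing by $2$ gives the stated formula for $x(\ga)$, while subtracting them and dividing by $2$ gives the stated formula for $y(\ga)$ (with the $\tfrac{\li\ri}{\li+\ri}$ terms canceling in the subtraction, as required). No step is really difficult here; the only mild subtlety is the arithmetic checking that the constants $\frac{\elg}{12}$ and $\frac{\li^2-\li\ri}{12(\li+\ri)}$ assemble correctly into $\frac{\li\ri}{\li+\ri}$ on the right-hand side of the $x-y$ equation, so I would present that computation carefully and then let the linear algebra do the rest.
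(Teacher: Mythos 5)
Your proposal is correct and is essentially identical to the paper's own proof: the paper likewise combines Equation~(\ref{eqn cor deletionid5}) with \eqnref{eqn sumof x and y} to get the $x+y$ identity, and \lemref{lem contanddel} with \eqnref{eqn tau x and y} to get the $x-y$ identity, then solves the $2\times 2$ system. Your arithmetic check that the constants assemble to $\frac{\li\ri}{\li+\ri}$ in the $x-y$ equation is also right.
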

\begin{proof}
By Equations (\ref{eqn cor deletionid5}) and (\ref{eqn sumof x and y}),
\begin{equation}\label{eqn cor deletionid5b}
\begin{split}
x(\ga)+y(\ga)=\frac{\li \ri}{\li+\ri}+\frac{\li}{\li+\ri}\big(x(\ga-e_i)+ y(\ga-e_i)\big)+\frac{\ri}{\li+\ri}\big(x(\oga_i)+y(\oga_i) \big).
\end{split}
\end{equation}
 On the other hand, by \lemref{lem contanddel} and \eqnref{eqn tau x and y} applied to each of $\ga$, $\ga-e_i$ and $\oga_i$  we have
\begin{equation}\label{eqn cor deletionid5c}
\begin{split}
x(\ga)-y(\ga) &=\frac{\li \ri}{\li+\ri}+\frac{\li}{\li+\ri}\big(x(\ga-e_i)- y(\ga-e_i)\big)
+ \frac{\ri}{\li+\ri}\big(x(\oga_i)-y(\oga_i) \big).
\end{split}
\end{equation}
Hence, the result follows from \eqnref{eqn cor deletionid5b} and \eqnref{eqn cor deletionid5c}.
\end{proof}
\begin{lemma}\label{lem contraction Apq}
Let $\ga$ be a metrized graph with an edge $e_i \in \ee{\ga}$ such that $\ga-e_i$ is connected. Let $\pp$ and $\qq$
be end points of $e_i$. Then we have
$$x(\ga)-y(\ga)=x(\oga_i)-y(\oga_i)+6\frac{\li A_{\pp,\qq,\ga-{e_i}}}{\ri (\li+\ri)}.$$
\end{lemma}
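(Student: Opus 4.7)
The plan is to derive the identity by combining the expression \eqnref{eqn tau x and y} relating $\tg$ to $x(\ga)$ and $y(\ga)$ with the contraction formula given in \lemref{lemcontract1}. The key observation is that $x(\ga)-y(\ga)$ can be written purely in terms of $\tg$ and $\elg$, so the statement really amounts to a restatement of \lemref{lemcontract1}.

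First I would rewrite \eqnref{eqn tau x and y} as
\[
x(\ga)-y(\ga) \;=\; \frac{\elg}{2} \,-\, 6\,\tg,
\]
and apply the same identity to the contracted graph $\oga_i$, which has total length $\ell(\oga_i)=\elg-\li$, obtaining
\[
x(\oga_i)-y(\oga_i) \;=\; \frac{\elg-\li}{2} \,-\, 6\,\ta{\oga_i}.
\]
Subtracting these two equations gives
\[
x(\ga)-y(\ga)-\bigl(x(\oga_i)-y(\oga_i)\bigr) \;=\; \frac{\li}{2} \,-\, 6\bigl(\tg-\ta{\oga_i}\bigr).
\]

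Next I would invoke \lemref{lemcontract1}, which applies precisely because $\ga-e_i$ is assumed connected, to substitute
\[
\tg-\ta{\oga_i} \;=\; \frac{\li}{12} \,-\, \frac{\li A_{\pp,\qq,\ga-e_i}}{\ri(\li+\ri)}.
\]
Plugging this into the previous display, the $\frac{\li}{2}$ contributions cancel and the remaining term is exactly $6\,\frac{\li A_{\pp,\qq,\ga-e_i}}{\ri(\li+\ri)}$, which is the claimed identity.

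There is no real obstacle: the entire argument is a short algebraic manipulation, and both ingredients (\eqnref{eqn tau x and y} and \lemref{lemcontract1}) are already available. The only point worth checking is that the total length bookkeeping $\ell(\oga_i)=\elg-\li$ is used correctly, since contracting $e_i$ reduces the total length by exactly $\li$, so that the $\frac{\elg}{2}$ and $\frac{\elg-\li}{2}$ terms combine to produce the $\frac{\li}{2}$ that is then cancelled by the $\frac{\li}{12}$ term inside \lemref{lemcontract1} (after multiplication by $6$).
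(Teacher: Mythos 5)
Your proof is correct, and it is a genuinely more direct route than the one in the paper. The paper proves this lemma by first combining \lemref{lemcontract1} with \lemref{lemcor2twopunion} to get $\ta{\oga_i}=\ta{\ga-e_{i}}-\frac{\ri}{6}+\frac{A_{\pp,\qq,\ga-e_{i}}}{\ri}$, converting that via \eqnref{eqn tau x and y} into a relation between $x(\oga_i)-y(\oga_i)$ and $x(\ga-e_i)-y(\ga-e_i)$, and then substituting into the contraction--deletion identity \eqnref{eqn cor deletionid5c} for $x(\ga)-y(\ga)$; the deleted graph $\ga-e_i$ thus appears as an intermediate object throughout. You bypass $\ga-e_i$ entirely by observing that \eqnref{eqn tau x and y} inverts to $x(\ga)-y(\ga)=\frac{\elg}{2}-6\tg$, applying the same inversion to $\oga_i$ with $\ell(\oga_i)=\elg-\li$, and feeding in \lemref{lemcontract1} directly; the $\frac{\li}{2}$ terms cancel and the $A_{\pp,\qq,\ga-e_i}$ term survives with exactly the right coefficient. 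What your approach buys is brevity and the clean conceptual point that the lemma is literally \lemref{lemcontract1} rewritten in the $(x,y)$ coordinates; what the paper's approach buys is that the same intermediate computation (\eqnref{eqn contr Apq2}) exhibits the parallel relation between $\oga_i$ and $\ga-e_i$, which fits the contraction--deletion framework being developed in that section. Both arguments rest on the same hypotheses (connectedness of $\ga-e_i$ for \lemref{lemcontract1}, and the validity of \eqnref{eqn tau x and y} for the contracted graph, which the paper itself uses), so there is no gap.
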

\begin{proof}
It follows from \lemref{lemcontract1} and \lemref{lemcor2twopunion} that
\begin{equation}\label{eqn contr Apq}
\begin{split}
\ta{\oga_i}=\ta{\ga-e_{i}} -\frac{\ri}{6}+\frac{A_{p_{i},\qq,\ga-e_{i}}}{\ri}.
\end{split}
\end{equation}
From \eqnref{eqn contr Apq} and \eqnref{eqn tau x and y} applied to both $\oga_i$ and $\ga-e_i$, we get
\begin{equation}\label{eqn contr Apq2}
\begin{split}
x(\oga_i)-y(\oga_i)=x(\ga-e_{i})-y(\ga-e_{i})+\ri-6 \frac{A_{p_{i},\qq,\ga-e_{i}}}{\ri}.
\end{split}
\end{equation}
Therefore, we obtain the result by solving \eqnref{eqn contr Apq2} for $x(\ga-e_{i})-y(\ga-e_{i})$ and substituting
into \eqnref{eqn cor deletionid5c}.
\end{proof}
For a given metrized graph $\ga$ with $\# (\ee{\ga})=e$, let $\{L_1, L_2,
\cdots, L_e \}$ be the edge lengths. Both of the functions $x:\RR^e_{>0} \rightarrow
\RR$ given by $x(L_1, L_2, \cdots, L_e)=x(\ga)$ and $y:\RR^e_{>0} \rightarrow
\RR$ given by $y(L_1, L_2, \cdots, L_e)=y(\ga)$  are continuously
differentiable homogeneous functions of degree $1$, when we consider
all possible length distributions without changing the topology of
$\ga$.
\begin{theorem}\label{thm contraction-deletion2}
Let $\ga$ be a bridgeless metrized graph. Then we have
\begin{equation*}
\begin{split}
x(\ga) &=\sum_{e_i \in \ee{\ga}}\frac{\li \ri^2}{(\li+\ri)^2}+\sum_{e_i \in \ee{\ga}}\frac{\li \ri}{(\li+\ri)^2}\big(x(\ga-e_i)-x(\oga_i)\big),
\\ y(\ga)&=\sum_{e_i \in \ee{\ga}}\frac{\li \ri}{(\li+\ri)^2}\big(y(\ga-e_i)-y(\oga_i)\big).
\end{split}
\end{equation*}
\end{theorem}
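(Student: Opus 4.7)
The plan is to mimic the derivation of \thmref{thm diff1} for $z(\ga)$, but now applied to the functions $x$ and $y$. The two ingredients are (1) the contraction-deletion identities from \thmref{thm contraction-deletion1}, which express $x(\ga)$ and $y(\ga)$ in terms of $x(\ga-e_i), x(\oga_i)$ and $y(\ga-e_i), y(\oga_i)$ respectively, and (2) Euler's formula \eqnref{eqn Euler}, applicable because both $x$ and $y$ are continuously differentiable homogeneous functions of degree $1$ in the edge-length variables $(L_1, \dots, L_e)$.

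First I would fix an edge $e_i \in \ee{\ga}$ (bridgelessness of $\ga$ ensures $\ga-e_i$ is connected, so \thmref{thm contraction-deletion1} applies). The key observation is that in the identity
\[
x(\ga) \;=\; \frac{\li \ri}{\li+\ri} + \frac{\li}{\li+\ri}\,x(\ga-e_i) + \frac{\ri}{\li+\ri}\,x(\oga_i),
\]
each of $\ri$, $x(\ga-e_i)$, and $x(\oga_i)$ is independent of the variable $\li$. Differentiating with respect to $\li$ and using $\partial_{\li}\bigl(\tfrac{\li\ri}{\li+\ri}\bigr) = \tfrac{\ri^2}{(\li+\ri)^2}$, $\partial_{\li}\bigl(\tfrac{\li}{\li+\ri}\bigr) = \tfrac{\ri}{(\li+\ri)^2}$, and $\partial_{\li}\bigl(\tfrac{\ri}{\li+\ri}\bigr) = -\tfrac{\ri}{(\li+\ri)^2}$, I obtain
\[
\frac{\partial x(\ga)}{\partial L_i} \;=\; \frac{\ri^2}{(\li+\ri)^2} + \frac{\ri}{(\li+\ri)^2}\bigl(x(\ga-e_i)-x(\oga_i)\bigr).
\]
An entirely parallel differentiation of the second identity in \thmref{thm contraction-deletion1} yields
\[
\frac{\partial y(\ga)}{\partial L_i} \;=\; \frac{\ri}{(\li+\ri)^2}\bigl(y(\ga-e_i)-y(\oga_i)\bigr).
\]

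Now I apply Euler's formula to each of $x$ and $y$: since they are homogeneous of degree $1$,
\[
x(\ga) \;=\; \sum_{e_i \in \ee{\ga}} \li\,\frac{\partial x(\ga)}{\partial L_i}, \qquad y(\ga) \;=\; \sum_{e_i \in \ee{\ga}} \li\,\frac{\partial y(\ga)}{\partial L_i}.
\]
Substituting the partial derivatives computed above produces the two desired formulas immediately. The only subtle step is the justification that $x$ and $y$ are homogeneous of degree $1$ and continuously differentiable in the edge lengths, but this is already stated in the paragraph preceding the theorem and is analogous to the case of $z(\ga)$ handled before \lemref{lem diff2}. I do not anticipate a serious obstacle here; the entire argument is a direct transposition of the technique used for \thmref{thm diff1}, with the contraction-deletion identities for $x$ and $y$ playing the role that \thmref{thm cont-del for z(ga)} played for $z$.
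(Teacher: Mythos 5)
Your proposal is correct and follows essentially the same route as the paper: differentiate the contraction-deletion identities of \thmref{thm contraction-deletion1} with respect to $\li$ (using that $\ri$, $x(\ga-e_i)$, $x(\oga_i)$, $y(\ga-e_i)$, $y(\oga_i)$ do not depend on $\li$) and then apply Euler's formula for the degree-$1$ homogeneous functions $x$ and $y$. The paper's proof is exactly this computation, recorded as Equation (\ref{eqn partialsof x and y}).
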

\begin{proof}
By taking the partial derivatives of the both sides of the equalities in \thmref{thm contraction-deletion1} with respect to $\li$ gives
\begin{equation}\label{eqn partialsof x and y}
\begin{split}
\frac{\partial x(\ga)}{\partial L_i} &=\frac{\ri^2}{(\li+\ri)^2}+\frac{\ri}{(\li+\ri)^2}x(\ga-e_i)-\frac{\ri}{(\li+\ri)^2}x(\oga_i),
\\
\frac{\partial y(\ga)}{\partial L_i} &=\frac{\ri}{(\li+\ri)^2}y(\ga-e_i)-\frac{\ri}{(\li+\ri)^2}y(\oga_i).
\end{split}
\end{equation}
Therefore, by applying Euler's formula we obtain the equalities we wanted.
\end{proof}
We call the following identities the contraction identities for $x(\ga)$ and $y(\ga)$.
\begin{theorem}\label{thm contraction}
Let $\ga$ be a bridgeless metrized graph with $v=\#(\vv{\ga})\geq 2$. Then we have
%
\begin{align*}
(v-2) x(\ga) &=\sum_{e_i \in \ee{\ga}}\frac{\ri}{\li+\ri}x(\oga_i), & \qquad
(v-2) y(\ga) &=\sum_{e_i \in \ee{\ga}}\frac{\ri}{\li+\ri}y(\oga_i),
\end{align*}
\end{theorem}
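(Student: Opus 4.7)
The plan is to mimic the argument used in the proof of Proposition \ref{prop tau contract}: start from the contraction-deletion identity of Theorem \ref{thm contraction-deletion1}, weight each edge by $\frac{\ri}{\li+\ri}$ and sum over $\ee{\ga}$, and then compare with the deletion identity of Theorem \ref{thm contraction-deletion2} to eliminate the unknown quantities $x(\ga-e_i)$ and $y(\ga-e_i)$.

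More concretely, for the first identity I would multiply the first equation of Theorem \ref{thm contraction-deletion1} by $\frac{\ri}{\li+\ri}$ and sum over $e_i \in \ee{\ga}$. Using Equation \eqref{eqn genus}, which gives $\sum_{e_i \in \ee{\ga}} \frac{\ri}{\li+\ri} = v-1$, the left-hand side becomes $(v-1)\, x(\ga)$, while the right-hand side becomes
\[
\sum_{e_i \in \ee{\ga}} \frac{\li \ri^2}{(\li+\ri)^2} + \sum_{e_i \in \ee{\ga}} \frac{\li \ri}{(\li+\ri)^2} x(\ga-e_i) + \sum_{e_i \in \ee{\ga}} \frac{\ri^2}{(\li+\ri)^2} x(\oga_i).
\]
Then I would subtract from this the identity for $x(\ga)$ given in Theorem \ref{thm contraction-deletion2}. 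The terms $\sum \frac{\li \ri^2}{(\li+\ri)^2}$ and $\sum \frac{\li \ri}{(\li+\ri)^2} x(\ga-e_i)$ cancel, and one is left with
\[
(v-2)\, x(\ga) = \sum_{e_i \in \ee{\ga}} \frac{\ri^2}{(\li+\ri)^2} x(\oga_i) + \sum_{e_i \in \ee{\ga}} \frac{\li \ri}{(\li+\ri)^2} x(\oga_i) = \sum_{e_i \in \ee{\ga}} \frac{\ri(\li+\ri)}{(\li+\ri)^2} x(\oga_i),
\]
which simplifies to the asserted identity for $x(\ga)$.

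The identity for $y(\ga)$ follows the same scheme and is actually slightly cleaner since there is no $\frac{\li \ri}{\li+\ri}$ term: multiplying the second formula of Theorem \ref{thm contraction-deletion1} by $\frac{\ri}{\li+\ri}$ and summing yields $(v-1)\, y(\ga) = \sum \frac{\li \ri}{(\li+\ri)^2} y(\ga-e_i) + \sum \frac{\ri^2}{(\li+\ri)^2} y(\oga_i)$, and subtracting the $y$-identity of Theorem \ref{thm contraction-deletion2} eliminates the $y(\ga-e_i)$ sum and collapses the remainder exactly as above.

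There is no real obstacle, since all the ingredients have been assembled in the preceding theorems; the only slightly delicate point is the boundary case $v=2$. In that situation each $\oga_i$ has a single vertex, so every edge of $\oga_i$ is a self-loop with $R_j(\oga_i)=0$, which forces every summand in $x(\oga_i)$ and $y(\oga_i)$ to vanish. Hence both sides of each identity are zero, and the formulas hold trivially; the bridgeless assumption is what ensures that the manipulations leading to them are legitimate through Theorems \ref{thm contraction-deletion1} and \ref{thm contraction-deletion2}.
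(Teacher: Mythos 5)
Your proposal is correct and follows essentially the same route as the paper: multiply the contraction--deletion identities of Theorem \ref{thm contraction-deletion1} by $\frac{\ri}{\li+\ri}$, sum using $\sum_{e_i}\frac{\ri}{\li+\ri}=v-1$ from \eqnref{eqn genus}, and subtract the identities of Theorem \ref{thm contraction-deletion2} to cancel the $x(\ga-e_i)$ and $y(\ga-e_i)$ terms. The algebraic collapse $\frac{\ri^2}{(\li+\ri)^2}+\frac{\li\ri}{(\li+\ri)^2}=\frac{\ri}{\li+\ri}$ and your remark on the trivial case $v=2$ are both fine.
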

\begin{proof}
Multiplying both sides of the equalities in \thmref{thm contraction-deletion1} by $\frac{\ri}{\li+\ri}$, and using the fact that
$\sum_{e_i \in \ee{\ga}}\frac{\ri}{\li+\ri}=v-1$ (see \eqnref{eqn genus}) we obtain
\begin{equation}\label{eqn contractionid1}
\begin{split}
(v-1) x(\ga) &=\sum_{e_i \in \ee{\ga}}\frac{\li \ri^2}{(\li+\ri)^2}+\sum_{e_i \in \ee{\ga}}\frac{\li \ri}{(\li+\ri)^2}x(\ga-e_i)+
\sum_{e_i \in \ee{\ga}}\frac{\ri^2}{(\li+\ri)^2} x(\oga_i),
\\ (v-1) y(\ga) &=\sum_{e_i \in \ee{\ga}}\frac{\li \ri}{(\li+\ri)^2}y(\ga-e_i)+
\sum_{e_i \in \ee{\ga}}\frac{\ri^2}{(\li+\ri)^2} y(\oga_i).
\end{split}
\end{equation}
Thus, the result follows from \eqnref{eqn contractionid1} and \thmref{thm contraction-deletion2}.
\end{proof}
We call the first identity in the corollary below the contraction identity for $z(\ga)$.
\begin{corollary}\label{cor contraction1}
Let $\ga$ be a bridgeless metrized graph with $v=\#(\vv{\ga})\geq 2$. Then we have
\begin{equation*}
\begin{split}
(v-1) z(\ga) =\sum_{e_i \in \ee{\ga}}\frac{\ri}{\li+\ri}z(\oga_i), \qquad
(v-2) r(\ga) &=\sum_{e_i \in \ee{\ga}}\frac{\ri}{\li+\ri}r(\oga_i).
\end{split}
\end{equation*}
 \end{corollary}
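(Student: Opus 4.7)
The plan is to obtain both contraction identities by combining \thmref{thm contraction} (the contraction identities for $x(\ga)$ and $y(\ga)$) with the elementary relations $r(\ga) = x(\ga) + y(\ga)$ and $\ell(\ga) = z(\ga) + r(\ga)$ from \eqnref{eqn sumof x and y}. Once the behavior of $x$ and $y$ under the weighted sum over contractions is known, the corresponding formulas for $r$ and then $z$ should follow by additivity and a length-counting argument, with no new analytic input required.

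For the second identity, I would simply add the two equalities $(v-2)x(\ga) = \sum_{e_i \in \ee{\ga}} \frac{\ri}{\li+\ri} x(\oga_i)$ and $(v-2)y(\ga) = \sum_{e_i \in \ee{\ga}} \frac{\ri}{\li+\ri} y(\oga_i)$ from \thmref{thm contraction}, using $r(\ga) = x(\ga) + y(\ga)$ on the left and $r(\oga_i) = x(\oga_i) + y(\oga_i)$ inside the sum on the right. This yields $(v-2) r(\ga) = \sum_{e_i \in \ee{\ga}} \frac{\ri}{\li+\ri} r(\oga_i)$ immediately.

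For the first identity, the key observation is that contracting edge $e_i$ to a single point decreases the total length by exactly $\li$, so $\ell(\oga_i) = \ell(\ga) - \li$, and hence $z(\oga_i) = \ell(\ga) - \li - r(\oga_i)$ by \eqnref{eqn sumof x and y}. Multiplying by $\frac{\ri}{\li+\ri}$ and summing over all edges, I would use $\sum_{e_i \in \ee{\ga}} \frac{\ri}{\li+\ri} = v-1$ (from \eqnref{eqn genus}) and $\sum_{e_i \in \ee{\ga}} \frac{\li \ri}{\li+\ri} = r(\ga)$ (the definition in \eqnref{eqn defining z and r}), together with the $r$-identity just established, to collapse the right-hand side:
$$\sum_{e_i \in \ee{\ga}} \frac{\ri}{\li+\ri} z(\oga_i) = (v-1)\ell(\ga) - r(\ga) - (v-2)r(\ga) = (v-1)\bigl(\ell(\ga) - r(\ga)\bigr) = (v-1) z(\ga).$$

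I do not anticipate a real obstacle: once \thmref{thm contraction} is available, the whole argument is a short bookkeeping manipulation with the sums $\sum \frac{\ri}{\li+\ri}$ and $\sum \frac{\li \ri}{\li+\ri}$. The only point to verify is that the bridgeless hypothesis keeps every $\ri$ finite so that \thmref{thm contraction} applies term-by-term without invoking the limiting conventions introduced between \propref{prop tau contract} and \thmref{thm tau contract}. The edge cases $v=2$ (where $\oga_i$ is a bouquet of loops, all contributing $r(\oga_i)=0$) are consistent with the identity and require no separate treatment.
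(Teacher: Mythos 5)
Your proposal is correct and follows essentially the same route as the paper: the second identity by adding the two contraction identities of Theorem~\ref{thm contraction} and using $r(\ga)=x(\ga)+y(\ga)$, and the first by substituting $z(\oga_i)=\ell(\ga)-\li-r(\oga_i)$ and collapsing the sum with $\sum_{e_i}\frac{\ri}{\li+\ri}=v-1$ together with the $r$-identity just proved. The bookkeeping in your displayed computation matches the paper's (more tersely stated) argument exactly.
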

\begin{proof}
The second equality follows by adding the expressions in \thmref{thm contraction} and using the fact that
$\elg=z(\ga)+r(\ga)$. Using the second equality along with the facts that $z(\oga_i)=\elg-\li-r(\oga_i)$ and
$\displaystyle \sum_{e_i \in \ee{\ga}}\frac{\ri}{\li+\ri}=v-1$ (see \eqnref{eqn genus}), we obtain the first equality.
\end{proof}
\begin{corollary}\label{cor contraction2}
Let $\ga$ be a bridgeless metrized graph with $v=\#(\vv{\ga})\geq 3$. Then we have
\begin{equation*}
\begin{split}
\tg =\frac{\elg}{12}-\frac{1}{6 (v-2)}\sum_{e_i \in \ee{\ga}}\frac{\ri}{\li+\ri}\big(x(\oga_i)-y(\oga_i)\big).
\end{split}
\end{equation*}
 \end{corollary}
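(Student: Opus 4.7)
The plan is short: this corollary is an immediate algebraic consequence of two results already established in the paper, namely Equation~(\ref{eqn tau x and y}) and \thmref{thm contraction}. No new analytic ingredient is needed.

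First I would recall from Equation~(\ref{eqn tau x and y}) that
\[
\tg=\frac{\elg}{12}-\frac{x(\ga)}{6}+\frac{y(\ga)}{6}=\frac{\elg}{12}-\frac{1}{6}\bigl(x(\ga)-y(\ga)\bigr).
\]
So the task reduces to expressing the single quantity $x(\ga)-y(\ga)$ as a weighted sum over the edges involving $x(\oga_i)-y(\oga_i)$.

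Next I would invoke \thmref{thm contraction}, which, since $\ga$ is bridgeless with $v\geq 3$, gives separately
\[
(v-2)\,x(\ga)=\sum_{e_i\in\ee{\ga}}\frac{\ri}{\li+\ri}\,x(\oga_i),\qquad
(v-2)\,y(\ga)=\sum_{e_i\in\ee{\ga}}\frac{\ri}{\li+\ri}\,y(\oga_i).
\]
Subtracting the second equality from the first yields
\[
(v-2)\bigl(x(\ga)-y(\ga)\bigr)=\sum_{e_i\in\ee{\ga}}\frac{\ri}{\li+\ri}\bigl(x(\oga_i)-y(\oga_i)\bigr),
\]
which is legitimate because $v-2\geq 1$, so we may divide by $v-2$.

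Finally I would substitute this expression for $x(\ga)-y(\ga)$ into the formula for $\tg$ displayed above, obtaining exactly
\[
\tg=\frac{\elg}{12}-\frac{1}{6(v-2)}\sum_{e_i\in\ee{\ga}}\frac{\ri}{\li+\ri}\bigl(x(\oga_i)-y(\oga_i)\bigr),
\]
as required. There is no real obstacle here; the only thing to be careful about is that the hypothesis ``$\ga$ bridgeless with $v\geq 3$'' is precisely what is needed to apply \thmref{thm contraction} and to divide by $v-2$.
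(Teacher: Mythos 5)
Your proposal is correct and follows exactly the paper's own argument: subtract the two identities of \thmref{thm contraction} to get $(v-2)\bigl(x(\ga)-y(\ga)\bigr)=\sum_{e_i\in\ee{\ga}}\frac{\ri}{\li+\ri}\bigl(x(\oga_i)-y(\oga_i)\bigr)$, then substitute into \eqnref{eqn tau x and y}. Nothing is missing.
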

\begin{proof}
By \thmref{thm contraction}, we have
\begin{equation}\label{eqn contractionid2}
\begin{split}
(v-2) (x(\ga)-y(\ga))=\sum_{e_i \in \ee{\ga}}\frac{\ri}{\li+\ri}\big(x(\oga_i)-y(\oga_i)\big).
\end{split}
\end{equation}
Thus, the result follows from \eqnref{eqn tau x and y}.
\end{proof}
We call the identities in \thmref{thm deletion} and \corref{cor deletion1} the deletion identities.
\begin{theorem}\label{thm deletion}
Let $\ga$ be a bridgeless metrized graph. Then we have
\begin{equation*}
\begin{split}
g \cdot x(\ga)=
y(\ga)+\sum_{e_i \in \ee{\ga}}\frac{\li}{\li+\ri}x(\ga-e_i),
\qquad (g+1) y(\ga) &=\sum_{e_i \in \ee{\ga}}\frac{\li}{\li+\ri}y(\ga-e_i).
\end{split}
\end{equation*}
\end{theorem}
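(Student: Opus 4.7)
The plan is to sum the contraction-deletion identities of \thmref{thm contraction-deletion1} over all edges of $\ga$, and then to simplify the resulting sum $\sum_{e_i} \frac{\ri}{\li+\ri}\, x(\oga_i)$ (resp.\ with $y$) by invoking the contraction identities of \thmref{thm contraction}. The key numerical fact is that summing the raw (unweighted) contraction-deletion identity produces a factor $e$ on the left, while \thmref{thm contraction} contributes $(v-2)$; their difference is $e - v + 2 = g+1$.

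More concretely, for the first formula I would begin by summing the first identity of \thmref{thm contraction-deletion1} over all $e_i \in \ee{\ga}$:
\[
e \cdot x(\ga) \ =\ \sum_{e_i \in \ee{\ga}} \frac{\li \ri}{\li+\ri} \ +\ \sum_{e_i \in \ee{\ga}} \frac{\li}{\li+\ri}\, x(\ga-e_i) \ +\ \sum_{e_i \in \ee{\ga}} \frac{\ri}{\li+\ri}\, x(\oga_i).
\]
The first sum on the right is exactly $r(\ga)$ by the definition in \eqnref{eqn defining z and r}, and the last sum equals $(v-2)\, x(\ga)$ by \thmref{thm contraction}. Rearranging yields $(e-v+2)\, x(\ga) = r(\ga) + \sum_{e_i} \frac{\li}{\li+\ri}\, x(\ga-e_i)$. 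Since $g = e - v + 1$ and $r(\ga) = x(\ga) + y(\ga)$ by \eqnref{eqn sumof x and y}, this becomes $g \cdot x(\ga) = y(\ga) + \sum_{e_i} \frac{\li}{\li+\ri}\, x(\ga-e_i)$, which is the first deletion identity.

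For the second deletion identity, I would repeat the argument starting from the second formula of \thmref{thm contraction-deletion1}; the inhomogeneous term $\frac{\li \ri}{\li+\ri}$ is absent this time, so summing gives
\[
e \cdot y(\ga) \ =\ \sum_{e_i \in \ee{\ga}} \frac{\li}{\li+\ri}\, y(\ga-e_i) \ +\ \sum_{e_i \in \ee{\ga}} \frac{\ri}{\li+\ri}\, y(\oga_i),
\]
and \thmref{thm contraction} simplifies the last sum to $(v-2)\, y(\ga)$. Moving this term to the left and using $e - v + 2 = g+1$ produces $(g+1)\, y(\ga) = \sum_{e_i} \frac{\li}{\li+\ri}\, y(\ga-e_i)$, as desired.

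There is no real obstacle here: once \thmref{thm contraction-deletion1} and \thmref{thm contraction} are available, the derivation is routine bookkeeping, parallel in spirit to how \thmref{thm tau genus} was obtained from \lemref{lemcor2twopunion}. The only point requiring a little care is the bookkeeping identity $r(\ga) = x(\ga) + y(\ga)$ from \eqnref{eqn sumof x and y}, which converts the factor $g+1$ produced by the $x$-sum into the factor $g$ that appears in the statement.
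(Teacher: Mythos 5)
Your proof is correct, and it reaches the result by a slightly different route than the paper. The paper multiplies the contraction--deletion identities of \thmref{thm contraction-deletion1} by $\frac{\li}{\li+\ri}$ and sums, so that \eqnref{eqn genus} produces the factor $g$ directly, and then eliminates the $x(\oga_i)$ and $y(\oga_i)$ terms by adding the Euler-formula identities of \thmref{thm contraction-deletion2}. You instead sum the identities with weight $1$ (producing the factor $e$) and eliminate the contracted-graph terms by citing the contraction identities of \thmref{thm contraction}, so that the arithmetic $e-(v-2)=g+1$ plays the role that \eqnref{eqn genus} plays in the paper. Since \thmref{thm contraction} is itself obtained by weighting the same contraction--deletion identities by $\frac{\ri}{\li+\ri}$ and invoking \thmref{thm contraction-deletion2}, and since $\frac{\li}{\li+\ri}+\frac{\ri}{\li+\ri}=1$, the two derivations are algebraically equivalent; yours is arguably a touch cleaner to state because it leans on the already-packaged contraction identities rather than on the Euler-formula theorem directly. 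The only (trivial) caveat is that \thmref{thm contraction} is stated for $v\geq 2$, whereas the deletion identities as stated also allow $v=1$; in that degenerate case every edge is a self-loop, each $\ri=0$, and $x(\ga)$, $y(\ga)$ and all the sums vanish, so the identities hold vacuously.
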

\begin{proof}
Multiplying both sides of the equalities in \thmref{thm contraction-deletion1} by $\frac{\li}{\li+\ri}$, and using the fact that
$\sum_{e_i \in \ee{\ga}}\frac{\li}{\li+\ri}=g$ (see \eqnref{eqn genus}) we obtain
\begin{equation}\label{eqn deletionid1}
\begin{split}
g \cdot x(\ga) &=\sum_{e_i \in \ee{\ga}}\frac{\li^2 \ri}{(\li+\ri)^2}+\sum_{e_i \in \ee{\ga}}\frac{\li^2}{(\li+\ri)^2}x(\ga-e_i)+
\sum_{e_i \in \ee{\ga}}\frac{\li \ri}{(\li+\ri)^2} x(\oga_i),
\\ g \cdot y(\ga) &=\sum_{e_i \in \ee{\ga}}\frac{\li^2}{(\li+\ri)^2}y(\ga-e_i)+
\sum_{e_i \in \ee{\ga}}\frac{\li \ri}{(\li+\ri)^2} y(\oga_i).
\end{split}
\end{equation}

The first equality is obtained by adding the first equalities in \thmref{thm contraction-deletion2} and \eqnref{eqn deletionid1} and using the fact that $r(\ga)=x(\ga)+y(\ga)$.

Similarly, the second equality is obtained by adding the second equalities in \thmref{thm contraction-deletion2} and \eqnref{eqn deletionid1}.
\end{proof}
\begin{corollary}\label{cor deletion1}
Let $\ga$ be a bridgeless metrized graph. Then we have
\begin{equation*}
\begin{split}
(g-1) z(\ga) =\sum_{e_i \in \ee{\ga}}\frac{\li}{\li+\ri}z(\ga-e_i), \qquad
g \cdot r(\ga) &=\sum_{e_i \in \ee{\ga}}\frac{\li}{\li+\ri}r(\ga-e_i).
\end{split}
\end{equation*}
 \end{corollary}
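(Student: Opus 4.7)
The plan is to derive both identities from results already established in the paper, paralleling how \corref{cor contraction1} was obtained from \thmref{thm contraction}. Since $\ga$ is bridgeless, $\ga - e_i$ is connected for every $e_i \in \ee{\ga}$, so $r(\ga-e_i)$, $z(\ga-e_i)$, $x(\ga-e_i)$, and $y(\ga-e_i)$ are all well-defined.

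For the second (i.e.\ $r(\ga)$) identity I will simply add the two equalities in \thmref{thm deletion}. The term $y(\ga)$ appearing on the right-hand side of the first identity there cancels with the $+y(\ga)$ that merges the left-hand sides, so the combined left-hand side collapses to $g(x(\ga)+y(\ga)) = g \cdot r(\ga)$ by \eqnref{eqn sumof x and y}, while the right-hand side becomes $\sum_{e_i \in \ee{\ga}} \tfrac{\li}{\li+\ri}\bigl(x(\ga-e_i)+y(\ga-e_i)\bigr) = \sum_{e_i \in \ee{\ga}} \tfrac{\li}{\li+\ri}\, r(\ga-e_i)$.

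For the first (i.e.\ $z(\ga)$) identity I will leverage the decompositions $\ell(\ga) = z(\ga) + r(\ga)$ and $\ell(\ga - e_i) = \ell(\ga) - \li$, together with the genus equation $\sum_{e_i} \tfrac{\li}{\li+\ri} = g$ from \eqnref{eqn genus} and the definition $z(\ga) = \sum_{e_i} \tfrac{\li^2}{\li+\ri}$ from \eqnref{eqn defining z and r}. Substituting $z(\ga - e_i) = \ell(\ga) - \li - r(\ga - e_i)$ into the target sum yields
\begin{equation*}
\sum_{e_i \in \ee{\ga}} \frac{\li}{\li+\ri}\, z(\ga - e_i)
\;=\; \ell(\ga) \cdot g \;-\; z(\ga) \;-\; \sum_{e_i \in \ee{\ga}} \frac{\li}{\li+\ri}\, r(\ga - e_i).
\end{equation*}
Plugging in the $r(\ga)$ identity just established rewrites the last sum as $g \cdot r(\ga)$, so the whole expression collapses to $g(\ell(\ga) - r(\ga)) - z(\ga) = g \cdot z(\ga) - z(\ga) = (g-1)z(\ga)$, as required.

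No serious obstacle is anticipated: the argument is purely an algebraic recombination of identities already available (\thmref{thm deletion}, \eqnref{eqn genus}, \eqnref{eqn defining z and r}, \eqnref{eqn sumof x and y}), and the only point needing attention is that bridgelessness ensures finiteness of each $\ri$ and connectedness of each $\ga - e_i$, so that all terms in the sums are genuine rather than limiting values.
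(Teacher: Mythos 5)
Your proposal is correct and follows essentially the same route as the paper: the $r(\ga)$ identity by adding the two equalities in Theorem~\ref{thm deletion} and using $r(\ga)=x(\ga)+y(\ga)$, and then the $z(\ga)$ identity by combining that result with Equation~(\ref{eqn genus}) and $\ell(\ga)=z(\ga)+r(\ga)$. Your write-up just spells out the algebra that the paper leaves implicit.
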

\begin{proof}
Adding the identities in \thmref{thm deletion} and using the fact that $r(\ga)=x(\ga)+y(\ga)$ give the second formula.

Then the first formula is obtained by using the second formula, \eqnref{eqn genus} and the fact that $\elg=z(\ga)+r(\ga)$.
\end{proof}
\begin{corollary}\label{cor deletion2}
Let $\ga$ be a bridgeless metrized graph. Then we have
\begin{equation*}
\begin{split}
\tg =& \frac{\elg}{12}-\frac{1}{6 (g+1)}\sum_{e_i \in \ee{\ga}}\frac{\li}{\li+\ri}\big(x(\ga-e_i)-y(\ga-e_i)\big)
\\ & \quad -\frac{1}{6 (g+1) g}\sum_{e_i \in \ee{\ga}}\frac{\li}{\li+\ri}r(\ga-e_i).
\end{split}
\end{equation*}
 \end{corollary}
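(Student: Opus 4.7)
The plan is to reduce the identity to a direct algebraic consequence of the deletion identities already proved (\thmref{thm deletion} and \corref{cor deletion1}), combined with the formula $\tg=\frac{\elg}{12}-\frac{x(\ga)}{6}+\frac{y(\ga)}{6}$ from \eqnref{eqn tau x and y}. In particular, once one has these three ingredients, no further input from $A_{p_i,q_i,\ga-e_i}$ or voltage function analysis is needed; the result is a purely formal linear combination.

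First I would rewrite the target identity in the form
\begin{equation*}
\frac{x(\ga)-y(\ga)}{6}=\frac{1}{6(g+1)}\sum_{e_i\in\ee{\ga}}\frac{\li}{\li+\ri}\bigl(x(\ga-e_i)-y(\ga-e_i)\bigr)+\frac{1}{6(g+1)g}\sum_{e_i\in\ee{\ga}}\frac{\li}{\li+\ri}r(\ga-e_i),
\end{equation*}
so that the corollary will follow immediately from \eqnref{eqn tau x and y}. Next I would substitute into the right-hand side: by \thmref{thm deletion},
$\sum \frac{\li}{\li+\ri}x(\ga-e_i)=g\cdot x(\ga)-y(\ga)$ and $\sum \frac{\li}{\li+\ri}y(\ga-e_i)=(g+1)y(\ga)$, so the first sum equals $g\cdot x(\ga)-(g+2)y(\ga)$. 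For the second sum I would apply \corref{cor deletion1} to obtain $\sum \frac{\li}{\li+\ri}r(\ga-e_i)=g\cdot r(\ga)=g(x(\ga)+y(\ga))$, using the identity $r(\ga)=x(\ga)+y(\ga)$ from \eqnref{eqn sumof x and y}.

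Plugging these in, the right-hand side of the rewritten identity becomes
\begin{equation*}
\frac{1}{6(g+1)}\Bigl(g\cdot x(\ga)-(g+2)y(\ga)+x(\ga)+y(\ga)\Bigr)=\frac{1}{6(g+1)}\bigl((g+1)x(\ga)-(g+1)y(\ga)\bigr)=\frac{x(\ga)-y(\ga)}{6},
\end{equation*}
which is exactly what we wanted. There is no real obstacle here since the identity reduces to a short algebraic check; the only thing to watch is that the particular choice of coefficients $\frac{1}{6(g+1)}$ and $\frac{1}{6(g+1)g}$ is precisely what makes the $(g+2)y(\ga)$ term from the $x-y$ sum and the $y(\ga)$ contribution from the $r$ sum combine to produce the symmetric $(g+1)(x(\ga)-y(\ga))$ expression. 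It is also worth noting that bridgelessness is used only to apply \thmref{thm deletion} and \corref{cor deletion1} (so that each $R_i$ is finite and each $\ga-e_i$ is connected); the rest of the computation is formal.
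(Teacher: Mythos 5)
Your proposal is correct and follows essentially the same route as the paper: both derive the identity by combining \thmref{thm deletion}, the second identity of \corref{cor deletion1}, the relation $r(\ga)=x(\ga)+y(\ga)$, and \eqnref{eqn tau x and y}. You simply carry out the algebra more explicitly than the paper's terse version, and the computation checks out.
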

\begin{proof}
By \thmref{thm deletion} and the fact that $r(\ga)=x(\ga)+y(\ga)$, we have
\begin{equation}\label{eqn delid2}
\begin{split}
(g+1) \cdot (x(\ga)-y(\ga))=r(\ga)+\sum_{e_i \in \ee{\ga}}\frac{\li}{\li+\ri}\big(x(\ga-e_i)-y(\ga-e_i)\big).
\end{split}
\end{equation}
Thus, the result follows from \eqnref{eqn tau x and y} and the second identity in \corref{cor deletion1}.
\end{proof}
In this section, we proved the following identities among other things:

By \thmref{thm contraction-deletion1} and \thmref{thm cont-del for z(ga)}, the contraction-deletion identities for a metrized graph $\ga$ and for an edge $e_i \in \ee{\ga}$ with connected $\ga-e_i$ are
\begin{equation}\label{eqn contraction-deletion identities}
\begin{split}
x(\ga) &=\frac{\li \ri}{\li+\ri}+\frac{\li}{\li+\ri}x(\ga-e_i)+\frac{\ri}{\li+\ri}x(\oga_i),
\\ y(\ga) &=\frac{\li}{\li+\ri}y(\ga-e_i)+\frac{\ri}{\li+\ri}y(\oga_i),
\\ z(\ga) &=\frac{\li^2}{\li+\ri}+\frac{\li}{\li+\ri}z(\ga-e_i)+\frac{\ri}{\li+\ri}z(\oga_i).
\end{split}
\end{equation}
By \thmref{thm contraction} and \corref{cor contraction1}, the contraction identities for a bridgeless metrized graph with $v=\#(\vv{\ga})\geq 2$ are
\begin{equation}\label{eqn contraction identities}
\begin{aligned}
(v-2) x(\ga) &=\sum_{e_i \in \ee{\ga}}\frac{\ri}{\li+\ri}x(\oga_i), & \qquad
(v-2) y(\ga) &=\sum_{e_i \in \ee{\ga}}\frac{\ri}{\li+\ri}y(\oga_i),
\\ (v-1) z(\ga) &=\sum_{e_i \in \ee{\ga}}\frac{\ri}{\li+\ri}z(\oga_i), & \qquad
(v-2) r(\ga) &=\sum_{e_i \in \ee{\ga}}\frac{\ri}{\li+\ri}r(\oga_i).
\end{aligned}
\end{equation}
By \thmref{thm deletion} and \corref{cor deletion1}, the deletion identities for a bridgeless $\ga$ are
\begin{equation}\label{eqn deletion identities}
\begin{aligned}
& g \cdot x(\ga) =y(\ga)+\sum_{e_i \in \ee{\ga}}\frac{\li}{\li+\ri}x(\ga-e_i),  & \:
& (g+1) y(\ga) =\sum_{e_i \in \ee{\ga}}\frac{\li}{\li+\ri}y(\ga-e_i),
\\ & (g-1) z(\ga) =\sum_{e_i \in \ee{\ga}}\frac{\li}{\li+\ri}z(\ga-e_i), & \:
& g \cdot r(\ga) =\sum_{e_i \in \ee{\ga}}\frac{\li}{\li+\ri}r(\ga-e_i).
\end{aligned}
\end{equation}
Also, for a bridgeless $\ga$  the following identity of \thmref{thm diff1} deserves attention:
\begin{equation}\label{eqn deserve attention}
\sum_{e_i \in \ee{\ga}}\frac{\li K_i(\ga)}{\li+\ri}= \sum_{e_i \in \ee{\ga}}\frac{\li \ri}{(\li+\ri)^2}
\big(z(\oga_i)-z(\ga-e_i) \big)=\sum_{e_i \in \ee{\ga}}\frac{\li^2 \ri}{(\li+\ri)^2}.
\end{equation}

\section{Successive edge contraction}\label{section successive edge
contraction}

In this section, we will successively contract edges in $\ee{\ga}$
for any metrized graph $\ga$. The contraction identities developed in the previous section will enable us to generalize the results of \secref{sec edge cont and del} and some of the results of \secref{section cont and del identities}. The results of this section will help us to understand the effects
of topological properties of $\ga$, such as the edge connectivity, on $\tg$.

Let $\ga$ be a metrized graph and let $\oga_i$ be the metrized graph obtained by contracting $i$-th edge
$e_{i} \in \ee{\ga}$ to its end points.
Similarly, for any integer $k \geq 2$, let $\oga_{i_1,i_2, \dots, i_k}$ be the metrized graph obtained by contracting $i_k$-th edge
$e_{i_k} \in \ee{\oga_{i_1,i_2, \dots, i_{k-1}}}$ to its end points. Note that
$\ee{\oga_{i_1,i_2, \dots, i_k}}= \ee{\ga}-\{e_{i_1}, e_{i_2}, \dots, e_{i_k}\}$ for any $k$.
Let $\oga_{i_0}:=\ga$.

Let $e_{i_k} \in \ee{\ga}$ be an edge of index $i_k$. Recall that we denote the resistance between the end points of $e_{i_k}$ in $\ga-e_{i_k}$ by $R_{i_k}$ and that we use $L_{i_k}$ to denote the length of $e_{i_k}$.

Now, we generalize Equation (\ref{eqn contractionid2}) as follows:
\begin{lemma}\label{lem succesive contraction of x-y}
Let $\ga$ be a bridgeless metrized graph with $(k+2) \leq v=\#(\vv{\ga)}$ for
some integer $k \geq 1$. Then
\begin{equation*}
\begin{split}
\frac{(v-2)!}{(v-k-2)!}\big(x(\ga)-y(\ga)\big) = & \sum_{e_{i_1} \in \;
\ee{\ga}}\frac{R_{i_1}}{L_{i_1}+R_{i_1}} \sum_{\substack{e_{i_2} \in
\\ \ee{\oga_{i_1}}}}\frac{R_{i_2}}{L_{i_2}+R_{i_2}}
\; \dots
\\ & \qquad \sum_{e_{i_k} \in \; \ee{\oga_{i_1, \dots, i_{k-1}}}}
\frac{R_{i_k}}{L_{i_k}+R_{i_k}}\big(x(\oga_{i_1,\dots, i_k})-y(\oga_{i_1,\dots, i_k})\big).
\end{split}
\end{equation*}
\end{lemma}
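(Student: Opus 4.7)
The plan is induction on $k$. The base case $k=1$ is exactly \eqnref{eqn contractionid2}, which was derived in the proof of \corref{cor contraction2} by combining \thmref{thm contraction} with \eqnref{eqn tau x and y}.

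For the inductive step, I would assume the identity at level $k \geq 1$ under the stronger hypothesis $v \geq k+3$ required for the $(k+1)$-fold version, and apply the base case identity to the innermost factor $x(\oga_{i_1,\ldots,i_k}) - y(\oga_{i_1,\ldots,i_k})$. Before doing so, I must verify that each intermediate contracted graph $\oga_{i_1,\ldots,i_k}$ is bridgeless with at least three vertices, since these are the hypotheses of \corref{cor contraction2}. Bridgelessness is preserved under contraction: for any edge $e_j \in \ee{\oga_i}$ with $j \neq i$, the graph $\oga_i - e_j$ is the contraction of the connected graph $\ga - e_j$ along $e_i$, hence is connected. Iterating, every $\oga_{i_1,\ldots,i_k}$ remains bridgeless. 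A self-loop $e_{i_j}$ contributes $0$ to its sum because $R_{i_j} = 0$ for a loop, so without loss of generality each contraction strictly decreases the vertex count by one, giving $\#\vv{\oga_{i_1,\ldots,i_k}} = v-k \geq 3$.

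Applying \eqnref{eqn contractionid2} to each such $\oga_{i_1,\ldots,i_k}$ yields
\[
(v-k-2)\bigl(x(\oga_{i_1,\ldots,i_k}) - y(\oga_{i_1,\ldots,i_k})\bigr) = \sum_{e_{i_{k+1}} \in \ee{\oga_{i_1,\ldots,i_k}}} \frac{R_{i_{k+1}}}{L_{i_{k+1}}+R_{i_{k+1}}} \bigl(x(\oga_{i_1,\ldots,i_{k+1}}) - y(\oga_{i_1,\ldots,i_{k+1}})\bigr).
\]
Multiplying both sides of the $k$-th inductive hypothesis by $(v-k-2)$ and substituting this identity into the innermost factor of the nested sum produces the level-$(k+1)$ statement, since $\frac{(v-2)!}{(v-k-2)!}\cdot(v-k-2) = \frac{(v-2)!}{(v-k-3)!}$. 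The main (and essentially only) non-routine point is the preservation of bridgelessness and the vertex-count bound under iterated contractions; once those are in hand, the induction is pure bookkeeping on the factorial prefactor.
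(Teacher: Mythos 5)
Your proposal is correct and follows the same route as the paper: the paper's proof is exactly an induction built on repeated application of Equation~(\ref{eqn contractionid2}), justified by the observations that contraction preserves bridgelessness and that self-loop terms vanish because $R_{i_j}=0$. You simply spell out the bookkeeping (the factorial prefactor and the vertex-count check $v-k\geq 3$) that the paper leaves implicit.
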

\begin{proof}
Note that if an edge of a bridgeless graph is contracted the resulting graph will be also bridgeless.
If an edge $e_{i_j}$ is a self loop, then $\frac{R_{i_j}}{L_{i_j}+R_{i_j}}=0$. Thus, contraction of self loops
does not contribute to sums in contraction identities.
Hence, we can inductively apply \eqnref{eqn contractionid2} to obtain the result.
\end{proof}
\begin{remark}
After contracting edges in a graph $\ga$, multiple edges or self-loops may appear. However, this does not cause any problem
for contraction identities.
\end{remark}
We can generalize \corref{cor contraction2} as follows:
\begin{theorem}\label{thm succesive contraction of x-y}
Let $\ga$ be a bridgeless metrized graph with $(k+2) \leq v=\#(\vv{\ga)}$ for
some integer $k \geq 1$. Then we have
\begin{equation*}
\begin{split}
\tg= &\frac{\elg}{12}-\frac{(v-k-2)!}{6 (v-2)!}\sum_{e_{i_1} \in \;
\ee{\ga}}\frac{R_{i_1}}{L_{i_1}+R_{i_1}} \sum_{\substack{e_{i_2} \in
\\ \ee{\oga_{i_1}}}}\frac{R_{i_2}}{L_{i_2}+R_{i_2}}
\; \dots
\\ & \qquad \sum_{e_{i_k} \in \; \ee{\oga_{i_1, \dots, i_{k-1}}}}
\frac{R_{i_k}}{L_{i_k}+R_{i_k}}\big(x(\oga_{i_1,\dots, i_k})-y(\oga_{i_1,\dots, i_k})\big).
\end{split}
\end{equation*}
\end{theorem}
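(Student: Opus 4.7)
The statement to prove is essentially a direct substitution once Lemma 5.1 is in hand, so the proof should be short. My plan is to start from the identity
\[
\tg \;=\; \frac{\elg}{12} - \frac{x(\ga)-y(\ga)}{6},
\]
which is just a rewriting of the earlier \eqnref{eqn tau x and y}. This already expresses $\tg$ in terms of $x(\ga)-y(\ga)$, so the only thing remaining is to replace the quantity $x(\ga)-y(\ga)$ by the $k$-fold iterated sum appearing on the right-hand side of the theorem.

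For that replacement I will invoke \lemref{lem succesive contraction of x-y}, which gives precisely
\[
\frac{(v-2)!}{(v-k-2)!}\bigl(x(\ga)-y(\ga)\bigr)
= \sum_{e_{i_1}\in\ee{\ga}}\frac{R_{i_1}}{L_{i_1}+R_{i_1}}\cdots\!\!\!\sum_{e_{i_k}\in\ee{\oga_{i_1,\dots,i_{k-1}}}}\!\!\!
\frac{R_{i_k}}{L_{i_k}+R_{i_k}}\bigl(x(\oga_{i_1,\dots,i_k})-y(\oga_{i_1,\dots,i_k})\bigr).
\]
Solving for $x(\ga)-y(\ga)$ and substituting into the first display yields the desired formula, with coefficient $\frac{(v-k-2)!}{6(v-2)!}$ in front of the iterated sum.

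There is no real obstacle here beyond making sure the hypothesis $v \geq k+2$ matches the inductive use of \corref{cor contraction2} inside the proof of \lemref{lem succesive contraction of x-y}: at the $j$-th stage of contraction, the current graph $\oga_{i_1,\dots,i_j}$ has $v-j$ vertices, and to apply the contraction identity we need that number to be at least $3$, i.e.\ $v-j \geq 3$, which is guaranteed for $j \leq k-1$ by the assumption $v \geq k+2$. Since Lemma 5.1 already packages this inductive reasoning (and also observes that contractions of bridgeless graphs remain bridgeless, and that self-loops contribute trivially to the contraction sums), no additional bookkeeping is needed, and the proof consists of just the two steps described above.
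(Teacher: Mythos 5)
Your proposal is correct and matches the paper's own proof, which likewise obtains the result by combining \lemref{lem succesive contraction of x-y} with \eqnref{eqn tau x and y}. The extra remark verifying that $v \geq k+2$ suffices for the iterated contractions is consistent with the bookkeeping already done inside the paper's Lemma on successive contraction of $x-y$.
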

\begin{proof}
We can use \lemref{lem succesive contraction of x-y} and \eqnref{eqn tau x and y} to obtain the result.
\end{proof}
Here is another formula for $r(\ga)$:
\begin{proposition}\label{prop succes contr dif}
Let $\ga$ be a bridgeless graph with $ 3 \leq v=\#(\vv{\ga)}$. Then
for any $k$ with $k+2 \leq v$,
\begin{equation}\label{eqn successive contraction of r}
\begin{split}
\frac{k(v-2)!}{(v-k-1)!} r(\ga)= \sum_{e_{i_1} \in
\ee{\ga}}\frac{R_{i_1}}{L_{i_1}+R_{i_1}} \; \; \dots
\sum_{\substack{e_{i_{k}} \in
\\ \ee{\oga_{i_1, \dots, i_{k-1}}}}}\frac{R_{i_{k}}}{L_{i_{k}}+R_{i_{k}}}
\sum_{t=1}^{k}L_{i_{t}}.
\end{split}
\end{equation}
\end{proposition}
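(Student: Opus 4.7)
The plan is to expand $\sum_{t=1}^{k} L_{i_t}$ by linearity and write the right-hand side of \eqnref{eqn successive contraction of r} as $\sum_{t=1}^{k} T_{k,t}(\ga)$, where
\[ T_{k,t}(\ga) := \sum_{e_{i_1} \in \ee{\ga}} \frac{R_{i_1}}{L_{i_1}+R_{i_1}} \cdots \sum_{e_{i_k} \in \ee{\oga_{i_1,\dots,i_{k-1}}}} \frac{R_{i_k}}{L_{i_k}+R_{i_k}} \; L_{i_t}. \]
It then suffices to show that $T_{k,t}(\ga) = \frac{(v-2)!}{(v-k-1)!} r(\ga)$ for every $1 \leq t \leq k$, because summing over $t$ produces the stated factor $k$.

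I would evaluate each $T_{k,t}(\ga)$ by factoring it into two independent blocks. The outer $t$-fold block absorbs $L_{i_t}$ into the factor $\frac{R_{i_t}}{L_{i_t}+R_{i_t}}$ to produce the weight $\frac{L_{i_t} R_{i_t}}{L_{i_t}+R_{i_t}}$, whose sum over $e_{i_t}$ is by definition $r(\oga_{i_1,\dots,i_{t-1}})$. The remaining inner $(k-t)$-fold block consists entirely of weights $\frac{R}{L+R}$ summed over $\oga_{i_1,\dots,i_t}$. A straightforward induction using the identity $\sum_{e_j} \frac{R_j}{L_j+R_j} = v-1$ from \eqnref{eqn genus}, applied at each step to the current contracted graph, shows that the inner block equals
\[ (v-t-1)(v-t-2)\cdots(v-k) \; = \; \frac{(v-t-1)!}{(v-k-1)!}, \]
a constant depending only on the vertex count of $\oga_{i_1,\dots,i_t}$ and not on which specific edges $e_{i_1},\dots,e_{i_t}$ were contracted.

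The outer block is handled by iterating the contraction identity $(\#\vv{\ga}-2)\, r(\ga) = \sum_{e_i} \frac{R_i}{L_i+R_i} r(\oga_i)$ from \corref{cor contraction1} exactly $t-1$ times, starting from the base case $\sum_{e_i} \frac{L_i R_i}{L_i+R_i} = r(\ga)$; this yields $\frac{(v-2)!}{(v-t-1)!} r(\ga)$ for the outer block. Multiplying the two evaluations gives $T_{k,t}(\ga) = \frac{(v-2)!}{(v-k-1)!} r(\ga)$, independent of $t$, and summation over $t$ completes the proof. The main bookkeeping pitfall is that intermediate contractions may create self-loops so that the vertex count appears to stagnate; however, a self-loop has $R=0$ and therefore forces the corresponding summand to vanish in both types of sums, so each non-vanishing contraction does reduce the vertex count by exactly one and the counts line up as claimed.
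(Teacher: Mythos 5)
Your proof is correct, and it reaches the identity by a slightly different route than the paper. The paper's proof is an induction on $k$: it first records the iterated form of the second identity in \corref{cor contraction1}, namely $\frac{(v-2)!}{(v-k-2)!}\,r(\ga)=\sum_{e_{i_1}}\frac{R_{i_1}}{L_{i_1}+R_{i_1}}\cdots\sum_{e_{i_k}}\frac{R_{i_k}}{L_{i_k}+R_{i_k}}\,r(\oga_{i_1,\dots,i_k})$, and then in the inductive step splits only the last summand $L_{i_{n+1}}$ off from $\sum_{t=1}^{n+1}L_{i_t}$, handling that piece with the iterated contraction identity and the remaining piece with \eqnref{eqn genus} applied to the innermost sum together with the induction hypothesis. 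Your argument in effect unrolls that induction: you split all $k$ summands at once and evaluate each $T_{k,t}$ in closed form as the product of the outer block $\frac{(v-2)!}{(v-t-1)!}\,r(\ga)$ (iterated \corref{cor contraction1}, seeded by $\sum_{e_i}\frac{L_iR_i}{L_i+R_i}=r(\ga)$) and the inner block $\frac{(v-t-1)!}{(v-k-1)!}$ (iterated \eqnref{eqn genus}). The ingredients are identical, but your organization makes the $t$-independence of each term, and hence the overall factor $k$, visible at a glance, whereas in the paper it emerges only from the arithmetic of the inductive step. Your remark that self-loops carry weight $\frac{R}{L+R}=0$, so that every surviving term reduces the vertex count by exactly one and the counts $v-j$ are legitimate, is exactly the observation the paper makes in the proof of \lemref{lem succesive contraction of x-y} and is indeed the one bookkeeping point that needs to be said; with it, your factorization into two blocks is fully justified.
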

\begin{proof}
By applying the second part of \corref{cor contraction1} successively, we obtain
\begin{equation}\label{eqn contraction r2}
\begin{split}
\frac{(v-2)!}{(v-k-2)!} r(\ga) &=
\sum_{e_{i_1} \in
\ee{\ga}}\frac{R_{i_1}}{L_{i_1}+R_{i_1}} \; \; \dots
\sum_{\substack{e_{i_{k}} \in
\\ \ee{\oga_{i_1, \dots, i_{k-1}}}}}\frac{R_{i_{k}}}{L_{i_{k}}+R_{i_{k}}}
r(\oga_{i_1, \dots, i_{k}}).
\end{split}
\end{equation}
Now, we can use induction on $k$ to show the identity in the proposition. When $k=1$, the result holds trivially by the definition of $r(\ga)$. Suppose the result is true for $k=n$ where $n+3 \leq v$. Let $A$ be the right hand side of \eqnref{eqn successive contraction of r} for $k=n+1$. By splitting the sum $\sum_{t=1}^{n+1}L_{i_{t}}=\big( \sum_{t=1}^{n}L_{i_{t}} \big)+L_{n+1}$ we have
\begin{equation*}\label{eqn contraction r3}
\begin{split}
A &=\sum_{\substack{e_{i_1} \in \\
\ee{\ga}}}\frac{R_{i_1}}{L_{i_1}+R_{i_1}} \; \; \dots
\sum_{\substack{e_{i_{n}} \in \\ \ee{\oga_{i_1, \dots, i_{n-1}}}}}
\frac{R_{i_{n}}}{L_{i_{n}}+R_{i_{n}}}\sum_{t=1}^{n}L_{i_{t}}
\sum_{\substack{e_{i_{n+1}} \in
\\ \ee{\oga_{i_1, \dots, i_{n}}}}}\frac{R_{i_{n+1}}}{L_{i_{n+1}}+R_{i_{n+1}}}
\\ & \qquad +
\sum_{\substack{e_{i_1} \in \\
\ee{\ga}}}\frac{R_{i_1}}{L_{i_1}+R_{i_1}} \; \; \dots
\sum_{\substack{e_{i_{n}} \in \\ \ee{\oga_{i_1, \dots, i_{n-1}}}}}
\frac{R_{i_{n}}}{L_{i_{n}}+R_{i_{n}}}r(\oga_{i_1, \dots, i_{n}})
\\ &=(v-n-1) \sum_{\substack{e_{i_1} \in \\
\ee{\ga}}}\frac{R_{i_1}}{L_{i_1}+R_{i_1}} \; \; \dots
\sum_{\substack{e_{i_{n}} \in \\ \ee{\oga_{i_1, \dots, i_{n-1}}}}}
\frac{R_{i_{n}}}{L_{i_{n}}+R_{i_{n}}}\sum_{t=1}^{n}L_{i_{t}}
+ \frac{(v-2)!}{(v-n-2)!} r(\ga)
\\ & \text{by \eqnref{eqn genus} applied to $\oga_{i_1, \dots, i_{n}}$, and by \eqnref{eqn contraction r2}.}
\\ &=\frac{(n+1)(v-2)!}{(v-n-2)!} r(\ga), \quad \text{by the induction assumption}.
\end{split}
\end{equation*}
Hence the result follows.
\end{proof}
Note that \eqnref{eqn contraction r2} generalizes the second equation in \corref{cor contraction1}.
\begin{corollary}\label{cor succes contr dif}
Let $\ga$ be a bridgeless graph with $ 3 \leq v=\#(\vv{\ga)}$. Then
\begin{equation*}
\begin{split}
(v-2) (v-2)! r(\ga) =&\sum_{\substack{e_{i_1} \in \\
\ee{\ga}}}\frac{R_{i_1}}{L_{i_1}+R_{i_1}} \; \; \dots
\sum_{\substack{e_{i_{v-2}} \in
\\ \ee{\oga_{i_1, \dots, i_{v-3}}}}}\frac{R_{i_{v-2}}}{L_{i_{v-2}}+R_{i_{v-2}}}
\sum_{t=1}^{v-2}L_{i_{t}}.
\end{split}
\end{equation*}
\end{corollary}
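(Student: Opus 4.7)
The plan is to obtain this corollary as the extremal specialization of \propref{prop succes contr dif}, pushing the successive contraction all the way until only two vertices remain. Since \propref{prop succes contr dif} is valid for every $k$ with $k+2 \leq v$, we are allowed to take $k = v-2$, which is the largest legal value; geometrically this corresponds to contracting $v-2$ edges so that the resulting graph has exactly two vertices.

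With this choice of $k$, the factorial on the left-hand side of \eqnref{eqn successive contraction of r} simplifies neatly: we have $v-k-1 = 1$, so $(v-k-1)! = 1$, and therefore
\[
\frac{k(v-2)!}{(v-k-1)!} \;=\; (v-2)\cdot (v-2)!,
\]
which is exactly the coefficient appearing on the left-hand side of the desired identity. Meanwhile, on the right-hand side the iterated sum runs through the edges $e_{i_1}, e_{i_2}, \dots, e_{i_{v-2}}$ (one summation for each successive contraction), and the final inner factor becomes $\sum_{t=1}^{v-2} L_{i_t}$. So the right-hand sides match as well.

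Thus the proof is essentially a one-line substitution: invoke \propref{prop succes contr dif} with $k = v-2$ and simplify the factorial prefactor. The only thing to verify is that the hypothesis $k+2 \leq v$ is still satisfied (it becomes the tautology $v \leq v$), and that the intermediate graphs $\oga_{i_1, \dots, i_{v-3}}$ in the innermost summation are still bridgeless with at least $3$ vertices so that \propref{prop succes contr dif} applies at every stage; the observation made in the proof of \lemref{lem succesive contraction of x-y}, that contracting an edge in a bridgeless graph yields a bridgeless graph and that self-loops contribute nothing to the weighted sums via $\frac{R_{i_j}}{L_{i_j}+R_{i_j}} = 0$, takes care of this without further work. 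The main (very mild) point to watch is the bookkeeping of the factorial exponents when $k$ reaches the boundary value, but no independent argument is needed.
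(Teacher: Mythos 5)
Your proof is correct and matches the paper's own argument exactly: the corollary is obtained by substituting $k=v-2$ into \propref{prop succes contr dif} and noting that $(v-k-1)!=1!=1$, so the prefactor becomes $(v-2)(v-2)!$. The extra remarks about bridgelessness being preserved under contraction are harmless but not needed, since the iteration is already internal to the proposition being invoked.
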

\begin{proof}
The result follows from \propref{prop succes contr dif} with
$k=v-2$.
\end{proof}
\begin{corollary}\label{cor succes contr difb}
Let $\ga$ be a bridgeless graph with $ 3 \leq v=\#(\vv{\ga)}$ and
$e$ edges. For any $k \in \{1,2, \dots, v-2\}$, let $A_k=\{
\sum_{t=1}^{k}L_{i_{t}}| \{ i_1, \dots, i_{k}\} \subseteq \{1,2,
\dots, e\} \}$. Let $C_{k}=\max{(A_k)} $ and $c_{k}=\min{(A_k)}
$. Then we have
$$ \frac{(v-1)}{k}c_k \leq r(\ga)
\leq \frac{(v-1)}{k}C_k,
\text{ and in particular, \quad}
\frac{v-1}{v-2}c_{v-2} \leq r(\ga)
\leq \frac{v-1}{v-2}C_{v-2}.$$
\end{corollary}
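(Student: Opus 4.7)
The plan is to start directly from the identity of \propref{prop succes contr dif} and bound the innermost factor $\sum_{t=1}^{k}L_{i_{t}}$ while evaluating the outer nested weighted sum exactly.

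First, for every $k$-tuple of distinct indices $(i_{1},\dots,i_{k})$ appearing in the nested sum, we have by definition of $c_{k}$ and $C_{k}$
\[
c_{k}\ \leq\ \sum_{t=1}^{k}L_{i_{t}}\ \leq\ C_{k}.
\]
Factoring these bounds out of the nested sum in \propref{prop succes contr dif} reduces the problem to evaluating
\[
S \ :=\ \sum_{e_{i_{1}} \in \ee{\ga}}\frac{R_{i_{1}}}{L_{i_{1}}+R_{i_{1}}} \sum_{e_{i_{2}} \in \ee{\oga_{i_{1}}}}\frac{R_{i_{2}}}{L_{i_{2}}+R_{i_{2}}}\ \cdots\ \sum_{e_{i_{k}} \in \ee{\oga_{i_{1},\dots,i_{k-1}}}}\frac{R_{i_{k}}}{L_{i_{k}}+R_{i_{k}}}.
\]

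Next, I would compute $S$ by peeling off sums from the inside. The graph $\oga_{i_{1},\dots,i_{j-1}}$ has $v-j+1$ vertices and is bridgeless (contraction of a bridgeless graph is bridgeless, and self-loops that appear during contraction contribute $\frac{R}{L+R}=0$, so they cause no difficulty). Applying \eqnref{eqn genus} to $\oga_{i_{1},\dots,i_{j-1}}$ gives
\[
\sum_{e_{i_{j}}\in\ee{\oga_{i_{1},\dots,i_{j-1}}}}\frac{R_{i_{j}}}{L_{i_{j}}+R_{i_{j}}}\ =\ v-j.
\]
Since this value is independent of the particular choices $i_{1},\dots,i_{j-1}$, the nested sum collapses telescopically to
\[
S\ =\ (v-1)(v-2)\cdots(v-k)\ =\ \frac{(v-1)!}{(v-k-1)!}.
\]

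Combining this evaluation with the inequalities above and with \propref{prop succes contr dif} yields
\[
c_{k}\,\frac{(v-1)!}{(v-k-1)!}\ \leq\ \frac{k(v-2)!}{(v-k-1)!}\,r(\ga)\ \leq\ C_{k}\,\frac{(v-1)!}{(v-k-1)!},
\]
and dividing through by $\frac{k(v-2)!}{(v-k-1)!}$ produces the main inequality $\frac{v-1}{k}c_{k}\le r(\ga)\le\frac{v-1}{k}C_{k}$. The stated special case follows immediately by taking $k=v-2$.

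The only delicate point is the evaluation of $S$: one must justify that \eqnref{eqn genus} applies at every stage of the nested sum, including the possibility that earlier contractions produce multiple edges or self-loops. Since self-loops carry weight $0$ and \eqnref{eqn genus} holds for any connected metrized graph (with $R_i=\infty$ on bridges interpreted as a weight $1$), the iterated evaluation goes through without issue; this is the main, though mild, obstacle.
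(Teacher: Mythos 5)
Your proposal is correct and follows exactly the route the paper intends: its proof is the one-line remark that the result follows from \propref{prop succes contr dif} and \eqnref{eqn genus}, and your argument simply fills in the details of bounding $\sum_{t=1}^{k}L_{i_t}$ by $c_k$ and $C_k$ and collapsing the nested sum to $\frac{(v-1)!}{(v-k-1)!}$ via repeated application of \eqnref{eqn genus}. Your care with self-loops (weight $0$) is a correct and welcome elaboration of a point the paper leaves implicit.
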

\begin{proof}
The result follows from \propref{prop succes contr dif} and
\eqnref{eqn genus}.
\end{proof}
Note also that successive application of the first part of \corref{cor contraction1} gives
\begin{equation}\label{eqn successive contraction z}
\begin{split}
\frac{(v-1)!}{(v-k-1)!} z(\ga) &=\sum_{\substack{e_{i_1} \in \\
\ee{\ga}}}\frac{R_{i_1}}{L_{i_1}+R_{i_1}} \; \; \dots
\sum_{\substack{e_{i_{k}} \in
\\ \ee{\oga_{i_1, \dots, i_{k-1}}}}}\frac{R_{i_{k}}}{L_{i_{k}}+R_{i_{k}}}
z(\oga_{i_1, \dots, i_{k}}).
\end{split}
\end{equation}
The following theorem generalizes \thmref{thm tau contract}.
\begin{theorem}\label{thm succes contr1}
Let $\ga$ be a bridgeless metrized graph with $(k+2) \leq v=\#(\vv{\ga)}$ for
some integer $k \geq 1$. Then
\begin{equation*}
\begin{split}
\ta{\ga} & =\frac{(v-k-2)!}{(v-2)!} \sum_{e_{i_1} \in \;
\ee{\ga}}\frac{R_{i_1}}{L_{i_1}+R_{i_1}}
\; \dots \sum_{\substack{e_{i_{k}} \in \\ \ee{\oga_{i_1, \dots, i_{k-1}}}}}
\frac{R_{i_k}}{L_{i_k}+R_{i_k}}\ta{\oga_{i_1,\dots, i_k}}
-\frac{k \cdot z(\ga)}{12(v-k-1)}.
\end{split}
\end{equation*}
\end{theorem}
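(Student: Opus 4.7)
My plan is to prove the identity by induction on $k$, with the base case $k=1$ given by \thmref{thm tau contract}. The inductive step combines \thmref{thm tau contract} (applied inside the $k$-fold sum) with the successive contraction formula for $z(\ga)$ given in \eqnref{eqn successive contraction z}.

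For the inductive step, assume the identity holds for some $k$ with $k+2 < v$. Since each contracted graph $\oga_{i_1,\dots, i_k}$ has $v-k \geq 3$ vertices, I can apply \thmref{thm tau contract} to each such graph to obtain
\[
\ta{\oga_{i_1,\dots, i_k}}
= \frac{1}{v-k-2}\sum_{e_{i_{k+1}}\in \ee{\oga_{i_1,\dots,i_k}}}\frac{R_{i_{k+1}}}{L_{i_{k+1}}+R_{i_{k+1}}}\ta{\oga_{i_1,\dots, i_{k+1}}}
-\frac{z(\oga_{i_1,\dots, i_k})}{12(v-k-2)}.
\]
Substituting this back into the inductive hypothesis produces two contributions: a $(k+1)$-fold nested sum weighted by $\frac{(v-k-2)!}{(v-2)!(v-k-2)}=\frac{(v-k-3)!}{(v-2)!}$ which has exactly the shape the theorem predicts for $k+1$, and a $z$-contribution which I must reduce to a multiple of $z(\ga)$.

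The key simplification is to apply \eqnref{eqn successive contraction z}, which gives
\[
\sum_{e_{i_1}\in \ee{\ga}}\frac{R_{i_1}}{L_{i_1}+R_{i_1}}\,\cdots \sum_{e_{i_k}\in \ee{\oga_{i_1,\dots,i_{k-1}}}}\frac{R_{i_k}}{L_{i_k}+R_{i_k}}\,z(\oga_{i_1,\dots,i_k}) \; = \; \frac{(v-1)!}{(v-k-1)!}\,z(\ga).
\]
Hence the new $z$-contribution coming from the inductive step combines with the existing $-\frac{k\, z(\ga)}{12(v-k-1)}$ to yield a coefficient of the form
\[
-\frac{1}{12}\left[\frac{(v-k-2)!}{(v-2)!}\cdot\frac{1}{v-k-2}\cdot\frac{(v-1)!}{(v-k-1)!}+\frac{k}{v-k-1}\right]z(\ga).
\]
The main task is the algebraic verification that this bracket equals $\frac{k+1}{v-k-2}$, completing the inductive step. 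Putting everything over the common denominator $(v-k-1)(v-k-2)$ reduces this to the identity $(v-1)+k(v-k-2)=(k+1)(v-k-1)$, which is immediate.

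The main obstacle is purely bookkeeping: keeping track of the factorial coefficients through the substitution and making sure the identity $(v-1)+k(v-k-2)=(k+1)(v-k-1)$ is applied correctly to collapse the two $z$-contributions into the single term predicted by the theorem. No new conceptual ingredient beyond \thmref{thm tau contract} and \eqnref{eqn successive contraction z} is needed; bridgelessness is preserved under contraction, and contraction of a self-loop contributes zero since the associated factor $R_{i_j}/(L_{i_j}+R_{i_j})$ vanishes, so the sums are well-defined at each stage of the induction.
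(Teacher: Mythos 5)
Your proof is correct, but it takes a genuinely different route from the paper. The paper's proof is a two-line deduction from the $x$--$y$ machinery: it writes $\ta{\oga_{i_1,\dots,i_k}}=\frac{\ell(\oga_{i_1,\dots,i_k})}{12}-\frac{x(\oga_{i_1,\dots,i_k})-y(\oga_{i_1,\dots,i_k})}{6}$ via \eqnref{eqn tau x and y}, notes $\ell(\oga_{i_1,\dots,i_k})=\ell(\ga)-\sum_{t=1}^{k}L_{i_t}$, and then invokes \thmref{thm succesive contraction of x-y} together with \propref{prop succes contr dif} (the successive contraction identity for $r(\ga)$ with the $\sum_t L_{i_t}$ weights) to collapse everything. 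You instead bypass the $x$--$y$ decomposition entirely and run a direct induction on $k$ at the level of $\tau$ itself, iterating \thmref{thm tau contract} and absorbing the accumulated $z$-terms via \eqnref{eqn successive contraction z}; I checked the factorial bookkeeping and the identity $(v-1)+k(v-k-2)=(k+1)(v-k-1)$, and both are right, as is your handling of self-loops (the factor $R_{i_j}/(L_{i_j}+R_{i_j})$ kills those terms, so the surviving contractions all have exactly $v-k$ vertices and \thmref{thm tau contract} applies). The trade-off: the paper's argument is shorter but leans on two previously established successive-contraction results whose own proofs are inductive; yours is longer but self-contained modulo the depth-one contraction formula and the $z$-identity, and it makes transparent where the coefficient $\frac{k}{12(v-k-1)}$ comes from.
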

\begin{proof}
First, we note that $\ta{\oga_{i_1,\dots, i_k}}=\frac{\ell(\oga_{i_1,\dots, i_k})}{12}-\frac{x(\oga_{i_1,\dots, i_k})-y(\oga_{i_1,\dots, i_k})}{6}$ by \eqnref{eqn tau x and y}, and $\ell(\oga_{i_1,\dots, i_k})=\ell(\ga)-\sum_{t=1}^{k}L_{i_{t}}$. Then the result follows by applying \thmref{thm succesive contraction of x-y}
and \propref{prop succes contr dif}.
\end{proof}
\begin{corollary}\label{cor succes contr1}
Let $\ga$ be a bridgeless metrized graph with $ 3 \leq v=\#(\vv{\ga)}$. Then
\begin{equation*}
\begin{split}
\ta{\ga} =\frac{1}{(v-2)!} \sum_{\substack{e_{i_1} \in \\ \ee{\ga}}}\frac{R_{i_1}}{L_{i_1}+R_{i_1}}
\dots \sum_{\substack{e_{i_{v-2}} \in \\ \ee{\oga_{i_1, \dots, i_{v-3}}}}}
\frac{R_{i_{v-2}}}{L_{i_{v-2}}+R_{i_{v-2}}}\ta{\oga_{i_1,\dots, i_{v-2}}}
-\frac{v-2}{12} z(\ga).
\end{split}
\end{equation*}
\end{corollary}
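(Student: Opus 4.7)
The plan is to obtain this corollary as the endpoint case $k = v-2$ of \thmref{thm succes contr1}. First, I would verify that the hypothesis $(k+2) \leq v$ of \thmref{thm succes contr1} is satisfied, which with $k = v-2$ becomes $v \leq v$, holding automatically (and the assumption $v \geq 3$ ensures $k \geq 1$, which is needed so that at least one contraction is performed).

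Next, I would carry out the two coefficient simplifications that arise from substituting $k = v-2$. The prefactor of the iterated sum in \thmref{thm succes contr1} is $\frac{(v-k-2)!}{(v-2)!}$, which becomes $\frac{0!}{(v-2)!} = \frac{1}{(v-2)!}$, matching the leading factor in the statement. The subtracted term $\frac{k \cdot z(\ga)}{12(v-k-1)}$ becomes $\frac{(v-2) z(\ga)}{12 \cdot 1} = \frac{v-2}{12}z(\ga)$, matching the correction term. Finally, the index of the innermost sum $\oga_{i_1,\dots,i_{k-1}}$ becomes $\oga_{i_1,\dots,i_{v-3}}$ and the contracted graph $\oga_{i_1,\dots,i_k}$ becomes $\oga_{i_1,\dots,i_{v-2}}$, exactly as stated.

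There is essentially no obstacle here, since \thmref{thm succes contr1} was proved in full generality for all admissible $k$; the corollary is the extremal specialization where we contract as many edges as possible while still leaving the formula's sums nonempty. The only subtlety worth a brief remark is that after $v-2$ contractions the resulting graph $\oga_{i_1,\dots,i_{v-2}}$ has only two vertices (and is a bouquet of loops plus possibly parallel edges between those two vertices), so the inner term $\ta{\oga_{i_1,\dots,i_{v-2}}}$ is a tau constant of a graph whose vertex count no longer permits further application of the contraction identities, which is why $k = v-2$ is the natural stopping point.
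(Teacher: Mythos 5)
Your proposal is correct and matches the paper's own proof, which likewise just invokes \thmref{thm succes contr1} with $k=v-2$; your coefficient checks ($\frac{0!}{(v-2)!}=\frac{1}{(v-2)!}$ and $\frac{(v-2)z(\ga)}{12(v-(v-2)-1)}=\frac{v-2}{12}z(\ga)$) are exactly the routine verifications the paper leaves implicit.
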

\begin{proof}
The result follows from \thmref{thm succes contr1}
with $k=v-2$.
\end{proof}
Recall that \thmref{thm tau contract} is valid for graphs with more than $2$ vertices.
If an edge $e_{i_k}$ is not a self loop in $\oga_{i_1,i_2, \dots, i_{k-1}}$, then $\#(\vv{\oga_{i_1,i_2, \dots, i_k}})= \#(\vv{\oga_{i_1,i_2, \dots, i_{k-1}}})-1$. We call $\oga_{i_1,i_2, \dots, i_{v-2}}$ be an \textit{admissible contraction} of $\ga$, if it is obtained from $\ga$ by contracting edges with distinct end points, i.e.,
if we have $\#(\vv{\oga_{i_1,i_2, \dots, i_{v-2}}})=2$. Note that such graphs are the only ones that contribute to the sum in \corref{cor succes contr1}.

Let $\oga_{i_1,i_2, \dots, i_{v-2}}$ be an admissible contraction of $\ga$, and let $\vv{\oga_{i_1,\dots, i_{v-2}}}=\{p, q\}$.
The graph $\oga_{i_1,\dots, i_{v-2}}$ has $n(i_1, \dots, i_{v-2})$ multiple edges between the vertices $p$ and $q$, and self-loops at $p$ or $q$. Figure \ref{fig contract1} illustrates $\oga_{i_1,\dots, i_{v-2}}$.
\begin{figure}
\centerline{\epsffile{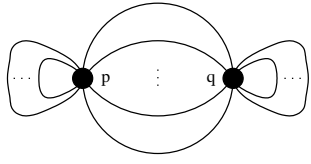}} \caption{A Banana graph with self loops.} \label{fig contract1}
\end{figure}
%
Let $n':=n(i_1, \dots, i_{v-2})$ be the number of multiple edges in $\oga_{i_1,\dots, i_{v-2}}$,
and let $B':=\{e_{j_1}, e_{j_2}, \dots e_{j_{n'}}\}$ be the set of multiple edges
in $\oga_{i_1,\dots, i_{v-2}}$. For the resistance function $r'(x,y)$ in $\oga_{i_1,\dots, i_{v-2}}$,
we have
$r'(p,q)=\frac{1}{\sum_{t=1}^{n'}\frac{1}{L_{j_t}}}$
by circuit theory.
Therefore,
\begin{proposition}\label{prop banana x and y}
Let $\ga$ be a bridgeless metrized graph. Using the notation above, for each admissible contraction
$\oga_{i_1, \dots, i_{v-2}}$ of $\ga$, we have
\begin{equation*}
\begin{split}
x(\oga_{i_1, \dots, i_{v-2}}) =(n'-1) \cdot r'(p,q), \qquad \quad y(\oga_{i_1, \dots, i_{v-2}}) = r'(p,q).
\end{split}
\end{equation*}
\end{proposition}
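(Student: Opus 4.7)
The plan is to evaluate the defining formulas of $x(\oga_{i_1,\dots,i_{v-2}})$ and $y(\oga_{i_1,\dots,i_{v-2}})$ directly, exploiting the fact that $\oga_{i_1,\dots,i_{v-2}}$ has only two vertices and therefore a very explicit circuit description. Write $\oga := \oga_{i_1,\dots,i_{v-2}}$ and $r := r'(p,q)$, and take $p$ itself as the distinguished reference vertex in the definitions of $x(\oga)$ and $y(\oga)$.

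First I would show that self-loops contribute nothing to either sum. If $e_i \in \ee{\oga}$ is a self-loop at $p$ or $q$, its two endpoints coincide, so $R_i = 0$; the summands $\frac{L_i^2 R_i}{(L_i+R_i)^2}$ and $\frac{L_i R_i^2}{(L_i+R_i)^2}$ vanish immediately, and since $R_{a_i,p}, R_{b_i,p} \geq 0$ with $R_{a_i,p}+R_{b_i,p} = R_i = 0$, the quantity $(R_{a_i,p}-R_{b_i,p})^2$ also vanishes. Hence only the multiple edges $B' = \{e_{j_1},\dots,e_{j_{n'}}\}$ between $p$ and $q$ contribute to $x(\oga)$ and $y(\oga)$.

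Next, for each $e_{j_t} \in B'$ I would use the parallel-circuit identity $1/r = \sum_{s=1}^{n'} 1/L_{j_s}$, which implies $1/R_{j_t} = 1/r - 1/L_{j_t}$ and hence the clean formulas
\[
\frac{L_{j_t} R_{j_t}}{L_{j_t}+R_{j_t}} = r, \qquad \frac{L_{j_t}^2 R_{j_t}}{(L_{j_t}+R_{j_t})^2} = r - \frac{r^2}{L_{j_t}}, \qquad \frac{L_{j_t} R_{j_t}^2}{(L_{j_t}+R_{j_t})^2} = \frac{r^2}{L_{j_t}}.
\]
For the $Y$-reduction term, since $\{p_{j_t},q_{j_t}\} = \{p,q\}$ and the distinguished reference vertex is also $p$, two of the three reference points of the $Y$-reduction in $\oga - e_{j_t}$ coincide. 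The identities $\hat j_z(z,y)=0$ and $\hat j_z(y,y)= r'(z,y)$ then force $\{R_{a_{j_t},p}, R_{b_{j_t},p}\} = \{0, R_{j_t}\}$, so $(R_{a_{j_t},p} - R_{b_{j_t},p})^2 = R_{j_t}^2$ and the $(R_a - R_b)^2$ summand equals $r^2/L_{j_t}$.

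Finally I would substitute and sum. For $y(\oga)$ the two sums combine to $\sum_{t=1}^{n'} r^2/L_{j_t} = r^2 \cdot (1/r) = r$, giving the second identity. For $x(\oga)$, the $\frac{3}{4} \frac{L R^2}{(L+R)^2}$ term and the $-\frac{3}{4}\frac{L(R_a - R_b)^2}{(L+R)^2}$ term cancel, leaving $\sum_{t=1}^{n'} (r - r^2/L_{j_t}) = n' r - r = (n'-1) r$, giving the first identity. The only step requiring real care is the $Y$-reduction computation of $R_{a_{j_t},p}$ and $R_{b_{j_t},p}$; I expect this to be the main (and only) obstacle, but it collapses immediately once one notes that one of $p_{j_t}, q_{j_t}$ already equals the chosen reference vertex $p$.
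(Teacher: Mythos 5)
Your proposal is correct and follows essentially the same route as the paper: both reduce to the multiple edges between $p$ and $q$ (self-loops contributing nothing), both hinge on the observation that $(R_{a_i,p}-R_{b_i,p})^2=R_i^2$ because the reference vertex coincides with an endpoint of each edge, and both evaluate the remaining sums via the parallel-resistance relation. The only cosmetic differences are that the paper evaluates $\sum_{e_t\in B'}\frac{L_t}{L_t+R_t}=n'-1$ by the genus formula and obtains $y$ from $r(\oga_{i_1,\dots,i_{v-2}})=x+y=n'\cdot r'(p,q)$, whereas you compute both sums directly from $\sum_t 1/L_{j_t}=1/r'(p,q)$.
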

\begin{proof}
First note that $r'(p,q)=\frac{L_{t}R_{t}}{L_{t}+R_{t}}$ for each $e_t \in B'$, and $R_{i_{t}}=0$ if $e_{i_{t}} \not \in B'$.
\begin{equation}\label{eqn banana r}
\begin{split}
r(\oga_{i_1, \dots, i_{v-2}})= \sum_{e_{t} \; \in B'} \frac{L_{t}R_{t}}{L_{t}+R_{t}} = n' \cdot r'(p,q).
\end{split}
\end{equation}
Moreover, $(R_{a_{i},p}-R_{b_{i},p})^2=\ri^2$ for each $e_i \in \ee{\oga_{i_1, \dots, i_{v-2}}}$. Thus, by definition
$x(\oga_{i_1, \dots, i_{v-2}})=\sum_{e_{i_{v-1}} \in \ee{\oga_{i_1, \dots, i_{v-2}}}}
\frac{L_{i_{v-1}}^2R_{i_{v-1}}}{(L_{i_{v-1}}+R_{i_{v-1}})^2}=\sum_{e_{t} \; \in B'} \frac{L_{t}^2R_{t}}{(L_{t}+R_{t})^2}=
r'(p,q) \sum_{e_{t} \; \in B'} \frac{L_{t}}{L_{t}+R_{t}}=r'(p,q)(n'-1)$, where the last equality follows from \eqnref{eqn genus}.
This proves the first equality, and the second equality follows from the first equality and \eqnref{eqn banana r}.
\end{proof}
Here is another formula for the tau constant:
\begin{theorem}\label{thm succes main}
Let $\ga$ be a bridgeless metrized graph with $ 3 \leq v =\#(\ee{\ga})$. Let $p$, $q$, $n'$ and $B'$ be as defined above.
\begin{equation*}
\begin{split}
\ta{\ga} & =\frac{\ell(\ga)}{12}- \frac{1}{6 \cdot (v-2)!}
\sum_{e_{i_1} \in \; \ee{\ga}}\frac{R_{i_1}}{L_{i_1}+R_{i_1}}
\; \;
\dots \sum_{\substack{e_{i_{v-2}} \in \\ \ee{\oga_{i_1, \dots, i_{v-3}}}}}
\frac{R_{i_{v-2}}}{L_{i_{v-2}}+R_{i_{v-2}}} (n'-2) r'(p,q).
\end{split}
\end{equation*}
\end{theorem}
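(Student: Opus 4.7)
The plan is to substitute the banana-graph formulas of Proposition~\ref{prop banana x and y} into the full-depth expression for $\tg$ given by Corollary~\ref{cor succes contr1}, and then to simplify the resulting nested sums via the previously established identities.

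First I would note (as already observed just before the theorem statement) that only admissible contractions contribute to the sum in Corollary~\ref{cor succes contr1}: whenever $e_{i_k}$ is a self-loop of $\oga_{i_1,\dots,i_{k-1}}$ we have $R_{i_k}=0$, killing the factor $R_{i_k}/(L_{i_k}+R_{i_k})$. For an admissible contraction, $\oga_{i_1,\dots,i_{v-2}}$ has exactly two vertices $p,q$, and combining Equation~(\ref{eqn tau x and y}), Proposition~\ref{prop banana x and y}, and $\ell(\oga_{i_1,\dots,i_{v-2}})=\ell(\ga)-\sum_{t=1}^{v-2}L_{i_t}$ yields
\begin{equation*}
\ta{\oga_{i_1,\dots,i_{v-2}}} \;=\; \frac{\ell(\ga)-\sum_{t=1}^{v-2}L_{i_t}}{12} \;-\; \frac{(n'-2)\,r'(p,q)}{6}.
\end{equation*}

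Plugging this into Corollary~\ref{cor succes contr1} splits $\ta{\ga}$ into three pieces together with the leftover $-\tfrac{v-2}{12}z(\ga)$. The $\tfrac{1}{12}\ell(\ga)$ piece equals $\tfrac{\ell(\ga)}{12(v-2)!}$ times the nested product of sums $\sum \tfrac{R_{i_t}}{L_{i_t}+R_{i_t}}$; iteratively applying \eqnref{eqn genus} to the (bridgeless) graphs $\ga,\oga_{i_1},\dots,\oga_{i_1,\dots,i_{v-3}}$, each inner sum evaluates to the current vertex-count minus one, so the product telescopes to $(v-1)(v-2)\cdots 2=(v-1)!$ and this piece contributes $(v-1)\ell(\ga)/12$. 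The piece carrying $\sum_{t=1}^{v-2}L_{i_t}$ is evaluated directly by \corref{cor succes contr dif} and contributes $-(v-2)r(\ga)/12$. The $(n'-2)r'(p,q)$ piece is left untouched and is exactly the expression in the theorem.

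Combining these four contributions,
\begin{equation*}
\frac{(v-1)\ell(\ga)-(v-2)r(\ga)-(v-2)z(\ga)}{12} \;=\; \frac{\ell(\ga)}{12}
\end{equation*}
by $\ell(\ga)=z(\ga)+r(\ga)$, and the claim follows. The only delicate point is the telescoping of the iterated sum of products via \eqnref{eqn genus}; since contracting an edge in a bridgeless graph leaves a bridgeless graph, every intermediate $\oga_{i_1,\dots,i_{k-1}}$ is bridgeless and the identity applies.
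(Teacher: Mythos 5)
Your proof is correct, but it enters the successive-contraction machinery at a different point than the paper does. The paper's own proof is a two-line affair: it applies \thmref{thm succesive contraction of x-y} with $k=v-2$ (whose right-hand side already has $\frac{\ell(\ga)}{12}$ isolated and carries exactly the nested sum of $x-y$ terms) and then substitutes $x(\oga_{i_1,\dots,i_{v-2}})-y(\oga_{i_1,\dots,i_{v-2}})=(n'-2)\,r'(p,q)$ from \propref{prop banana x and y}. You instead start from \corref{cor succes contr1}, which expresses $\ta{\ga}$ via the nested sum of $\ta{\oga_{i_1,\dots,i_{v-2}}}$ minus $\frac{v-2}{12}z(\ga)$, and must then re-separate the $\ell(\ga)$, $\sum_t L_{i_t}$, and $(n'-2)r'(p,q)$ pieces: the telescoping of \eqnref{eqn genus} to get $(v-1)!$, the use of \corref{cor succes contr dif} to evaluate the length piece as $(v-2)(v-2)!\,r(\ga)$, and the final cancellation $(v-1)\ell(\ga)-(v-2)r(\ga)-(v-2)z(\ga)=\ell(\ga)$. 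All of these steps check out (in particular your observation that self-loop branches die because $R_{i_k}=0$, so the surviving branches have deterministic vertex counts and the iterated sums do telescope). Since \corref{cor succes contr1} is itself derived from \thmref{thm succesive contraction of x-y} via \propref{prop succes contr dif}, your argument essentially makes a round trip through that derivation; it buys nothing extra but costs some bookkeeping that the paper avoids by invoking the $x-y$ version of the successive contraction formula directly. The key substitution --- the banana-graph identity $x-y=(n'-2)r'(p,q)$ --- is the same in both.
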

\begin{proof}
We have $x(\oga_{i_1, \dots, i_{v-2}})-y(\oga_{i_1, \dots, i_{v-2}}) =(n'-2) \cdot r'(p,q)$, by \propref{prop banana x and y}. Therefore, we obtain what we want by using \thmref{thm succesive contraction of x-y} with $k=v-2$.
\end{proof}

\section{Edge connectivity and the tau constant}\label{section edge connectivity}

In this section, we will prove that Conjecture
~\ref{TauBound} holds with $C=\frac{1}{108}$ for any graph $\ga$ with edge connectivity more than or equal to $6$, and
we will give a lower bound to the tau constant in terms of edge connectivity.

Let $\ga$ be a bridgeless metrized graph, and let $\oga_{i_1, \dots,
i_{v-2}}$, $n'$, $p$, $q$, $r'(p,q)$ and $B'$ be as in
\secref{section successive edge contraction}.
Recall that $n':=n(i_1, \dots, i_{v-2})$ is the
number of multiple edges in $\oga_{i_1,\dots, i_{v-2}}$ and that
$B':=\{e_{1}, e_{2}, \dots e_{n'}\}$ is the set of multiple edges in
$\oga_{i_1,\dots, i_{v-2}}$. We will show that a lower bound for
$$N(\ga):=\min \{ n' | \{i_1, \dots, i_{v-2}\}
\subset \{1,2, \dots, e \} \}.$$ gives a lower bound for $\tg$.
We will make some observations about $N(\ga)$ after recalling some basic definitions from graph theory.

We recall the following inequality between the edge connectivity $\Lambda(\ga)$
, vertex connectivity $\kappa(\ga)$, and the minimum degree of the valences $\delta(\ga)$.
\begin{remark}\label{edgecon-mindegree-vertexcon}
For a graph $\ga$, we have $\kappa(\ga) \leq \Lambda(\ga) \leq \delta(\ga)$
by basic graph theory \cite[pg. 3]{BB1}.
\end{remark}
Recall that a metrized graph is connected by definition.
\begin{lemma}\label{lem edge connectivity}
Let $\ga$ be a graph. Then $N(\ga)=\Lambda(\ga).$
\end{lemma}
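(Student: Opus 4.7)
The plan is to identify admissible contractions of $\ga$ with bipartitions of $\vv{\ga}$ whose two parts induce connected subgraphs, and to interpret $n'$ as the size of the corresponding edge cut. Since $\Lambda(\ga)$ is by definition the minimum size of an edge cut in $\ga$, the claim will then reduce to showing the correspondence is surjective onto minimum cuts.

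For $N(\ga)\ge\Lambda(\ga)$, I would argue that an admissible contraction $\oga_{i_1,\dots,i_{v-2}}$ corresponds to a spanning forest $F:=\{e_{i_1},\dots,e_{i_{v-2}}\}$ of $\ga$ with exactly two components. Indeed, each contraction of a distinct-endpoint edge reduces the vertex count by $1$, and a cycle in $F$ would, at some stage of the sequence, become a self-loop in the intermediate graph and thus violate admissibility; since $|F|=v-2$, the forest has $v-|F|=2$ components. Let $S_p,S_q\subset\vv{\ga}$ denote the vertex sets of these two components: then $\ga[S_p]$ and $\ga[S_q]$ are connected, and each $e\in\ee{\ga}\setminus F$ becomes a self-loop at $p$ or $q$ precisely when both its endpoints lie in the same part, and one of the multiple edges between $p$ and $q$ otherwise. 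Hence $n'$ equals the number of edges of $\ga$ crossing the bipartition $(S_p,S_q)$, which is at least $\Lambda(\ga)$.

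For $N(\ga)\le\Lambda(\ga)$, I would start from any bipartition $(S,\vv{\ga}\setminus S)$ realizing $\Lambda(\ga)$. The classical observation that both $\ga[S]$ and $\ga[\vv{\ga}\setminus S]$ must be connected under a minimum cut applies: if, say, $\ga[S]$ had components $S_1,\dots,S_k$ with $k\ge2$, then no edge of $\ga$ joins distinct $S_j$'s, so
\[
|E_\ga(S_1,\vv{\ga}\setminus S_1)|=|E_\ga(S,\vv{\ga}\setminus S)|-\sum_{j\ge 2}|E_\ga(S_j,\vv{\ga}\setminus S_j)|<|E_\ga(S,\vv{\ga}\setminus S)|,
\]
the strict inequality coming from $|E_\ga(S_j,\vv{\ga}\setminus S_j)|\ge 1$ by the connectedness of $\ga$, contradicting minimality of the cut. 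Given this, I would take spanning trees of $\ga[S]$ and $\ga[\vv{\ga}\setminus S]$; together they form a spanning forest with $v-2$ edges and two components. Contracting these edges in any order is admissible (a tree edge has distinct endpoints after contraction of any other tree edges), and the resulting contraction has exactly $n'=|E_\ga(S,\vv{\ga}\setminus S)|=\Lambda(\ga)$ multiple edges between its two vertices.

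I do not expect any serious obstacle here. The only delicate point is the bookkeeping of self-loops versus multiple edges under iterated contraction, which the correspondence above makes transparent; everything else is standard combinatorial graph theory.
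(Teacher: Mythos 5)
Your proof is correct, and while it establishes the same two inequalities as the paper, the mechanisms are genuinely different in both directions. For $N(\ga)\ge\Lambda(\ga)$, the paper simply invokes the fact that contracting an edge cannot decrease the edge connectivity, so that $n'=\Lambda(\oga_{i_1,\dots,i_{v-2}})\ge\Lambda(\ga)$; you instead identify an admissible contraction with a two-component spanning forest and read off $n'$ directly as the size of the edge cut determined by the resulting bipartition. Your route is more self-contained (the monotonicity of $\Lambda$ under contraction, while standard, is asserted without proof in the paper, and your cycle-becomes-self-loop argument effectively proves the relevant instance of it) and it makes explicit the correspondence between admissible contractions and connected bipartitions, which clarifies why only such contractions contribute to the sums in \corref{cor succes contr1}. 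For $N(\ga)\le\Lambda(\ga)$, the paper chooses a minimum disconnecting set $\{e_1,\dots,e_k\}$ that is minimal in the sense that removing any $k-1$ of its edges leaves $\ga$ connected, so that $e_k$ is a bridge in $\ga-\{e_1,\dots,e_{k-1}\}$ and the decomposition $\beta\cup e_k\cup\gamma$ hands it the two connected sides to contract; you instead prove directly the classical fact that both sides of a minimum cut induce connected subgraphs and then contract spanning trees of each side. The two constructions produce the same admissible contraction; yours requires the short component-counting inequality, while the paper's requires the careful initial choice of the cut set. Either way the argument is complete, and your bookkeeping of self-loops versus multiple edges under iterated contraction is handled correctly.
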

\begin{proof}
If $\vv{\ga} = 2$, then $\ga$ is a banana graph with possibly self-loops.
Then $N(\ga)=\Lambda(\ga)$ clearly.

Note that when we contract an edge of a graph $\ga$ with $\vv{\ga} \geq 3$,
the edge connectivity either does not change or increases. Therefore, $\Lambda(\oga_{i_1, \dots,
i_{v-2}}) \geq \Lambda(\ga)$ for the contraction of any edges
$e_{i_1}, \dots, e_{i_{v-2}}.$ Since $n'=\Lambda(\oga_{i_1, \dots,
i_{v-2}}) \geq \Lambda(\ga)$, we have $N(\ga)\geq \Lambda(\ga)$.

Let $k=\Lambda(\ga)$, and let $e_1, e_2, \dots e_k$ be
edges such that $\ga-\{e_1, e_2, \dots e_k \}$ is disconnected but
$\ga-(\{e_1, e_2, \dots e_k \}-e_j)$ is connected for each $e_j$
where $1 \leq j \leq k$. Also, let $p$ and $q$ be the end points of
the edge $e_k$. Note that $e_k$ is a bridge in $\ga-\{e_1, e_2,
\dots e_{k-1} \}$. That is, $\ga-\{e_1, e_2, \dots e_{k-1} \}=\beta
\cup e_k \cup \gamma$ for some graphs $\beta$ and $\gamma$ with
$\beta \cap e_k=\{p \}$ and $\gamma \cap e_k=\{q \}$. Contract
edges in $\ee{\beta}$, say $e_{i_1}, e_{i_2}, \dots e_{i_s}$, until
$\beta$ has $1$ vertex. Similarly, contract edges in $\ee{\gamma}$,
say $e_{l_1}, e_{l_2}, \dots e_{l_t}$, until $\gamma$ has $1$
vertex. Then, $s+t=v-2$ and $n(i_1, i_2, \dots, i_{s}, l_1, l_2,
\dots, l_t)=k$ for the contraction graph $\oga_{i_1, i_2, \dots,
i_{s}, l_1, l_2, \dots, l_t}$. Thus, $N(\ga) \leq \Lambda(\ga).$

Hence, the result follows.
\end{proof}
We will need the following computation before we relate the edge connectivity $\Lambda(\ga)$
to $\ta{\ga}$.
\begin{corollary}\label{cor edge connectivity}
Let $\ga$ be a bridgeless metrized graph with genus $g$. Then for any admissible contraction $\oga_{i_1, \dots, i_{v-2}}$ of $\ga$ we have
$$g \cdot y(\oga_{i_1, \dots, i_{v-2}}) \geq x(\oga_{i_1, \dots, i_{v-2}}) \geq (\Lambda(\ga)-1) \cdot y(\oga_{i_1, \dots, i_{v-2}}).$$
\end{corollary}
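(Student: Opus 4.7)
The plan is to reduce the inequality to a statement about the integer $n'$ counting the multi-edges between the two vertices of the admissible contraction, and then squeeze $n'$ between $\Lambda(\ga)$ and $g+1$.

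First I would invoke \propref{prop banana x and y} to rewrite the two quantities of interest. Since $\oga_{i_1,\dots,i_{v-2}}$ has exactly two vertices $p,q$ with $n'$ multi-edges between them and some self-loops, the proposition gives
\[
x(\oga_{i_1,\dots,i_{v-2}}) = (n'-1)\, r'(p,q),\qquad y(\oga_{i_1,\dots,i_{v-2}}) = r'(p,q).
\]
Since $r'(p,q)>0$, the desired chain of inequalities is equivalent to $\Lambda(\ga)-1 \leq n'-1 \leq g$, i.e., $\Lambda(\ga) \leq n' \leq g+1$.

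For the lower bound $n' \geq \Lambda(\ga)$, I would apply \lemref{lem edge connectivity}, which states $N(\ga) = \Lambda(\ga)$. By the definition of $N(\ga)$ as a minimum over all admissible contractions, any particular $n' = n(i_1,\dots,i_{v-2})$ satisfies $n' \geq N(\ga) = \Lambda(\ga)$, yielding $x(\oga_{i_1,\dots,i_{v-2}}) \geq (\Lambda(\ga)-1)\,y(\oga_{i_1,\dots,i_{v-2}})$.

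For the upper bound $n' \leq g+1$, the key observation is that contracting an edge with distinct endpoints simultaneously decreases the number of vertices and the number of edges by one, so the first Betti number $e-v+1$ is unchanged. Hence an admissible contraction preserves the genus, and $g(\oga_{i_1,\dots,i_{v-2}}) = g$. On the other hand, $\oga_{i_1,\dots,i_{v-2}}$ has $2$ vertices and $n' + s$ edges, where $s \geq 0$ is the number of self-loops, so its genus equals $n' + s - 1$. Equating the two expressions gives $n' - 1 = g - s \leq g$, which yields $x(\oga_{i_1,\dots,i_{v-2}}) \leq g\cdot y(\oga_{i_1,\dots,i_{v-2}})$. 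There is no real obstacle here; the whole argument rests on the genus-invariance of admissible contractions combined with \propref{prop banana x and y} and \lemref{lem edge connectivity}, both of which are already in hand.
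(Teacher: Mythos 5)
Your proposal is correct and follows essentially the same route as the paper: both reduce the claim via \propref{prop banana x and y} to the bounds $\Lambda(\ga)\leq n'\leq g+1$, obtain the lower bound from \lemref{lem edge connectivity}, and obtain the upper bound by counting the $e-(v-2)=g+1$ edges of the contracted graph (you phrase this count via genus-invariance of admissible contractions, which is the same computation). No gaps.
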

\begin{proof}
Since $\oga_{i_1, \dots, i_{v-2}}$ has $e-(v-2)=g+1$ edges, $g+1 \geq \max \{ n' | \{i_1, \dots, i_{v-2}\}
\subset \{1,2, \dots, e \} \}$. Then the first inequality follows from \propref{prop banana x and y}.
The second inequality follows from \lemref{lem edge connectivity} and \propref{prop banana x and y}.
\end{proof}
When $k=v-2$, \eqnref{eqn successive contraction z} becomes
\begin{equation}\label{eqn z contr}
\begin{split}
(v-1)! z(\ga)= \sum_{\substack{e_{i_1} \in \\
\ee{\ga}}}\frac{R_{i_1}}{L_{i_1}+R_{i_1}}
\dots \sum_{\substack{e_{i_{v-2}} \in \\ \ee{\oga_{i_1, \dots,
i_{v-3}}}}} \frac{R_{i_{v-2}}}{L_{i_{v-2}}+R_{i_{v-2}}}
\sum_{\substack{e_{i_{v-1}} \in \\ \ee{\oga_{i_1, \dots, i_{v-2}}}}}
\frac{L_{i_{v-1}}^2}{L_{i_{v-1}}+R_{i_{v-1}}}.
\end{split}
\end{equation}
\begin{lemma}\label{lem z contr2}
For each admissible contraction $\oga_{i_1, \dots, i_{v-2}}$ of $\ga$ as above we have
\begin{equation*}
\begin{split}
\sum_{e_t \in B'}\frac{L_t^2}{L_t+R_t}\geq n' \cdot (n'-1) r'(p,q).
\end{split}
\end{equation*}
\end{lemma}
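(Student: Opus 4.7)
The plan is to rewrite each summand $\frac{L_t^2}{L_t+R_t}$ in closed form using the parallel-resistance relation, then reduce the inequality to Cauchy--Schwarz.

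First I would exploit the fact that in the admissible contraction $\oga_{i_1,\dots,i_{v-2}}$, the edge $e_t \in B'$ and its complementary ``deletion resistance'' $R_t$ are connected in parallel between $p$ and $q$, so
\[
r'(p,q) \;=\; \frac{L_t R_t}{L_t+R_t},
\]
as already noted before the statement. Solving this for $L_t+R_t$ gives $L_t+R_t = L_tR_t/r'(p,q)$, and therefore
\[
\frac{L_t^2}{L_t+R_t} \;=\; \frac{L_t\, r'(p,q)}{R_t} \;=\; r'(p,q)\Big(\frac{L_t}{r'(p,q)}-1\Big) \;=\; L_t - r'(p,q),
\]
where I used $1/R_t = 1/r'(p,q) - 1/L_t$. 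Summing over $e_t \in B'$ yields the clean identity
\[
\sum_{e_t\in B'} \frac{L_t^2}{L_t+R_t} \;=\; \Big(\sum_{e_t\in B'} L_t\Big) - n'\cdot r'(p,q).
\]

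Next, using the parallel-resistance formula $r'(p,q) = 1/\sum_{e_t\in B'} (1/L_t)$, the inequality to be proved becomes
\[
\Big(\sum_{e_t\in B'} L_t\Big)\Big(\sum_{e_t\in B'} \tfrac{1}{L_t}\Big) \;\geq\; (n')^2,
\]
which is precisely the Cauchy--Schwarz inequality applied to the vectors $(\sqrt{L_t})_{t}$ and $(1/\sqrt{L_t})_{t}$ (equivalently, the AM--HM inequality on the positive numbers $L_t$). Multiplying through by $r'(p,q)$ gives $\sum_{e_t\in B'} L_t \geq (n')^2\, r'(p,q)$, and substituting into the identity above yields
\[
\sum_{e_t\in B'} \frac{L_t^2}{L_t+R_t} \;\geq\; (n')^2\, r'(p,q) - n'\cdot r'(p,q) \;=\; n'(n'-1)\, r'(p,q),
\]
as required. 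There is no substantial obstacle here: once the $L_t^2/(L_t+R_t) = L_t - r'(p,q)$ reduction is spotted, the rest is a one-line Cauchy--Schwarz estimate; the only thing to be careful about is that self-loops in $\oga_{i_1,\dots,i_{v-2}}$ contribute nothing to $B'$ and do not affect $r'(p,q)$, so they can be ignored throughout.
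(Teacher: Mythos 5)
Your proof is correct and follows essentially the same route as the paper: the identity $\sum_{e_t\in B'}L_t^2/(L_t+R_t)=\sum_{e_t\in B'}L_t-n'\,r'(p,q)$ is the paper's decomposition of $\sum_{e_t\in B'}L_t$ combined with Equation (\ref{eqn banana r}), and your Cauchy--Schwarz step is the same Arithmetic--Harmonic Mean estimate the paper invokes. No issues.
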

\begin{proof}
We have $\frac{1}{n'}\sum_{e_t \in B'}L_t \geq \frac{n'}{\sum_{e_t \in B'}\frac{1}{L_t}}$
by Arithmetic-Harmonic Mean inequality. On the other hand, $\sum_{e_t \in B'}L_t=\sum_{e_t \in B'}\frac{L_t^2}{L_t+R_t}+\sum_{e_t \in B'}\frac{L_t R_t}{L_t+R_t}$, and $r'(p,q)=\frac{1}{\sum_{t=1}^{n'}\frac{1}{L_{j_t}}}$. Thus the result follows from \eqnref{eqn banana r}.
\end{proof}
\begin{lemma}\label{lemcor z contr}
Let $\ga$ be a bridgeless metrized graph. Then we have
$z(\ga) \geq  \frac{\Lambda(\ga)}{v-1} x(\ga).$
\end{lemma}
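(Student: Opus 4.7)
The plan is to combine Equation~(\ref{eqn z contr}), which expresses $(v-1)!\,z(\ga)$ as a weighted sum over admissible contractions of $\sum_{e \in \ee{\oga_{i_1,\dots,i_{v-2}}}} L^2/(L+R)$, with the successively applied contraction identity for $x(\ga)$ from \thmref{thm contraction}, linking the two via \lemref{lem z contr2} and \propref{prop banana x and y}.

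First I would fix an admissible contraction $\oga_{i_1,\dots,i_{v-2}}$ with banana edges $B'$ of count $n'$ and common end points $p,q$. Since for a self-loop the deletion resistance is $0$ and hence $L^2/(L+R)=L\geq 0$, dropping the self-loop contributions from the innermost sum gives
\[
\sum_{e_{i_{v-1}}\in \ee{\oga_{i_1,\dots,i_{v-2}}}} \frac{L_{i_{v-1}}^2}{L_{i_{v-1}}+R_{i_{v-1}}} \;\geq\; \sum_{e_t\in B'}\frac{L_t^2}{L_t+R_t}.
\]
\lemref{lem z contr2} then bounds the right side below by $n'(n'-1)\,r'(p,q)$. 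Writing one factor $n'$ as $n'\geq \Lambda(\ga)$ (\lemref{lem edge connectivity}) and recognizing $(n'-1)r'(p,q)=x(\oga_{i_1,\dots,i_{v-2}})$ via \propref{prop banana x and y}, this yields for every admissible contraction the pointwise inequality
\[
\sum_{e_{i_{v-1}}} \frac{L_{i_{v-1}}^2}{L_{i_{v-1}}+R_{i_{v-1}}} \;\geq\; \Lambda(\ga)\cdot x(\oga_{i_1,\dots,i_{v-2}}).
\]

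Next I would substitute this into Equation~(\ref{eqn z contr}), noting that the outer weights $\prod \frac{R_{i_j}}{L_{i_j}+R_{i_j}}$ are non-negative and vanish on non-admissible sequences (a step that contracts a self-loop contributes $R/(L+R)=0$). Then I would iterate the contraction identity $(v-2)\,x(\ga)=\sum_{e_i}\frac{R_i}{L_i+R_i}x(\oga_i)$ from \thmref{thm contraction} exactly $v-2$ times to identify
\[
\sum_{e_{i_1}\in \ee{\ga}}\frac{R_{i_1}}{L_{i_1}+R_{i_1}}\cdots\sum_{e_{i_{v-2}}}\frac{R_{i_{v-2}}}{L_{i_{v-2}}+R_{i_{v-2}}}\,x(\oga_{i_1,\dots,i_{v-2}}) \;=\; (v-2)!\,x(\ga).
\]
Combining the two displays produces $(v-1)!\,z(\ga)\geq \Lambda(\ga)\cdot(v-2)!\,x(\ga)$, and dividing by $(v-1)!$ gives exactly $z(\ga)\geq \frac{\Lambda(\ga)}{v-1}x(\ga)$.

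The main obstacle is purely bookkeeping: ensuring that in the nested-sum manipulation the outer weights match precisely between the successive-contraction expansion of $z(\ga)$ and that of $x(\ga)$, and that the $v=2$ case (no contractions, $(v-2)!=0!=1$, and $\oga=\ga$ is itself a banana with possible self-loops) is subsumed by the same argument rather than needing a separate treatment. No nontrivial estimates beyond the arithmetic-harmonic-mean step already built into \lemref{lem z contr2} are needed.
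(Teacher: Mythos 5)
Your proposal is correct and follows essentially the same route as the paper's own proof, which likewise combines Equation~(\ref{eqn z contr}), the successive contraction expansion~(\ref{eqn cont for x}) of $x(\ga)$, \propref{prop banana x and y}, \lemref{lem z contr2}, and \lemref{lem edge connectivity}; you have merely spelled out the chaining of inequalities that the paper leaves implicit. The details you supply (dropping the nonnegative self-loop contributions, and the $v=2$ base case) are the right ones and pose no difficulty.
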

\begin{proof}
If we apply the first part of \thmref{thm contraction} successively,
we derive the following expression:
\begin{equation}\label{eqn cont for x}
\begin{split}
x(\ga)=\frac{1}{(v-2)!} \sum_{e_{i_1} \in
\ee{\ga}}\frac{R_{i_1}}{L_{i_1}+R_{i_1}}
\dots \sum_{\substack{e_{i_{v-2}} \in \\ \ee{\oga_{i_1, \dots,
i_{v-3}}}}} \frac{R_{i_{v-2}}}{L_{i_{v-2}}+R_{i_{v-2}}}
x(\oga_{i_1, \dots, i_{v-2}}).
\end{split}
\end{equation}
On the other hand, for each admissible contraction $\oga_{i_1, \dots, i_{v-2}}$ of $\ga$,
$x(\oga_{i_1, \dots, i_{v-2}})=(n'-1) r(p,q)$
by \propref{prop banana x and y}. Then the result follows
from \eqnref{eqn z contr}, \lemref{lem edge connectivity}, \lemref{lem z contr2}, and \eqnref{eqn cont for x}.
\end{proof}
Set $$w(\ga):=\frac{1}{(v-2)!} \sum_{\substack{e_{i_1} \in \\
\ee{\ga}}}\frac{R_{i_1}}{L_{i_1}+R_{i_1}}
\dots \sum_{\substack{e_{i_{v-2}} \in \\ \ee{\oga_{i_1, \dots,
i_{v-3}}}}} \frac{R_{i_{v-2}}}{L_{i_{v-2}}+R_{i_{v-2}}}
\sum_{\substack{e_{i_{v-1}} \in \\ \ee{\oga_{i_1, \dots, i_{v-2}}}}}
\frac{L_{i_{v-1}}^3}{(L_{i_{v-1}}+R_{i_{v-1}})^2}. $$
Then we have
\begin{lemma}\label{lem z contr22}
Let $\ga$ be a bridgeless metrized graph.
Then $(v-1)z(\ga) = w(\ga)+x(\ga)$.
\end{lemma}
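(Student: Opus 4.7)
My plan is to start from the successive-contraction formula for $z(\ga)$ given by \eqref{eqn z contr} (which is \eqref{eqn successive contraction z} specialized to $k=v-2$), divide through by $(v-2)!$, and split the innermost summand by the elementary algebraic identity
$$\frac{L_{i_{v-1}}^2}{L_{i_{v-1}}+R_{i_{v-1}}} \;=\; \frac{L_{i_{v-1}}^3}{(L_{i_{v-1}}+R_{i_{v-1}})^2} \;+\; \frac{L_{i_{v-1}}^2\,R_{i_{v-1}}}{(L_{i_{v-1}}+R_{i_{v-1}})^2}.$$
This breaks $(v-1)z(\ga)$ into two summations. The first is exactly $w(\ga)$ by the definition of $w$. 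The second I expect to collapse to $x(\ga)$ by virtue of \eqref{eqn cont for x}.

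\textbf{Steps.} After dividing \eqref{eqn z contr} by $(v-2)!$ and applying the split above, the $L^3/(L+R)^2$ piece matches the definition of $w(\ga)$ term-by-term. For the $L^2R/(L+R)^2$ piece, I would first note that only admissible contractions contribute: if at some intermediate stage $e_{i_j}$ is a self-loop in $\oga_{i_1,\dots,i_{j-1}}$, then its endpoints coincide so $R_{i_j}=0$, and the factor $R_{i_j}/(L_{i_j}+R_{i_j})$ kills the corresponding term. For each admissible contraction $\oga_{i_1,\dots,i_{v-2}}$ with surviving vertices $p,q$ and multi-edge set $B'$ between them, the computation inside the proof of \propref{prop banana x and y} shows
$$x(\oga_{i_1,\dots,i_{v-2}}) \;=\; \sum_{e_t\in B'}\frac{L_t^2 R_t}{(L_t+R_t)^2}.$$
Self-loops in $\oga_{i_1,\dots,i_{v-2}}$ have $R_t=0$ and contribute nothing to the innermost sum, so
$$\sum_{e_{i_{v-1}}\in\ee{\oga_{i_1,\dots,i_{v-2}}}}\frac{L_{i_{v-1}}^2 R_{i_{v-1}}}{(L_{i_{v-1}}+R_{i_{v-1}})^2} \;=\; x(\oga_{i_1,\dots,i_{v-2}}).$$
Substituting this back identifies the second piece with the right-hand side of \eqref{eqn cont for x}, which equals $x(\ga)$. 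Adding the two pieces yields $(v-1)z(\ga)=w(\ga)+x(\ga)$.

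\textbf{Main obstacle.} The algebra itself is essentially one line; the only care needed is the bookkeeping of self-loops that may appear in the intermediate contraction graphs. Once one observes that every such self-loop is annihilated by a factor of the form $R/(L+R)=0$ (either at the stage where it is produced and later contracted, or in the final innermost sum against $R_{i_{v-1}}$), the identity follows by purely formal manipulation of the nested sums.
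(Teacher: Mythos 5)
Your proof is correct and is exactly the paper's argument: the paper's one-line proof ("follows from Equations (\ref{eqn z contr}) and (\ref{eqn cont for x})") amounts to dividing \eqref{eqn z contr} by $(v-2)!$, splitting $\tfrac{L^2}{L+R}=\tfrac{L^3}{(L+R)^2}+\tfrac{L^2R}{(L+R)^2}$, and identifying the two pieces with $w(\ga)$ and with $x(\ga)$ via \eqref{eqn cont for x} together with the banana-graph computation of $x(\oga_{i_1,\dots,i_{v-2}})$ from \propref{prop banana x and y}. Your handling of the self-loop bookkeeping is the same observation the paper makes just before \propref{prop banana x and y}.
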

\begin{proof}
The result follows from Equations (\ref{eqn z contr}) and (\ref{eqn cont for x}).
\end{proof}
\begin{theorem}\cite[Theorem 2.26]{C2}\label{thm2term}
Let $\ga$ be a normalized metrized graph. Then
$$\sum_{e_i \in \, \ee{\ga}}\frac{\li\ri^2}{(\li+\ri)^2} \geq \Big(\sum_{e_i \in \, \ee{\ga}}\frac{\li\ri}{\li+\ri}\Big)^2.$$
\end{theorem}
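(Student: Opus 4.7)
The plan is to recognize the inequality as a direct application of the Cauchy--Schwarz inequality, with the normalization $\ell(\ga) = 1$ playing the crucial role of making the bound sharp in the right way.

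First I would rewrite each summand so that a common factor of $\li$ is exposed:
\[
\frac{\li \ri}{\li+\ri} = \sqrt{\li} \cdot \Bigl(\sqrt{\li}\cdot \frac{\ri}{\li+\ri}\Bigr), \qquad \frac{\li \ri^2}{(\li+\ri)^2} = \Bigl(\sqrt{\li}\cdot \frac{\ri}{\li+\ri}\Bigr)^2.
\]
Setting $a_i := \sqrt{\li}$ and $b_i := \sqrt{\li}\cdot \frac{\ri}{\li+\ri}$, the Cauchy--Schwarz inequality $\bigl(\sum_i a_i b_i\bigr)^2 \leq \bigl(\sum_i a_i^2\bigr)\bigl(\sum_i b_i^2\bigr)$ immediately gives
\[
\Bigl(\sum_{e_i \in \ee{\ga}} \frac{\li \ri}{\li+\ri}\Bigr)^2 \;\leq\; \Bigl(\sum_{e_i \in \ee{\ga}} \li\Bigr) \cdot \Bigl(\sum_{e_i \in \ee{\ga}} \frac{\li \ri^2}{(\li+\ri)^2}\Bigr).
\]

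The next step is to invoke the hypothesis that $\ga$ is normalized, i.e., $\ell(\ga) = \sum_{e_i \in \ee{\ga}} \li = 1$. Substituting this into the previous display collapses the right-hand factor of $\ell(\ga)$ and yields exactly
\[
\Bigl(\sum_{e_i \in \ee{\ga}} \frac{\li \ri}{\li+\ri}\Bigr)^2 \;\leq\; \sum_{e_i \in \ee{\ga}} \frac{\li \ri^2}{(\li+\ri)^2},
\]
which is the desired inequality. One should note that bridges cause no trouble: if $\ri = \infty$, the corresponding summands on both sides reduce to $\li$ (the common limit as $\ri \to \infty$ of both $\frac{\li \ri}{\li+\ri}$ and $\frac{\li\ri^2}{(\li+\ri)^2}$), and the Cauchy--Schwarz step remains valid term-by-term.

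There is essentially no obstacle here; the only subtlety is spotting the right decomposition of the summands so that Cauchy--Schwarz applies cleanly. The algebraic identity $\li \cdot \bigl(\frac{\ri}{\li+\ri}\bigr)^2 = \frac{\li \ri^2}{(\li+\ri)^2}$ is the key pattern, and once it is written this way the result is immediate. The normalization hypothesis $\ell(\ga) = 1$ is used exactly once, to eliminate the factor $\sum_i \li$ on the right-hand side. By the scale-independence noted in \remref{rem tau scale-idependence}, this is no loss of generality for subsequent applications in the paper.
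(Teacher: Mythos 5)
Your proof is correct: the decomposition $a_i=\sqrt{\li}$, $b_i=\sqrt{\li}\,\frac{\ri}{\li+\ri}$ together with Cauchy--Schwarz and the normalization $\sum_i \li = 1$ gives the inequality exactly, and your handling of bridges via limits is sound. This is essentially the same weighted Cauchy--Schwarz (equivalently, Jensen) argument as the proof of the cited result in \cite{C2}, which the present paper quotes without reproving.
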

%
\begin{lemma}\cite[Lemma 2.12]{C2}\label{lem2term}
Let $\ga$ be a metrized graph and $p \in \vv{\ga}$. Then if $e_i \sim p$ indicates that edge $e_i$
is incident to vertex $p$
\begin{equation*}
\begin{split}
\sum_{e_i \in \, \ee{\ga}}\frac{\li(R_{a_{i},p}-R_{b_{i},p})^2}{(\li+\ri)^2}
= \frac{2}{v}\sum_{e_i \in \, \ee{\ga}}\frac{\li\ri^2}{(\li+\ri)^2}
+ \frac{1}{v}\sum_{p \in \, \vv{\ga}}\Bigg(\sum_{\substack{e_i \not\sim p\\ e_i \in \,
\ee{\ga}}}\frac{\li(R_{a_{i},p}-R_{b_{i},p})^2}{(\li+\ri)^2} \Bigg).
\end{split}
\end{equation*}
\end{lemma}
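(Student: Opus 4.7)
The plan is to reduce the identity to a double-counting calculation, after first establishing that the left-hand sum is independent of the chosen vertex $p$. Write $S(p):=\sum_{e_i\in\ee{\ga}}\frac{\li(R_{a_i,p}-R_{b_i,p})^2}{(\li+\ri)^2}$, which is the LHS of the claim for that choice of $p$.

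First I compute $(R_{a_i,p}-R_{b_i,p})^2$ when $p$ is an endpoint of $e_i$. If $p=\pp$, the reference-voltage convention forces $R_{a_i,\pp}=\hj{\pp}{\pp}{\qq}=0$, while $R_{b_i,\pp}=\hj{\qq}{\pp}{\pp}$ is exactly the resistance between $\pp$ and $\qq$ in $\ga-e_i$, namely $\ri$; the case $p=\qq$ is symmetric. Hence $(R_{a_i,p}-R_{b_i,p})^2=\ri^2$ whenever $p\sim e_i$. Summing over all incidences, and noting that self-loops have $\ri=0$ and contribute nothing on both sides, yields
\[
\sum_{p\in\vv{\ga}}\sum_{e_i\sim p}\frac{\li(R_{a_i,p}-R_{b_i,p})^2}{(\li+\ri)^2} \;=\; 2\sum_{e_i\in\ee{\ga}}\frac{\li\ri^2}{(\li+\ri)^2},
\]
since each non-loop edge appears once at each of its two distinct endpoints.

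Second, I show that $S(p)$ is independent of $p$. Substituting the definitions of $x(\ga)$ and $y(\ga)$ into \eqnref{eqn tau x and y} and simplifying gives
\[
\ta{\ga}=\frac{\elg}{12}-\frac{1}{6}\sum_{e_i}\frac{\li^2\ri}{(\li+\ri)^2}-\frac{1}{12}\sum_{e_i}\frac{\li\ri^2}{(\li+\ri)^2}+\frac{1}{4}S(p).
\]
Every term on the right other than $\tfrac14 S(p)$ is manifestly independent of the chosen vertex, and $\ta{\ga}$ is a graph invariant, so $S(p)$ must be the same for every $p\in\vv{\ga}$. Combining the two ingredients, $S(p)=\frac{1}{v}\sum_{q\in\vv{\ga}}S(q)$; splitting each $S(q)$ into its $e_i\sim q$ and $e_i\not\sim q$ parts and applying the incidence identity to the former produces exactly the statement of the lemma.

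The substantive obstacle is the $p$-invariance of $S(p)$: the incidence identity is mere bookkeeping, but the invariance is what turns an averaged statement into a per-vertex one. I am taking the shortcut of invoking $\ta{\ga}$-invariance; an alternative direct route is to write $R_{a_i,p}-R_{b_i,p}=\hat r_i(p,\pp)-\hat r_i(p,\qq)$ (via $\hj{z}{x}{y}=\frac{1}{2}(\hat r_i(x,z)+\hat r_i(y,z)-\hat r_i(x,y))$ for the resistance $\hat r_i$ on $\ga-e_i$) and argue the constancy of $S$ directly using harmonicity on each deleted-edge subgraph.
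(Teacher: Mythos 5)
The paper does not prove this lemma at all --- it is quoted verbatim from \cite[Lemma 2.12]{C2} --- so there is no internal proof to compare against; I can only assess your argument on its own terms, and it holds up. Your two reductions are both sound: the incidence computation is correct (for $p=\pp$ one has $R_{a_i,\pp}=\hj{\pp}{\pp}{\qq}=0$ and $R_{b_i,\pp}=\hj{\qq}{\pp}{\pp}=\hat r_i(\pp,\qq)=\ri$, symmetrically for $\qq$, and each non-loop edge is counted once per endpoint), and the algebra giving $\tg=\frac{\elg}{12}-\frac{1}{6}\sum_i\frac{\li^2\ri}{(\li+\ri)^2}-\frac{1}{12}\sum_i\frac{\li\ri^2}{(\li+\ri)^2}+\frac{1}{4}S(p)$ checks out, so the $p$-independence of $S(p)$ does follow from \eqnref{eqn tau x and y} holding for an arbitrary base vertex. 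You have also correctly isolated where the content lies: since $\sum_q I(q)=2\sum_i\frac{\li\ri^2}{(\li+\ri)^2}$ with $I(q)$ the incident part of $S(q)$, the lemma is literally equivalent to $S(p)=\frac{1}{v}\sum_q S(q)$, i.e.\ to constancy of $S$. Two caveats worth recording. First, your invariance argument imports \cite[Proposition 2.9]{C2}, which is a nontrivial result from the same source as the lemma itself; this is not circular here (and the numbering suggests it is not circular in \cite{C2} either), but it means your proof is conditional on that cited identity rather than self-contained --- the direct route you sketch via $R_{a_i,p}-R_{b_i,p}=\hat r_i(p,\pp)-\hat r_i(p,\qq)$ and harmonicity on $\ga-e_i$ is what an unconditional proof would need. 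Second, the lemma is stated for arbitrary metrized graphs, so bridges ($\ri=\infty$) occur; under the paper's limiting conventions both $\frac{\li(R_{a_i,p}-R_{b_i,p})^2}{(\li+\ri)^2}$ and $\frac{\li\ri^2}{(\li+\ri)^2}$ degenerate to $\li$ and everything goes through, but you should say so explicitly since your endpoint computation tacitly assumes $\ri$ finite.
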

We have the following relations between $x(\ga)$ and $y(\ga)$:
\begin{theorem}\label{thm edgecon11}
Let $\ga$ be a normalized bridgeless metrized graph with $\# (\vv{\ga})=v$, and let $x=x(\ga)$, $y=y(\ga)$.
Then we have
\begin{enumerate}
\item $\tg=\frac{1}{12}-\frac{x}{6}+\frac{y}{6},$
\item $1 \geq \frac{\Lambda(\ga)+v-1}{v-1} x+y,$ \, $x \geq 0$, \text{ and } $y \geq 0,$
\item $y \geq \frac{v+6}{4 v} (x+y)^2,$
\item $g \cdot y \geq x \geq (\Lambda(\ga)-1)y$.
\end{enumerate}
\end{theorem}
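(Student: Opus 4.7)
The plan is to prove the four statements in order, each leveraging identities and inequalities established earlier in the paper.

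For part (1), the claim is immediate from Equation (\ref{eqn tau x and y}), since $\ga$ is normalized, i.e. $\elg=1$.

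For part (2), the nonnegativity $x \geq 0$ and $y \geq 0$ follows directly from the definitions of $x(\ga)$ and $y(\ga)$ as sums of nonnegative terms. For the main inequality, I would start from the identity $x(\ga)+y(\ga)+z(\ga)=\elg$ of Equation (\ref{eqn sumof x and y}), which gives $1 = x+y+z$ in the normalized case. Then I would invoke Lemma \ref{lemcor z contr}, namely $z(\ga) \geq \frac{\Lambda(\ga)}{v-1} x(\ga)$, to substitute and rearrange: $1 - x - y = z \geq \frac{\Lambda(\ga)}{v-1}x$, yielding $1 \geq \frac{\Lambda(\ga)+v-1}{v-1}x+y$.

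For part (3), which I expect to be the main technical obstacle, I would start from the definition
\[
y(\ga)=\frac{1}{4}\sum_{e_i \in \ee{\ga}}\frac{\li \ri^2}{(\li+\ri)^2}+\frac{3}{4}\sum_{e_i \in \ee{\ga}}\frac{\li (R_{a_i,p}-R_{b_i,p})^2}{(\li+\ri)^2}.
\]
Applying Lemma \ref{lem2term} and discarding the nonnegative averaged term gives the pointwise lower bound
\[
\sum_{e_i}\frac{\li(R_{a_i,p}-R_{b_i,p})^2}{(\li+\ri)^2} \geq \frac{2}{v}\sum_{e_i}\frac{\li \ri^2}{(\li+\ri)^2},
\]
so
\[
y(\ga) \geq \frac{1}{4}\Big(1+\frac{6}{v}\Big)\sum_{e_i}\frac{\li \ri^2}{(\li+\ri)^2}=\frac{v+6}{4v}\sum_{e_i}\frac{\li \ri^2}{(\li+\ri)^2}.
\]
Then, using Theorem \ref{thm2term} for the normalized graph $\ga$ together with the identity $r(\ga)=x(\ga)+y(\ga)$ from Equation (\ref{eqn sumof x and y}), we bound $\sum_{e_i}\frac{\li \ri^2}{(\li+\ri)^2} \geq r(\ga)^2=(x+y)^2$. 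Combining, $y \geq \frac{v+6}{4v}(x+y)^2$.

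For part (4), I would exploit the successive contraction expression (\ref{eqn cont for x}) for $x(\ga)$ and its analogue for $y(\ga)$, which both express the invariant as the same positive linear combination (with weights $\prod \frac{R_{i_t}}{L_{i_t}+R_{i_t}}/(v-2)!$) of values on admissible contractions $\oga_{i_1,\dots,i_{v-2}}$. By Corollary \ref{cor edge connectivity}, for each such admissible contraction we have $g \cdot y(\oga_{i_1,\dots,i_{v-2}}) \geq x(\oga_{i_1,\dots,i_{v-2}}) \geq (\Lambda(\ga)-1) y(\oga_{i_1,\dots,i_{v-2}})$. Multiplying these inequalities by the nonnegative weights and summing preserves the inequalities, so $g \cdot y(\ga) \geq x(\ga) \geq (\Lambda(\ga)-1) y(\ga)$, completing the proof.
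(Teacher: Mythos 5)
Your proposal is correct and follows essentially the same route as the paper: part (1) from the normalization of Equation (\ref{eqn tau x and y}), part (2) from Lemma \ref{lemcor z contr} together with $x+y+z=1$, part (3) from Lemma \ref{lem2term} (dropping the averaged nonnegative term) combined with Theorem \ref{thm2term} and $x+y=r(\ga)$, and part (4) by pushing Corollary \ref{cor edge connectivity} through the successive contraction expansions of $x(\ga)$ and $y(\ga)$. One small inaccuracy: $x(\ga)$ is not literally a sum of nonnegative terms (its definition contains a subtracted sum), so for $x\geq 0$ you should instead note that $(R_{a_i,p}-R_{b_i,p})^2\leq (R_{a_i,p}+R_{b_i,p})^2=\ri^2$ makes the subtracted sum dominated by the middle one, or appeal to Equation (\ref{eqn cont for x}) and Proposition \ref{prop banana x and y}.
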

\begin{proof}
Since $\ga$ is normalized, $\ell(\ga)=1$. Thus, part $(1)$ follows from \eqnref{eqn tau x and y}.

Part $(2)$ follows from \lemref{lemcor z contr} and \eqnref{eqn sumof x and y}.

By \lemref{lem2term} and the definition of $y$, we have
\begin{equation}\label{eqn y lower bound}
\begin{split}
y \geq \frac{v+6}{4 v}\sum_{e_i \in \, \ee{\ga}}\frac{\li\ri^2}{(\li+\ri)^2}.
\end{split}
\end{equation}
Thus, part $(3)$ follows from \eqnref{eqn y lower bound} and \thmref{thm2term}.

We have  $g \cdot y(\oga_{i_1, \dots, i_{v-2}}) \geq x(\oga_{i_1, \dots, i_{v-2}}) \geq (\Lambda(\ga)-1) \cdot y(\oga_{i_1, \dots, i_{v-2}})$ by \corref{cor edge connectivity}. We inductively apply \thmref{thm contraction} to obtain
$$(v-2)! y(\ga) = \sum_{e_{i_1} \in \;
\ee{\ga}}\frac{R_{i_1}}{L_{i_1}+R_{i_1}} \sum_{\substack{e_{i_2} \in
\\ \ee{\oga_{i_1}}}}\frac{R_{i_2}}{L_{i_2}+R_{i_2}}
\; \dots
\sum_{\substack{e_{i_{v-2}} \in \\ \ee{\oga_{i_1, \dots, i_{v-3}}}}}
\frac{R_{i_{v-2}}}{L_{i_{v-2}}+R_{i_{v-2}}}y(\oga_{i_1,\dots, i_{v-2}}).$$
Thus, using \eqnref{eqn cont for x} we have part $(4)$.
\end{proof}
\begin{figure}
\centering
\includegraphics[scale=0.65]{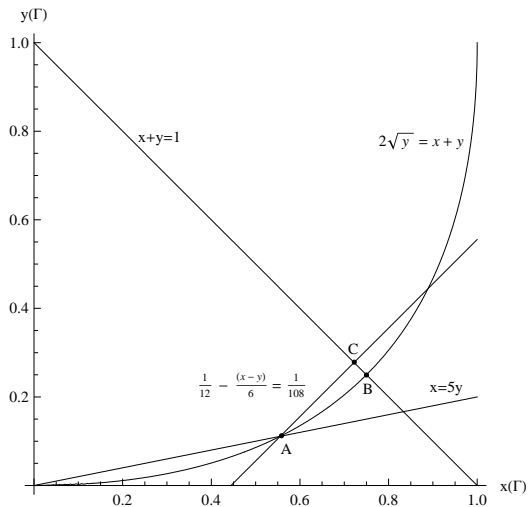} \caption{The lower bound for $\tg$ when $\Lambda(\ga)\geq 6$ and $v\rightarrow \infty$. $A=(\frac{5}{9},\frac{1}{9})$, $B=(\frac{3}{4},\frac{1}{4})$, $C=(\frac{13}{18},\frac{5}{18})$} \label{fig ParabolicLB2}
\end{figure}
Now, we can state the main result of this paper:
\begin{theorem}\label{thm edgecon2}
Let $\ga$ be a metrized graph with $v$ vertices. Then we have
\begin{enumerate}
\item $\tg \geq \elg \big(\frac{1}{12}(1-\frac{4}{\Lambda(\ga)})^2 +\frac{4(\Lambda(\ga)-2)}{(v+6)\Lambda(\ga)^2}\big), \quad \text{if $\Lambda(\ga) \geq 4$}$.
In particular, $\tg \geq \frac{\elg}{108}$ if $\Lambda(\ga) \geq 6$, and $\tg \geq \frac{\elg}{300}$ if $\Lambda(\ga) = 5$.
\item $ \tg \geq \frac{\elg}{2(v+6)}.$
In particular, $\tg \geq \frac{\elg}{108}$ if $v \leq 48$.
\end{enumerate}
\end{theorem}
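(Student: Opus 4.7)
The plan is to convert the conjecture into a short constrained optimization on the pair $(x(\ga),y(\ga))$ using the four relations of \thmref{thm edgecon11}, and then reduce the general case to the bridgeless case via the additive property of $\tau$. By the scale-independence of $\tau$ (\remref{rem tau scale-idependence}), we may normalize to $\elg=1$ throughout.

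For part $(1)$, the assumption $\Lambda(\ga)\geq 4$ forces $\ga$ to be bridgeless, so \thmref{thm edgecon11} applies. Writing $\tg=\tfrac{1}{12}-\tfrac{1}{6}(x-y)$, the task reduces to upper-bounding $x-y$. Set $s=x+y$. Combining $y\geq\tfrac{v+6}{4v}s^2$ with $x\geq(\Lambda-1)y$ gives $s\geq\Lambda y\geq\Lambda\cdot\tfrac{v+6}{4v}s^2$, hence $s\leq\tfrac{4v}{(v+6)\Lambda}$. Feeding $y\geq\tfrac{v+6}{4v}s^2$ back into $x-y=s-2y$ yields $x-y\leq f(s):=s-\tfrac{v+6}{2v}s^2$. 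The concave parabola $f$ is maximized at $s^{\ast}=\tfrac{v}{v+6}$, and since $\Lambda\geq 4$ the upper bound on $s$ lies in $[0,s^{\ast}]$, so $f$ attains its feasible maximum at the endpoint $s=\tfrac{4v}{(v+6)\Lambda}$. A direct substitution gives $x-y\leq\tfrac{4v(\Lambda-2)}{(v+6)\Lambda^2}$, hence $\tg\geq\tfrac{1}{12}-\tfrac{2v(\Lambda-2)}{3(v+6)\Lambda^2}$. The remaining step is the algebraic identity $\tfrac{1}{12}-\tfrac{2v(\Lambda-2)}{3(v+6)\Lambda^2}=\tfrac{1}{12}(1-\tfrac{4}{\Lambda})^2+\tfrac{4(\Lambda-2)}{(v+6)\Lambda^2}$; the specializations for $\Lambda=5,6$ then follow by dropping the non-negative second summand.

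For part $(2)$ we simply discard constraint $(4)$ of \thmref{thm edgecon11}. When $\ga$ is bridgeless, the unconstrained maximum of $f(s)=s-\tfrac{v+6}{2v}s^2$ over $s\geq 0$ is $\tfrac{v}{2(v+6)}$ (attained at $s=s^{\ast}$), which immediately gives $\tg\geq\tfrac{1}{2(v+6)}$. For arbitrary $\ga$, contract the set $B=\{e_{j_1},\dots,e_{j_k}\}$ of bridges to obtain a bridgeless graph $\gamma$ with $v_\gamma\leq v$ and $\ell(\gamma)=\elg-\sum_{B}L_j$; \eqnref{eqncontract1} then reads $\tg=\ta{\gamma}+\tfrac{1}{4}\sum_{B}L_j$. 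The bridgeless bound applied to $\gamma$ yields $\ta{\gamma}\geq\ell(\gamma)/(2(v_\gamma+6))\geq\ell(\gamma)/(2(v+6))$, and trivially $\tfrac{1}{4}\geq\tfrac{1}{2(v+6)}$; summing gives $\tg\geq\elg/(2(v+6))$. Degenerate reductions (trees, or $\gamma$ collapsing to a bouquet of self-loops at one vertex) are handled directly via \lemref{lem tau for tree and circle} and the additive property \remref{remcutvertex1}.

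The only subtle point in the plan is deciding when the boundary versus interior optimum of $f(s)$ binds, which is precisely the step where the hypothesis $\Lambda\geq 4$ enters in part $(1)$ and where it is dropped in part $(2)$. Everything else is substitution into inequalities already furnished by \thmref{thm edgecon11}, together with the mechanical algebraic identity rewriting the bound in part $(1)$; one checks both sides against the common denominator $12(v+6)\Lambda^2$ and verifies $(v+6)(\Lambda-4)^2+48(\Lambda-2)=(v+6)\Lambda^2-8v(\Lambda-2)$.
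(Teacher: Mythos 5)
Your proposal is correct and takes essentially the same route as the paper: both treat the statement as a constrained optimization over $(x(\ga),y(\ga))$ using the inequalities of \thmref{thm edgecon11}, locate the optimum at the intersection of the line $x=(\Lambda(\ga)-1)y$ with the parabola $y=\tfrac{v+6}{4v}(x+y)^2$ for part $(1)$ (and at the tangency point for part $(2)$), and reduce to the normalized bridgeless case. Your substitution $s=x+y$ merely carries out explicitly the ``elementary calculus'' the paper asserts, and your checked algebraic identity and the bridge-contraction argument in part $(2)$ supply details the paper leaves to the reader.
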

\begin{proof}
If an edge $e_i \in \ee{\ga}$ is a bridge of length $\li$, then it contributes to $\tg$ by $\frac{\li}{4}$ (see \cite[Corollaries 2.22 and 2.23]{C2} for more information). Therefore, we can assume that $\ga$ is bridgeless. On the other hand, by using the scale-indepence of the tau constant (see \remref{rem tau scale-idependence}), we can assume that $\ga$ is normalized.

Now, we look for $x$ and $y$ values that satisfy the inequalities in parts $(2)$, $(3)$, and $(4)$ of
\thmref{thm edgecon11} and minimize $\frac{1}{12}-\frac{x}{6}+\frac{y}{6}$.

Whenever $\Lambda(\ga) \geq 4$, by elementary calculus, we see that
the line $x=(\Lambda(\ga)-1)y$ and the parabola $y=\frac{v+6}{4 v} (x+y)^2$
intersect at the point with coordinates $x =
\frac{4 v (\Lambda(\ga)-1)}{(v+6)\Lambda(\ga)^2}$ and $y =
\frac{4 v}{(v+6) \Lambda(\ga)^2}$, and that these give a lower bound to $\frac{1}{12}-\frac{x}{6}+\frac{y}{6}$.
This proves the first inequality in part $(1)$.

Again by elementary calculus, we see that the line $\frac{1}{12}-\frac{x}{6}+\frac{y}{6}=c$ is tangential to the parabola $y=\frac{v+6}{4 v} (x+y)^2$ at the point with coordinates $x=\frac{3 v}{4 (v+6)}$ and $y=\frac{v}{4 (v+6)}$, and that these give a lower bound to $\frac{1}{12}-\frac{x}{6}+\frac{y}{6}$.
This proves the first inequality in part $(2)$.

The remaining parts are immediate from what we have shown.
\end{proof}
\begin{theorem}\label{thmcor tau equal length}
Let $\ga$ be a normalized bridgeless metrized graph. If all the edge lengths are equal to each other, then we have
$$\frac{1}{12}-\frac{v-1}{6e}+ \frac{v-1}{3 e \Lambda(\ga)} \geq \tg \geq
\frac{1}{12}-\frac{v-1}{6e}+\frac{v+6}{12v}\big(\frac{v-1}{e}\big)^2.$$
In particular, if $\ga$ is an $n$-regular metrized graph and $\Lambda(\ga)=n$, we have
$$ \frac{1}{12}-\frac{(v-1)(n-2)}{3vn^2}\geq \tg \geq
\frac{1}{12}\big( \frac{g}{e}\big)^2+\frac{1}{2 v}\big(\frac{v-1}{e}\big)^2
.$$
\end{theorem}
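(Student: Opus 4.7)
My plan is to reduce everything to the single variable $y=y(\ga)$ by first computing $x(\ga)+y(\ga)=r(\ga)$ explicitly in the equal-length case, then applying parts (3) and (4) of \thmref{thm edgecon11} to bound $y$ from below and above.

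\textbf{Step 1 (Compute $r(\ga)$).} Since $\ga$ is normalized with $e$ edges of equal length, each edge has length $\li=\tfrac{1}{e}$. By the identity $\sum_{i=1}^{e}\tfrac{\ri}{\li+\ri}=v-1$ from \eqnref{eqn genus},
\[
r(\ga)=\sum_{i=1}^{e}\frac{\li\ri}{\li+\ri}=\frac{1}{e}\sum_{i=1}^{e}\frac{\ri}{\li+\ri}=\frac{v-1}{e}.
\]
Together with \eqnref{eqn sumof x and y} this gives $x(\ga)+y(\ga)=\tfrac{v-1}{e}$.

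\textbf{Step 2 (Rewrite $\tg$ in terms of $y$).} Using part (1) of \thmref{thm edgecon11},
\[
\tg=\frac{1}{12}-\frac{x(\ga)-y(\ga)}{6}=\frac{1}{12}-\frac{r(\ga)-2y(\ga)}{6}=\frac{1}{12}-\frac{v-1}{6e}+\frac{y(\ga)}{3}.
\]
Thus the problem reduces to bounding $y(\ga)$ above and below.

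\textbf{Step 3 (The two bounds on $y$).} For the lower bound on $\tg$, apply part (3) of \thmref{thm edgecon11}:
\[
y(\ga)\geq\frac{v+6}{4v}\bigl(x(\ga)+y(\ga)\bigr)^2=\frac{v+6}{4v}\Bigl(\frac{v-1}{e}\Bigr)^2.
\]
For the upper bound on $\tg$, apply part (4) of \thmref{thm edgecon11}: $x(\ga)\geq(\Lambda(\ga)-1)y(\ga)$. Combined with $x(\ga)+y(\ga)=\tfrac{v-1}{e}$ this yields $\Lambda(\ga)\,y(\ga)\leq\tfrac{v-1}{e}$, so
\[
y(\ga)\leq\frac{v-1}{e\,\Lambda(\ga)}.
\]
Substituting these into the expression from Step 2 proves the first pair of inequalities.

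\textbf{Step 4 ($n$-regular specialization).} When $\ga$ is $n$-regular with $\Lambda(\ga)=n$, we have $2e=nv$, so $\tfrac{v-1}{e}=\tfrac{2(v-1)}{nv}$. Substituting into the upper bound gives
\[
\frac{1}{12}-\frac{v-1}{6e}+\frac{v-1}{3en}=\frac{1}{12}-\frac{(v-1)(n-2)}{3vn^{2}}
\]
after a routine collection of terms. For the lower bound, using $g=e-v+1$ one has $\tfrac{g}{e}=1-\tfrac{v-1}{e}$, so $\tfrac{1}{12}(g/e)^{2}=\tfrac{1}{12}-\tfrac{v-1}{6e}+\tfrac{1}{12}((v-1)/e)^{2}$; hence
\[
\frac{1}{12}-\frac{v-1}{6e}+\frac{v+6}{12v}\Bigl(\frac{v-1}{e}\Bigr)^{2}=\frac{1}{12}\Bigl(\frac{g}{e}\Bigr)^{2}+\frac{1}{12}\Bigl(\frac{v-1}{e}\Bigr)^{2}\Bigl(\frac{v+6}{v}-1\Bigr)=\frac{1}{12}\Bigl(\frac{g}{e}\Bigr)^{2}+\frac{1}{2v}\Bigl(\frac{v-1}{e}\Bigr)^{2}.
\]
This is merely algebraic simplification, and there is no real obstacle: once $x+y$ is pinned down exactly in Step 1, the two sides of the desired inequality follow immediately from the already-established bounds in \thmref{thm edgecon11}. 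The only mild subtlety is recognizing that the equal-length hypothesis is precisely what turns the sum $\sum\tfrac{\li\ri}{\li+\ri}$ into $L$ times $\sum\tfrac{\ri}{\li+\ri}=v-1$, allowing the reduction to a one-parameter problem.
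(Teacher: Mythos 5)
Your proposal is correct and follows essentially the same route as the paper: it pins down $x(\ga)+y(\ga)=\tfrac{v-1}{e}$ via \eqnref{eqn genus}, rewrites $\tg=\tfrac{1}{12}-\tfrac{v-1}{6e}+\tfrac{y}{3}$, and then reads off the two bounds on $y$ from parts (3) and (4) of \thmref{thm edgecon11}, finishing with the same $e=\tfrac{nv}{2}$ substitution and algebra for the $n$-regular case. Your explicit verification of the algebraic identity for the lower bound (via $\tfrac{g}{e}=1-\tfrac{v-1}{e}$) is a welcome addition the paper leaves implicit, but the argument is the same.
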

\begin{proof}
Since $\li=\frac{1}{e}$ for each edge, $x(\ga)+y(\ga)=\frac{v-1}{e}$ by \eqnref{eqn genus} and \eqnref{eqn sumof x and y}.
Therefore, parts $(3)$ and $(4)$ of \thmref{thm edgecon11} are equivalent to $\frac{v-1}{\Lambda(\ga) e} \geq y \geq \frac{v+6}{4 v}(\frac{v-1}{e})^2$, and \eqnref{eqn tau x and y} is equivalent to $\tg=\frac{1}{12}-\frac{v-1}{6 e}+\frac{y}{3}$. These give the first two inequalities. The final two inequalities follow from the fact that $e=\frac{n v}{2}$ when $\ga$ is $n$-regular.
\end{proof}


\section{Cubic graphs}\label{section cubic graphs}

In this section, we will show that Conjecture ~\ref{TauBound} holds
for all metrized graphs if it holds for cubic metrized graphs. We call a
$3$-regular metrized graph a ``cubic metrized graph'' or ``cubic graph'' for short.
We will consider the metrized graphs with $\kappa(\ga) \geq 2$
where $\kappa(\ga)$ is the vertex connectivity. By
\remref{remcutvertex1}, this would be enough to prove Conjecture
~\ref{TauBound}.

We will use the following notation and graph
constructions.

Suppose $\ga$ is a normalized metrized graph, i.e., $\ell(\ga)=1$, and $p \in
\vv{\ga}$ is a vertex with valence $n\geq 4$. We want to transform
$\ga$ into another normalized metrized graph, $\ga_{p,(n-3)}^N$, by adding
new edges and new vertices of valence $3$ to $\ga$ in such a way
that the valence of the vertex $p$ becomes $3$ in $\ga_{p,(n-3)}^N$.
In $\ga_{p,(n-3)}^N$, we add $n-3$ new vertices $p^1, p^2, \dots,
p^{n-3}$ and $n-3$ new edges $e_{p,1}, e_{p,2}, \dots, e_{p,(n-3)}$
with pairs of end points $\{p^1,p^2\}, \{p^2,p^3\}, \dots, \{p^{n-3},p\}$, respectively.
Figure \ref{fig totalres2a} shows the details of the transformation.
\begin{figure}
\centerline{\epsffile{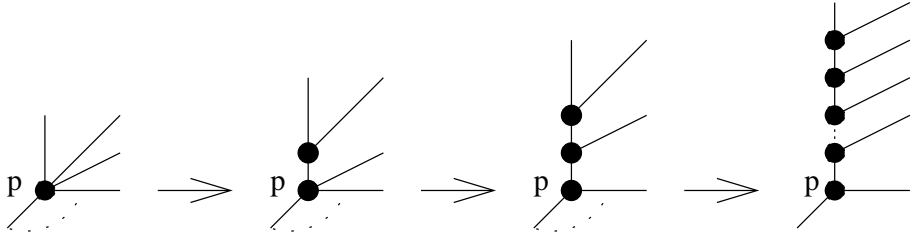}} \caption{Transforming a non-cubic graph $\ga$ to a cubic graph.} \label{fig totalres2a}
\end{figure}
The first graph in Figure \ref{fig totalres2a} shows $\ga$.

Suppose the edges with end point $p$ are given in a specified order. We disconnect the
first and the second edges from $p$. Then we reconnect them to $p$
via adding edge $e_{p,1}$, with end points $\{p^1,p\}$ and of length
$\varepsilon_{p,1}$, so that the new vertex $p^1$ becomes the end point
of the first edge, the second edge and the new edge $e_{p,1}$. We denote
this graph by $\ga_{p,1}$. Note that
$\ell(\ga_{p,1})=\ell(\ga)+\varepsilon_{p,1}=1+\varepsilon_{p,1}$
and if we contract the new edge $e_{p,1}$, we obtain $\ga$. Also,
the valence of $p$ in $\ga_{p,1}$ is $n-1$. Then we obtain
$\ga_{p,1}^N$ by normalizing $\ga_{p,1}$. $\ga_{p,1}^N$ is the
second graph in Figure \ref{fig totalres2a}. Note that the graphs $\ga_{p,1}$
and $\ga_{p,1}^N$ have the same shape, i.e. the same topology. At
the next step, we disconnect $e_{p,1}$ and the third edge with
vertex $p$ from $p$, then we reconnect them via adding the edge
$e_{p,2}$, with end points $\{p^2,p\}$ and of length
$\varepsilon_{p,2}$, so that the new vertex $p^2$ becomes the end point
of third edge, $e_{p,1}$ and $e_{p,2}$. We denote this graph by
$\ga_{p,2}$. Note that the valence of $p$ in $\ga_{p,2}$ is $n-2$.
Then by normalizing $\ga_{p,2}$ we obtain $\ga_{p,2}^N$ which is
shown by the third graph in Figure \ref{fig totalres2a}. We continue this process
until the valence of $p$ becomes $3$, i.e., until we obtain the
graphs $\ga_{p,(n-3)}$ and $\ga_{p,(n-3)}^N$.

Note that $\varepsilon_{p,k} >0$ for each $k=1, 2, \dots, n-3$.
Since $\kappa(\ga) \geq 2$, $\ga_{p,k}-{e_{p,k}}$ is connected for
each $k=1, 2, \dots, n-3$. Let $\ga_{p,0}^N:=\ga$.
\begin{lemma}\label{lemconcubic}
Let $k \in \{0,1,\dots, n-4\}$ and let $\ga_{p,k+1}^N$,
$\ga_{p,k}^N$, $p$ and $\varepsilon_{p,k+1}$ be as above. Then
\begin{equation*}
\ta{\ga_{p,k+1}^N} \leq \ta{\ga_{p,k}^N} +
\frac{\varepsilon_{p,k+1}}{1+\varepsilon_{p,k+1}} \cdot
(\frac{1}{12}-\ta{\ga_{p,k}^N}).
\end{equation*}
\end{lemma}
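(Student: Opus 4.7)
The plan is to combine Lemma~\ref{lemcontract1} with the scale-invariance of $\tg$. The geometric key is that $\ga_{p,k+1}$ is obtained from $\ga_{p,k}^N$ by introducing a new vertex $p^{k+1}$, rewiring two edges incident to $p$ so that they meet at $p^{k+1}$ instead of at $p$, and attaching the new edge $e_{p,k+1}$ of length $\varepsilon_{p,k+1}$ between $p^{k+1}$ and $p$. Consequently, contracting $e_{p,k+1}$ in $\ga_{p,k+1}$ recovers $\ga_{p,k}^N$, and $\ga_{p,k+1}-e_{p,k+1}$ is connected by the $\kappa(\ga)\geq 2$ hypothesis noted just before the lemma.

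First, since $\ell(\ga_{p,k}^N)=1$, one has $\ell(\ga_{p,k+1})=1+\varepsilon_{p,k+1}$, so scale-invariance (Remark~\ref{rem tau scale-idependence}) gives
$$\ta{\ga_{p,k+1}^N}\;=\;\frac{\ta{\ga_{p,k+1}}}{1+\varepsilon_{p,k+1}}.$$
Next, applying Lemma~\ref{lemcontract1} to $\ga_{p,k+1}$ with respect to the edge $e_{p,k+1}$ (with endpoints $p$ and $p^{k+1}$, whose contraction is $\ga_{p,k}^N$) yields
$$\ta{\ga_{p,k+1}}\;=\;\ta{\ga_{p,k}^N}+\frac{\varepsilon_{p,k+1}}{12}-\frac{\varepsilon_{p,k+1}\,A_{p,p^{k+1},\,\ga_{p,k+1}-e_{p,k+1}}}{R(\varepsilon_{p,k+1}+R)},$$
where $R$ denotes the resistance between $p$ and $p^{k+1}$ in $\ga_{p,k+1}-e_{p,k+1}$.

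Using the non-negativity $A_{p,q,\ga}\geq 0$ (from the bound recalled just after Theorem~\ref{thmbasic}), I drop the last term to obtain the intermediate inequality $\ta{\ga_{p,k+1}}\leq \ta{\ga_{p,k}^N}+\varepsilon_{p,k+1}/12$. Dividing through by $1+\varepsilon_{p,k+1}$ and regrouping gives
$$\ta{\ga_{p,k+1}^N}\;\leq\;\frac{\ta{\ga_{p,k}^N}}{1+\varepsilon_{p,k+1}}+\frac{\varepsilon_{p,k+1}}{12(1+\varepsilon_{p,k+1})}\;=\;\ta{\ga_{p,k}^N}+\frac{\varepsilon_{p,k+1}}{1+\varepsilon_{p,k+1}}\Bigl(\frac{1}{12}-\ta{\ga_{p,k}^N}\Bigr),$$
which is exactly the claimed bound. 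There is no real obstacle: once the identification $\overline{(\ga_{p,k+1})}_{e_{p,k+1}}=\ga_{p,k}^N$ and the connectedness of $\ga_{p,k+1}-e_{p,k+1}$ are observed, the inequality is a direct consequence of Lemma~\ref{lemcontract1}, the non-negativity of $A_{\pp,\qq,\ga}$, and the scale-invariance of $\tg$.
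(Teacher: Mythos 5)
Your proposal is correct and follows essentially the same route as the paper: apply Lemma~\ref{lemcontract1} to $\ga_{p,k+1}$ at the new edge $e_{p,k+1}$ (whose contraction recovers $\ga_{p,k}^N$), use scale-independence to pass to $\ga_{p,k+1}^N$, and discard the nonnegative term involving $A_{p,p^{k+1},\ga_{p,k+1}-e_{p,k+1}}$. The only difference is that you drop that term before dividing by $1+\varepsilon_{p,k+1}$ rather than after, which is an equivalent rearrangement of the same algebra.
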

\begin{proof}
Let $e_{p,k+1}$, $p^k$, $p^{k+1}$, $\ga$, $\ga_{p,k}^N$,
$\ga_{p,k+1}$, $\ga_{p,k+1}^N$, $\varepsilon_{p,k+1}$ be as above.

Note that we can obtain $\ga_{p,k}^N$ from $\ga_{p,k+1}$ by
contracting the edge $e_{p,k+1}$ to its end points. Since
$\ga_{p,k}-{e_{p,k}}$ is connected, we can apply
\lemref{lemcontract1}. This gives
\begin{equation}\label{eqn concubic1}
\begin{split}
\ta{\ga_{p,k+1}}=\ta{\ga_{p,k}^N} + \frac{\varepsilon_{p,k+1}}{12}
-\frac{ \varepsilon_{p,k+1} A_k
}{\bar{R}_{k+1}(\varepsilon_{p,k+1}+\bar{R}_{k+1})},
\end{split}
\end{equation}
where $A_k:=A_{p^k,p^{k+1},\ga_{p, k+1}-{e_{p,k+1}}}$ and
$\bar{R}_{k+1}$ is the resistance, in $\ga_{p, k+1}-{e_{p,k+1}}$,
between $p^k$ and $p^{k+1}$.

Since $\ell(\ga_{p,k+1})=1+\varepsilon_{p,k+1}$,
\begin{equation}\label{eqn concubic2}
\begin{split}
\ta{\ga_{p,k+1}} =(1+\varepsilon_{p,k+1}) \cdot \ta{\ga_{p,k+1}^N}.
\end{split}
\end{equation}

Substituting \eqnref{eqn concubic2} into  \eqnref{eqn concubic1}
gives
\begin{equation}\label{eqn concubic3}
\begin{split}
\ta{\ga_{p,k+1}^N} &=\frac{\ta{\ga_{p,k}^N}}{1+\varepsilon_{p,k+1}}
+ \frac{\varepsilon_{p,k+1}}{1+\varepsilon_{p,k+1}} \cdot
(\frac{1}{12}-\frac{A_k}{\bar{R}_{k+1}(\varepsilon_{p,k+1}+\bar{R}_{k+1})})
\\ & = \ta{\ga_{p,k}^N}
+ \frac{\varepsilon_{p,k+1}}{1+\varepsilon_{p,k+1}} \cdot
(\frac{1}{12}-\frac{A_k}{\bar{R}_{k+1}(\varepsilon_{p,k+1}+\bar{R}_{k+1})}-
\ta{\ga_{p,k}^N})
\\ & \leq \ta{\ga_{p,k}^N}
+ \frac{\varepsilon_{p,k+1}}{1+\varepsilon_{p,k+1}} \cdot
(\frac{1}{12}- \ta{\ga_{p,k}^N}),
\end{split}
\end{equation}
since $A_k \geq 0$, \text{$\bar{R}_{k+1} > 0$ and
$\varepsilon_{p,k+1} > 0$}. This proves the result.
\end{proof}

\begin{theorem}\label{thmconjforcubics}
If there exists a positive constant $C$ such that
$ \ta{\beta}\geq C $ for any normalized cubic graph $\beta$, then
$ \ta{\ga} \geq C$ for any normalized graph $\ga$.
\end{theorem}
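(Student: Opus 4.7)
The plan is to transform the normalized graph $\ga$ into a normalized cubic graph $\beta$ by applying the edge-insertion construction described just before \lemref{lemconcubic}, and to use that lemma to control the change in the tau constant at each step. First I would reduce to the case $\kappa(\ga)\geq 2$: if $\ga$ has a cut vertex $p$, write $\ga=\ga_1\cup\ga_2$ with $\ga_1\cap\ga_2=\{p\}$; the additive property (\remref{remcutvertex1}) and scale-independence (\remref{rem tau scale-idependence}) give $\ta{\ga}=\ta{\ga_1}+\ta{\ga_2}$ and $\ell(\ga)=\ell(\ga_1)+\ell(\ga_2)$, and an induction on the number of cut vertices reduces the claim to the case where each component is either an isolated edge (for which $\ta/\ell=1/4$, trivially at least $C$, since the hypothesis applied to any normalized cubic graph already forces $C\leq 1/4$) or $2$-vertex-connected. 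So I may assume $\kappa(\ga)\geq 2$ from now on.

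Next, I would loop over each vertex $p\in\vv{\ga}$ of valence $n\geq 4$ and apply the transformation $\ga_{p,0}^N\mapsto\ga_{p,1}^N\mapsto\cdots\mapsto\ga_{p,n-3}^N$ with each inserted edge $e_{p,k}$ of small positive length $\varepsilon_{p,k}$. By construction every newly created vertex has valence $3$ and the target vertex $p$ ends with valence $3$, so iterating over the finitely many originally non-cubic vertices produces a normalized cubic graph $\beta$. Since adding edges never decreases vertex connectivity, $\kappa\geq 2$ is maintained throughout, so \lemref{lemconcubic} applies at every step.

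The core estimate is the one-step bound
\[
\ta{\ga_{p,k}^N} \;\geq\; \ta{\ga_{p,k+1}^N} - \tfrac{\varepsilon_{p,k+1}}{12},
\]
extracted from \lemref{lemconcubic} by a case split: if $\ta{\ga_{p,k}^N}\leq 1/12$, the correction $\frac{\varepsilon_{p,k+1}}{1+\varepsilon_{p,k+1}}\bigl(\tfrac{1}{12}-\ta{\ga_{p,k}^N}\bigr)$ is nonnegative and bounded above by $\varepsilon_{p,k+1}/12$; if instead $\ta{\ga_{p,k}^N}>1/12$, that correction is negative, so the lemma already gives $\ta{\ga_{p,k+1}^N}\leq\ta{\ga_{p,k}^N}$. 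Chaining these inequalities along the entire transformation $\ga\mapsto\beta$ and invoking the hypothesis on $\beta$ yields
\[
\ta{\ga} \;\geq\; \ta{\beta} - \tfrac{1}{12}\sum\varepsilon_{p,k} \;\geq\; C - \tfrac{1}{12}\sum\varepsilon_{p,k}.
\]
Since the $\varepsilon_{p,k}$ are free positive parameters chosen independently of $\ga$, letting them tend to $0$ produces $\ta{\ga}\geq C$.

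The main thing to verify is the uniform per-step bound above (handling both signs of $\tfrac{1}{12}-\ta{\ga_{p,k}^N}$) and the persistence of the connectivity hypothesis $\kappa\geq 2$ under edge insertions; neither step is deep but both are essential for the telescoping estimate to close.
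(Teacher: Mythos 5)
Your proposal is correct and follows essentially the same route as the paper: reduce to $\kappa(\ga)\geq 2$ via the additive property, convert $\ga$ to a normalized cubic graph by the vertex-splitting construction, control each step with \lemref{lemconcubic}, and let the inserted lengths tend to $0$. The only (harmless) difference is bookkeeping: you bound each increment by $\varepsilon_{p,k+1}/12$ uniformly, whereas the paper chooses $\varepsilon_{p,k+1}$ adaptively as $\varepsilon_0/(\tfrac{1}{12}-\ta{\ga_{p,k}^N})$ so that each step contributes at most $\varepsilon_0$; your version is slightly cleaner since it avoids the degenerate case $\ta{\ga_{p,k}^N}=\tfrac{1}{12}$ and the appeal to the bound $\tau\leq\ell/12$.
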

\begin{proof}
Let $\ga$ be an arbitrary normalized metrized graph. By the additive property of the tau constant
(\remref{remcutvertex1}) we can assume that $\ga$ has no cut vertices. If $\ga$ is a loop,
then $\tg=\frac{1}{12}$. Thus, we can assume that $\ga$ has a vertices with valence at least $3$.
After removing all vertices of valence $2$ from $\vv{\ga}$, we can assume that all vertices have
valence at least $3$. Suppose $\ga$ is not a cubic graph. Then by basic graph theory
%
%
$e> \frac{3}{2}v$, where $e=\#(\ee{\ga})$ and $v=\#(\vv{\ga})$. Let
$\varepsilon_0 :=\frac{\varepsilon}{2e-3v}$, for some arbitrary $\varepsilon > 0$.

Since $\ga$ is not cubic, there exists a vertex $p \in \vv{\ga}$
with $\va(p) \geq 4$. We construct the graphs $\ga_{p,k+1}$ and
$\ga_{p,k+1}^N$ for each $k=0,1, \dots \va(p)-4$ as mentioned at the beginning of this section.
In these constructions,  for each $k$ we take
\[
\varepsilon_{p,k+1}= \begin{cases}
\frac{\varepsilon_0}{\frac{1}{12}-\ta{\ga_{p,k}^N}}, & \text{if
$\frac{1}{12} \not =\ta{\ga_{p,k}^N} $ } \\
\text{a positive number}, \quad & \text{otherwise}.
\end{cases}
\]
Note that $\frac{1}{12} \geq \ta{\ga_{p,k}^N}$ by
\cite[Corollary 5.8]{C2}.
Then in both cases we have
\begin{equation}\label{eqn concubic4}
\begin{split}
\ta{\ga_{p,k+1}^N} \leq \ta{\ga_{p,k}^N} + \varepsilon_0 .
\end{split}
\end{equation}
By considering \eqnref{eqn concubic4} for each $k=0,1, \dots
\va(p)-4$, we obtain
\begin{equation*}\label{eqn concubic5}
\begin{split}
\ta{\ga_{p,\va(p)-3}^N} \leq \ta{\ga} + (\va(p)-3) \cdot
\varepsilon_0.
\end{split}
\end{equation*}

By following the same procedure for each $p \in \vv{\ga}$ with
$\va(p) \geq 4$, we obtain a normalized cubic graph $\beta$ such
that
\begin{equation*}\label{eqn concubic6}
\begin{split}
\ta{\beta} & \leq \ta{\ga} + \sum_{p \in \vv{\ga}}(\va(p)-3) \cdot
\varepsilon_0
= \ta{\ga} + (2e -3v) \cdot \varepsilon_0
= \ta{\ga} + \varepsilon.
\end{split}
\end{equation*}
Thus $\tg \geq C-\varepsilon$. Since $\varepsilon >0$ is arbitrary, $\tg \geq C$.
\end{proof}
\begin{remark}\label{remcubic}
\thmref{thmconjforcubics} shows that to prove Conjecture ~\ref{TauBound}, it is enough to establish it for cubic graphs.
\end{remark}
\begin{theorem}\label{thm edge connectivity 2 to 3}
Let $\ga$ be a metrized graph with $\Lambda(\ga)=2$. Then there exists a metrized graph $\beta$ such that $\tg=\ta{\beta}$, $\Lambda(\beta)\geq 3$, $\#(\ee{\ga}) \geq \#(\ee{\beta})$, and $g(\ga) = g(\beta)$.
\end{theorem}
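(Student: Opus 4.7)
My plan is to reduce to edge-connectivity at least $3$ by a local surgery at a 2-edge cut, iterating if necessary on $\#\ee{\ga}$. Since $\Lambda(\ga)=2$, fix a 2-edge cut $\{e_1,e_2\}\subset\ee{\ga}$, so that $\ga-\{e_1,e_2\}$ has exactly two components $\ga_1,\ga_2$; write $e_i=\{p_i,q_i\}$ with $p_i\in\ga_1$, $q_i\in\ga_2$, and $L_i=\ell(e_i)$. If two endpoints coincide on the same side, say $p_1=p_2$, then this vertex is a cut-vertex of $\ga$; the additive property (\remref{remcutvertex1}) gives $\tg=\ta{\ga_1}+\ta{\ga_2\cup e_1\cup e_2}$, and since each summand has strictly fewer edges than $\ga$, an induction produces replacements whose cut-vertex gluing yields $\beta$ (gluings at a single point do not create new 2-edge cuts). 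So I may assume $p_1,p_2,q_1,q_2$ are pairwise distinct.

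In this main sub-case I propose constructing $\beta$ in two steps: contract the edge $e_1$, identifying $p_1$ and $q_1$, and then re-parametrize the length of $e_2$ so as to cancel the change in $\tau$ produced by the contraction. By \lemref{lemcontract1},
\begin{equation*}
\ta{\oga_1}=\tg+\frac{L_1}{12}-\frac{L_1\, A_{p_1,q_1,\ga-e_1}}{R_1(L_1+R_1)},
\end{equation*}
and by \lemref{lemcor2twopunion}, changing the length of $e_2$ produces an explicit, controllable change in $\tau$. The decisive feature of a 2-edge cut is that in $\ga-e_1$ every path from $p_1$ to $q_1$ passes through $e_2$, so one gets a series formula $R_1=L_2+r_{\ga_1}(p_1,p_2)+r_{\ga_2}(q_1,q_2)$, and the voltage function $\jj{x}{p_1}{q_1}$ on $\ga-e_1$ has a simple piecewise description whose restriction to $e_2$ is affine. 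This yields a closed form for $A_{p_1,q_1,\ga-e_1}$ that depends linearly on $L_2$, so the length adjustment on $e_2$ needed to make the two corrections cancel can be solved explicitly, giving $\ta{\beta}=\tg$.

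By construction $\#\ee{\beta}=\#\ee{\ga}-1$ and $\#\vv{\beta}=\#\vv{\ga}-1$, so $g(\beta)=g(\ga)$ and $\#\ee{\beta}<\#\ee{\ga}$. For the edge connectivity: after contracting $e_1$, the original cut $\{e_1,e_2\}$ is destroyed because $\ga_1$ and $\ga_2$ now meet at the contracted vertex in addition to being joined by $e_2$, so no pair of edges separates them; a minimality choice of the original 2-cut rules out any lift of a residual 2-cut in $\beta$ back to a 2-cut of $\ga$. If $\Lambda(\beta)$ nevertheless remains $2$ through an unrelated cut, apply the procedure again; termination is forced by the strict decrease in $\#\ee{}$. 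The main obstacle is the tau-preservation step, which requires an explicit evaluation of $A_{p_1,q_1,\ga-e_1}$ from the series structure of the 2-cut and a careful matching of the correction in \lemref{lemcontract1} against the correction in \lemref{lemcor2twopunion}. A secondary check is that the adjusted length on $e_2$ remains positive, which should follow from the general bound $0\le A_{p,q,\ga}\le r(p,q)(r_\ga(p)-r(p,q)/2)$ recalled after the definition of $A_{p,q,\ga}$.
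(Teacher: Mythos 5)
Your main-case construction (contract $e_1$, then retune the length of $e_2$ so that the two corrections cancel) is genuinely different from the paper's, and the step you yourself flag as ``the main obstacle'' is a real gap rather than a routine verification. By \lemref{lemcoradding1}, reattaching $e_2$ to $\oga_1-e_2$ with length $L$ changes $\tau$ by $f(L)=\frac{L}{12}+\frac{A'}{L+\rho}$ plus a constant, where $A'=A_{p_2,q_2,\oga_1-e_2}$ and $\rho$ is the relevant resistance; you need $f(L_2')=f(L_2)-\Delta$ for some $L_2'>0$, where $\Delta=L_1\big(\frac{A_{p_1,q_1,\ga-e_1}}{R_1(L_1+R_1)}-\frac{1}{12}\big)$ is the change of $\tau$ under contraction of $e_1$ (\lemref{lemcontract1}). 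When $\Delta\le 0$ this is solvable since $f(L)\to\infty$, but $\Delta$ can be positive: for two circles joined by two short edges, with the attachment points far apart on one circle, the series decomposition gives $A_{p_1,q_1,\ga-e_1}\approx R_1^2/6>R_1^2/12$, so contraction increases $\tau$. In that regime you need $f(L_2)-\Delta\ge\inf_{L>0}f(L)=2\sqrt{A'/12}-\rho/12$, and in the two-circle example the required decrease and the available decrease coincide in the degenerate limit, so solvability is at best borderline; it certainly does not follow from the bound $0\le A_{p,q,\ga}\le r(p,q)\big(r_\ga(p)-r(p,q)/2\big)$ as you suggest. Without an identity or inequality guaranteeing a positive solution $L_2'$, the construction is not established. (Your reduction of the coincident-endpoint case to additivity and your termination argument are fine modulo details.)

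The paper avoids this entirely by a different surgery with an exact, coefficient-level cancellation: it picks $e_i$ with $\Lambda(\ga-e_i)=1$, contracts all bridges of $\ga-e_i$ (of total length $S$), and lengthens $e_i$ by $S$. Then $\ta{\ga-e_i}$ drops by $S/4$ (additivity plus \lemref{lem tau for tree and circle}), $\li$ increases by $S$, $\ri$ decreases by $S$, while $\li+\ri$ and $A_{\pp,\qq,\ga-e_i}$ are unchanged, so \lemref{lemcor2twopunion} gives $\ta{\gamma}-\tg=S\big(-\frac14+\frac1{12}+\frac16\big)=0$ with nothing to solve for. This surgery also preserves $\ell(\ga)$, which is what makes \remref{rem edge connectivity 2} usable for the lower bound conjecture; your construction does not control the total length. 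To salvage your route you would have to prove the solvability inequality above; otherwise switch to the paper's surgery on the partner edges rather than on a cut edge itself.
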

\begin{proof}
Since $\Lambda(\ga)=2$, there is an edge $e_i \in \ee{\ga}$ such that $\Lambda(\ga-e_i)=1$, and let $\li$ be the length of $e_i$.
Let $C(e_i)=\{e_{i_1}, e_{i_2}, \cdots, e_{i_s} \}$ be the set of bridges in $\ga-e_i$, and let $L_{i_j}$ be the edge length of
$e_{i_j}$ for each $1 \leq j \leq s$. Let $\gamma$ be the metrized graph obtained from $\ga$ by contracting all of the edges in $C(e_i)$ to their end points, and by extending the length $\li$ of the edge $e_i$ to $\li+\sum_{j=1}^{s}L_{i_j}$.
We have $\elg=\ell(\gamma)$, and $\ta{\ga-e_i}=\ta{\gamma-e_i}+\frac{1}{4}\sum_{j=1}^{s}L_{i_j}$ by additive property of the tau constant (see \remref{remcutvertex1}), $R_i(\ga)=R_i(\gamma)+\sum_{j=1}^{s}L_{i_j}$ by elementary circuit reductions, and $L_i(\gamma)=\li+\sum_{j=1}^{s}L_{i_j}$ by our construction. Moreover, $A_{\pp,\qq,\ga-e_i}=A_{\pp,\qq,\gamma-e_i}$ by the additive property of $A_{p,q,\ga}$ (see \cite[Proposition 4.6]{C2}) and by \cite[Proposition 4.5]{C2}. By our construction,
$\#(\ee{\ga}) \geq \#(\ee{\gamma})$, and $g(\ga) = g(\gamma)$. If we apply \lemref{lemcor2twopunion} to
$\tg$ and $\ta{\gamma}$ and use the equalities we derived, we see that $\tg=\ta{\gamma}$.

Note that $\Lambda(\gamma-e_i) \geq 2$. If $\Lambda(\gamma)=2$, we apply the same process to $\gamma$. We can repeat this process
until we obtain a graph $\beta$ with the properties we wanted. \figref{fig edgeconnectivitytwo9} shows an example in which this process applied four times.
\end{proof}
\begin{figure}
\centering
\includegraphics[scale=1.4]{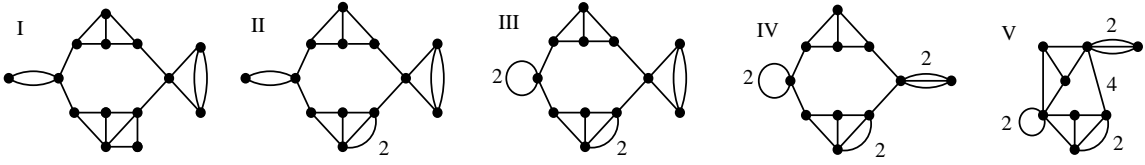}\caption{All of these graphs have equal tau constant. The last graph has edge connectivity $3$, and the others have edge connectivity $2$. The length of the extended edges are shown, the other edges have length $1$.} \label{fig edgeconnectivitytwo9}
\end{figure}
\begin{remark}\label{rem edge connectivity 2}
One of the implications of \thmref{thm edge connectivity 2 to 3} is that
if Conjecture ~\ref{TauBound} holds for metrized graphs with edge connectivity at least $3$, then it holds for
all metrized graphs.
\end{remark}

We show in \cite{C3} that $\tg$ can be computed by using the discrete Laplacian
of $\ga$ and its pseudo inverse.
In \cite{C6}, we construct families of metrized graphs with the tau constants between $\frac{\elg}{107}$ and $\frac{\elg}{108}$, and the computations suggest that we can have sequences of metrized graphs with the tau constants approaching (but not equal) to $\frac{\elg}{108}$.

Based on our theoretical and computational investigations, we refine Conjecture ~\ref{TauBound} as follows:
\begin{conjecture}\label{conj our}
For all metrized graphs $\Gamma$,
$\tau(\Gamma) > \frac{\ell(\Gamma)}{108}$.
\end{conjecture}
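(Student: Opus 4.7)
I plan to combine the two reductions of \secref{section cubic graphs} with the linear programming bound of \thmref{thm edgecon11}. First, using the additive property (\remref{remcutvertex1}) and scale invariance (\remref{rem tau scale-idependence}), it suffices to consider normalized bridgeless graphs with $\kappa(\ga) \geq 2$. \remref{rem edge connectivity 2} (via \thmref{thm edge connectivity 2 to 3}) eliminates the case $\Lambda(\ga) = 2$ by replacing $\ga$ with a tau-equivalent graph of higher edge connectivity, while \thmref{thmconjforcubics} eliminates high-valence vertices by passing to cubic graphs. Since a cubic metrized graph has $\Lambda \leq \delta = 3$, after both reductions the only case to treat is a normalized cubic graph with $\Lambda(\ga) = 3$.

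For this case, the plan is to sharpen the proof of \thmref{thm edgecon2} by invoking the full force of \thmref{thm edgecon11}. When $\Lambda(\ga) = 3$, the intersection of $x = 2y$ and the parabola $y = \tfrac{v+6}{4v}(x+y)^2$ used in the proof of \thmref{thm edgecon2} falls outside the region cut out by the linear constraint $\tfrac{v+2}{v-1} x + y \leq 1$ of part $(2)$, as soon as $v \geq 9$. The binding constraints at the optimum instead become $x = 2y$ and $\tfrac{v+2}{v-1} x + y = 1$, yielding $y = (v-1)/(3(v+1))$ and hence
\[
\tg \;\geq\; \frac{1}{12} - \frac{v-1}{18(v+1)} \;=\; \frac{v+5}{36(v+1)} \;>\; \frac{1}{36}.
\]
For $v \leq 8$ the parabolic constraint remains binding and gives even larger values. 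Since $\tfrac{1}{36} > \tfrac{1}{108}$, this would establish the conjecture with a comfortable margin of roughly $2/108$.

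The principal obstacle is that the two reductions interact: \thmref{thmconjforcubics} can lower $\Lambda$ from $3$ to $2$ (when splitting a vertex of valence $\geq 4$ creates a new $2$-edge cut), while \thmref{thm edge connectivity 2 to 3} can destroy cubicness (when contracting a bridge whose two endpoints have valence $3$ produces a valence-$4$ vertex). The plan is to interleave the two operations and prove termination via a potential-function argument, for example with
\[
\Phi(\ga) \;=\; 2 \cdot \#\{p \in \vv{\ga} : \va(p) \geq 4\} \;+\; \#\{\text{minimal $2$-edge cuts of } \ga\},
\]
showing that $\Phi$ strictly decreases under each combined step. A secondary point is that \thmref{thmconjforcubics} only preserves $\tg$ up to an arbitrarily small $\epsilon$; this is harmless for the uniform bound $1/36$, since letting $\epsilon \to 0$ preserves the strict inequality $\tg > 1/108$.

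As a back-up, should the termination argument prove difficult, one can attack cubic graphs with $\Lambda = 2$ directly by block-decomposing them along their $2$-edge cuts and using the contraction-deletion identities \eqnref{eqn contraction-deletion identities} to express $\tg$ as a sum of tau constants of the blocks plus a non-negative correction, reducing inductively either to smaller cubic graphs or to cubic graphs with $\Lambda \geq 3$, which are handled by the argument above.
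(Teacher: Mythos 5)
This statement is stated in the paper as a \emph{conjecture}; the paper offers no proof of it (Theorem~\ref{thm edgecon2} settles only the cases $\Lambda(\ga)\geq 5$, and \S\ref{section cubic graphs} gives reductions, not a proof). So your proposal must stand on its own, and unfortunately its central step is incorrect.

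The fatal error is in the linear-programming step for $\Lambda(\ga)=3$. Minimizing $\tg=\frac{1}{12}-\frac{x}{6}+\frac{y}{6}$ means \emph{maximizing} $x-y$ over the feasible region of Theorem~\ref{thm edgecon11}. The constraint $x\geq(\Lambda(\ga)-1)y$ from part (4) is a \emph{lower} bound on $x$; it can only be binding at the maximizer of $x-y$ when it cuts off the unconstrained tangency point $x=\frac{3v}{4(v+6)}$, $y=\frac{v}{4(v+6)}$ (where $x=3y$), which happens precisely when $\Lambda(\ga)-1>3$. For $\Lambda(\ga)=3$ the condition $x\geq 2y$ is strictly satisfied at that tangency point, and one checks directly that the tangency point also satisfies $\frac{v+2}{v-1}x+y=\frac{v(4v+5)}{4(v+6)(v-1)}<1$ and $x\leq gy$ (since $g=\frac{v}{2}+1\geq 3$ for cubic graphs). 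Hence the tangency point lies in the feasible region, $\max(x-y)\geq\frac{v}{2(v+6)}$, and the best bound these constraints can yield for $\Lambda(\ga)=3$ is $\tg\geq\frac{1}{2(v+6)}$, which tends to $0$ as $v\to\infty$. Your computation that the intersection of $x=2y$ with the parabola violates the linear constraint for $v\geq 9$ is correct, but the conclusion you draw from it is backwards: that infeasible point is irrelevant to the minimum of $\tg$, and the point $x=2y$, $\frac{v+2}{v-1}x+y=1$ you propose is not the minimizer --- the feasible region contains points with $\tg$ far below your claimed $\frac{1}{36}$. This is exactly why the paper's method stalls at $\Lambda(\ga)\geq 5$ and why the cubic case (necessarily $\Lambda(\ga)\leq\delta(\ga)=3$) is left open. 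The secondary issues you flag (the interaction between Theorems~\ref{thmconjforcubics} and~\ref{thm edge connectivity 2 to 3}, and the $\epsilon$-loss in the cubic reduction) are real but moot, since the core bound they would feed into does not exist.
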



\end{document}